\newcommand*{\eqcomment}[2][\qquad]{#1 &&\left(\text{#2}\right)}
\newcommand*{\be}{\begin{equation}}
\newcommand*{\ee}{\end{equation}}
\newcommand*{\de}[1]{\,\mathrm{d}#1}
\theoremstyle{definition}
\newtheorem{example}{Example}[section]
\newtheorem{definition}[example]{Definition}
\newtheorem{remark}[example]{Remark}
\theoremstyle{plain}
\newtheorem{theorem}[example]{Theorem}
\newtheorem{proposition}[example]{Proposition}
\newtheorem{notation}[example]{Notation}
\newtheorem{lemma}[example]{Lemma}
\newtheorem{corollary}[example]{Corollary}
\numberwithin{equation}{section}
\newcommand*{\R}{\mathbb{R}}
\newcommand*{\Z}{\mathbb{Z}}
\newcommand*{\N}{\mathbb{N}}
\newcommand*{\Q}{\mathbb{Q}}
\newcommand*{\eps}{\varepsilon}
\newcommand*{\PP}{\mathcal{P}}
\newcommand*{\ignore}[1]{}
\newcommand{\x}{\mathrm{x}}
\newcommand{\vecx}{\mathrm{x}}
\newcommand{\suchthat}{\bm{:}}
\newcommand{\smooth}[1]{#1^{\text{smooth}}}
\newcommand{\Pf}{\mathrm{Pfaff}}
\newcommand{\conncomp}{\mathcal{C}}
\newcommand{\Pfaff}{\mathcal{X}}
\title{Partitioning Theorems for Sets of Semi-Pfaffian Sets, with Applications}
\author{Martin Lotz}
\address{(Lotz) Warwick Mathematics Institute, University of Warwick, UK}
\email{martin.lotz@warwick.ac.uk}
\author{Abhiram Natarajan}
\address{(Natarajan) Warwick Mathematics Institute, University of Warwick, UK}
\email{abhiram.natarajan@warwick.ac.uk}
\author{Nicolai Vorobjov}
\address{(Vorobjov) University of Bath, UK and St. Petersburg Department of V. A. Steklov Institute of Mathematics of the Russian Academy of Sciences (PDMI RAS), Russia}
\email{masnnv@bath.ac.uk}
\date{}
\begin{document}
\begin{abstract}
We generalize the seminal polynomial partitioning theorems of Guth and Katz \cites{guth2015erdHos, guth2015polynomial} to a set of semi-Pfaffian sets. Specifically, given a set $\Gamma \subseteq \R^n$ of $k$-dimensional semi-Pfaffian sets, where each $\gamma \in \Gamma$ is defined by a fixed number of Pfaffian functions, and each Pfaffian function is in turn defined with respect to a Pfaffian chain $\vec{q}$ of length $r$, for any $D \ge 1$, we prove the existence of a polynomial $P \in \R[X_1, \ldots, X_n]$ of degree at most $D$ such that each connected component of $\R^n \setminus Z(P)$ intersects at most $\sim \frac{|\Gamma|}{D^{n - k - r}}$ elements of $\Gamma$. Also, under some mild conditions on $\vec{q}$, for any $D \ge 1$, we prove the existence of a Pfaffian function $P'$ of degree at most $D$ defined with respect to $\vec{q}$, such that each connected component of $\R^n \setminus Z(P')$ intersects at most $\sim \frac{|\Gamma|}{D^{n-k}}$ elements of $\Gamma$. To do so, given a $k$-dimensional semi-Pfaffian set $\Pfaff \subseteq \R^n$, and a polynomial $P \in \R[X_1, \ldots, X_n]$ of degree at most $D$, we establish a uniform bound on the number of connected components of $\R^n \setminus Z(P)$ that $\Pfaff$ intersects; that is, we prove that the number of connected components of $(\R^n \setminus Z(P)) \cap \Pfaff$ is at most $\sim D^{k+r}$. Finally, as applications, we derive Pfaffian versions of Szemer\'edi-Trotter-type theorems, and also prove bounds on the number of joints between Pfaffian curves.
\end{abstract}

\maketitle

\tableofcontents 

\section{Introduction}

Incidence combinatorics has recently experienced a surge of activity, largely fuelled by the introduction of powerful algebraic techniques (see, for example, \cites{guth2016polynomial,sheffer2022polynomial}). While traditionally restricted to simple geometric objects such as points and lines, the focus in incidence geometry has shifted towards questions involving higher-dimensional algebraic or semi-algebraic sets. Semi-algebraic sets possess convenient \emph{tame topological} properties, such as triangulability and stratifiability, which play an important role in incidence questions. As suggested in \cite{grothendieck1997esquisse}, if one were to look for other collections of subsets of $\R^n$ that also possess the tame topological properties of semi-algebraic sets, one is naturally led to \emph{o-minimal geometry}, an axiomatic generalization of semi-algebraic geometry. 

Having its genesis in model theory, the notion of o-minimality isolates key axioms such that any collection of subsets of $\R^n$ satisfying these axioms -- such a collection is called an \emph{o-minimal structure}, and elements of an o-minimal structure are called \emph{definable sets} -- share the tame topological properties that semi-algebraic sets possess. We refer the reader to \cite{markermodeltheory} and \cite{van1998tame} for background in model theory and o-minimal geometry. There is a growing body of work investigating incidence problems where the involved sets are definable over arbitrary o-minimal structures or distal structures over $\R$ \cites{basu2018minimal,chernikov2020cutting,chernikovregularity2020,chernikov2021ramsey,basit2021Zar,anderson2023combinatorial,balsera2023incidences,chernikov2024modelduke};  see \cite[\S 6]{scanlon} for an overview. However, compared to the algebraic and semi-algebraic settings, progress in o-minimal incidence combinatorics has been slow. 

A particularly useful algebraic tool in incidence geometry is the celebrated \emph{polynomial partitioning theorem}, first introduced in ground-breaking work by Guth and Katz \cite{guth2015erdHos} to resolve the distinct distances problem and the joints problem. This method was later generalized in \cite{guth2015polynomial} and \cite{blagojevic2017polynomial}. In the statement of this theorem, and in what follows, we denote by $Z(P_1, \dots, P_s)$ the set of common zeros in $\R^n$ of functions $P_1,\dots,P_s$.

\begin{theorem}[Polynomial partitioning \cites{guth2015erdHos,guth2015polynomial}]
\label{thm:guth-polynomial-partitioning}
Let $\Gamma$ be a finite set of $k$-dimensional real algebraic sets in $\R^n$, where each $\Pfaff \in \Gamma$ is defined by at most $m$ polynomials, each of degree at most $\beta$. For any integer $D \ge 1$, there exists a non-zero polynomial $P \in \R[X_1, \ldots, X_n]$ of degree at most $D$ and a constant $C = C(n, m, \beta)$, such that each connected component of $\R^n \setminus Z(P)$ intersects at most $CD^{k - n}|\Gamma|$ elements of $\Gamma$.
\end{theorem}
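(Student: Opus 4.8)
The plan is to adapt the Guth--Katz iterated polynomial ham-sandwich argument, using two classical inputs in place of the purely combinatorial facts available for points: a Hilbert-function estimate, and the Milnor--Thom-type bound on how many cells a bounded-complexity $k$-dimensional set can visit (the latter being exactly the uniform bound the paper establishes, specialised to $r=0$).

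Write $N=|\Gamma|$ and fix $D$. If $D$ is bounded by a constant depending on $n,m,\beta$ the assertion is trivial after enlarging $C$ (each cell meets $\le N$ elements), so assume $D$ large. First dispose of small $N$. Each $\Pfaff\in\Gamma$ is a $k$-dimensional variety cut out by $\le m$ polynomials of degree $\le\beta$, so its degree and Castelnuovo--Mumford regularity are bounded in terms of $n,m,\beta$; hence for $D$ large the number of linear conditions on the $\binom{n+D}{n}\sim D^n/n!$ coefficients of a degree-$\le D$ polynomial that force it to vanish on $\Pfaff$ equals the value of the Hilbert polynomial of $\Pfaff$ at $D$, which is $\le c(n,m,\beta)\,D^k$. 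Summing over $\Gamma$, if $N\le c'(n,m,\beta)\,D^{n-k}$ there is a nonzero $P$ of degree $\le D$ with $\Gamma\subseteq Z(P)$; then no element of $\Gamma$ meets any connected component of $\R^n\setminus Z(P)$, and $0\le CD^{k-n}N$. So from now on $N>c'(n,m,\beta)\,D^{n-k}$.

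In the main case I would build $P=P_1\cdots P_s$ by iterating the following step, mirroring Guth--Katz while tracking which cells are ``heavy''. Suppose after round $t$ we have $Q_t:=P_1\cdots P_t$ of degree $\le C_0\,2^{t/(n-k)}$ such that every connected component of $\R^n\setminus Z(Q_t)$ meets $\le N2^{-t}$ elements of $\Gamma$ (round $0$: $Q_0=1$). Call a component \emph{heavy} if it meets more than $N2^{-(t+1)}$ elements. Since each $\Pfaff\in\Gamma$ meets $\lesssim(\deg Q_t)^k$ components of $\R^n\setminus Z(Q_t)$ --- the uniform bound with $r=0$ --- the total number of (element, component) incidences is $\lesssim(\deg Q_t)^kN$, so the number of heavy components is $\lesssim(\deg Q_t)^k\,2^{t+1}\lesssim 2^{tn/(n-k)}$. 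Now choose, via the Borsuk--Ulam theorem applied to the space of polynomials of degree $\le\delta_{t+1}\sim 2^{(t+1)/(n-k)}$ (its unit sphere has dimension $\binom{n+\delta_{t+1}}{n}-1\sim\delta_{t+1}^n$, which exceeds the number of heavy components), a single nonzero $P_{t+1}$ of degree $\le\delta_{t+1}$ that simultaneously ``bisects the $\Gamma$-content'' of every heavy component; making this notion meaningful for $k$-dimensional rather than point-like objects is the delicate point and again uses that a degree-$\delta$ polynomial can be forced to vanish on such a set at the cost of $\sim\delta^k$ conditions, together with the uniform bound to stop a surviving element from re-spreading into too many new components. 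Setting $Q_{t+1}=Q_tP_{t+1}$, every component of $\R^n\setminus Z(Q_{t+1})$ lies inside a component of $\R^n\setminus Z(Q_t)$ and, if the latter was heavy, on one side of $P_{t+1}$, so every component of $\R^n\setminus Z(Q_{t+1})$ meets $\le N2^{-(t+1)}$ elements of $\Gamma$, while $\deg Q_{t+1}\le C_0\bigl(2^{t/(n-k)}+2^{(t+1)/(n-k)}\bigr)\le C_0\,2^{(t+1)/(n-k)}$ for $C_0$ large. Run the iteration until $\deg Q_s$ first reaches $D$; then $2^s\sim D^{n-k}$, so with $P:=Q_s$ each component of $\R^n\setminus Z(P)$ meets $\le N2^{-s}\sim CD^{k-n}N$ elements of $\Gamma$, and by the Milnor--Thom bound there are only $\lesssim D^n$ such components.

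The main obstacle is exactly the bisection step: for points ``bisect'' is unambiguous and Borsuk--Ulam applies directly to counting measures, whereas a $k$-dimensional set straddles any hypersurface, so one must identify the right notion of ``halving the $\Gamma$-content of a heavy component'' --- absorbing part of the collection into $Z(P_{t+1})$ and splitting the rest --- and show, using the Hilbert-function count for the absorption and the uniform connected-components bound for the splitting, that a single polynomial of degree only $O(2^{t/(n-k)})$ achieves this for all $\sim 2^{tn/(n-k)}$ heavy components at once. Everything else --- the Veronese/Borsuk--Ulam mechanism, the Milnor--Thom estimate on the number of cells, and the degree bookkeeping --- is routine, which is precisely why the paper's technical core is the uniform bound on the number of connected components of $(\R^n\setminus Z(P))\cap\Pfaff$.
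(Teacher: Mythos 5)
Your proposal takes a genuinely different route from the paper, and the route has a gap that you yourself flag.

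The paper does not iterate. Following \cite{blagojevic2017polynomial} (see Appendix~\ref{appendix:poly-part-bbz}), it proves the more general Theorem~\ref{thm:poly-part-bbz} by a one-shot topological argument: fix $s$ with $2^s\sim D^{n-k}$, consider tuples $(P_1,\dots,P_s)$ drawn from a product of spheres $Y=\prod_\ell S(V_\ell)$ in spaces of polynomials of carefully chosen degrees $d_\ell$, set $P=P_1\cdots P_s$, and look at the $2^s$ sign conditions $\mathcal{O}_\sigma=\bigcap_i C_{P_i}^{\sigma_i}$. For each $\sigma$ one counts how many $\Pfaff\in\Gamma$ meet $\mathcal{O}_\sigma$; after approximating the indicator functions by continuous ones, these counts assemble into a map $\Phi:Y\to\bigoplus_i U_s$ with a zero (by a topological degree/cohomological argument from \cite{blagojevic2017polynomial}, not literal Borsuk--Ulam). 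A zero of $\Phi$ equidistributes the intersection counts over the $2^s$ sign conditions, and the uniform bound (Theorem~\ref{thm:bezout-pfaffian}, or Theorem~\ref{thm:barone-basu-ST} when $r=0$) on how many components a single $\Pfaff$ can meet then converts total count into the $CD^{k-n}|\Gamma|$ bound per cell. Crucially, nothing is ever ``halved'': a variety is simply counted once for each sign condition it touches, and the equidistribution is over these multiplicities, so the fact that a $k$-dimensional set straddles every hypersurface never causes trouble.

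Your plan, by contrast, iterates a genuine bisection, and the bisection step is exactly where it breaks. For a finite point set, ``bisect the points in cell $O$ by $Z(P_{t+1})$'' is an odd function of $P_{t+1}$ on a sphere of polynomials, so ham-sandwich applies. For a $k$-dimensional $\Pfaff$, both $O\cap\{P_{t+1}>0\}$ and $O\cap\{P_{t+1}<0\}$ typically meet $\Pfaff$, so the count ``number of elements of $\Gamma$ meeting $O\cap\{P_{t+1}>0\}$'' is not odd in $P_{t+1}$ and Borsuk--Ulam does not produce a bisector. Your proposed patch --- absorb some of the elements into $Z(P_{t+1})$ and split the rest --- is the right intuition, but the dimension counting does not close as stated: vanishing on a single $k$-dimensional $\Pfaff$ already costs $\sim\delta_{t+1}^k$ conditions, the space of polynomials of degree $\le\delta_{t+1}$ has only $\sim\delta_{t+1}^n$ coefficients, and you would need to absorb on the order of $N2^{-t}$ elements \emph{in each} of $\sim 2^{tn/(n-k)}$ heavy cells simultaneously. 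You explicitly identify this as ``the delicate point'' without resolving it, and indeed it is precisely the obstruction that led Guth and then Blagojevi\'c--Blakley--Ziegler to abandon iterated bisection in favour of the one-shot equidistribution over all sign conditions, which is what the paper uses. The rest of your sketch --- the Hilbert-function count for the small-$N$ case, the use of the uniform connected-components bound, the degree bookkeeping, and the final count of cells via Theorem~\ref{thm:optm} --- is sound and matches the ingredients the paper uses, but the central step is missing.
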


Theorem \ref{thm:guth-polynomial-partitioning} can be interpreted as follows. It is well known (see Theorem \ref{thm:optm}) that given a polynomial $P \in \R[X_1, \ldots, X_n]$ of degree at most $D$, $\R^n \setminus Z(P)$ can have at most $\mathcal{O}_n(D^n)$ connected components. It is also known (see Theorem \ref{thm:barone-basu-ST}) that out of these $\mathcal{O}_n(D^n)$ connected components, any $k$-dimensional real algebraic set $\Pfaff \subseteq \R^n$ can intersect at most $\mathcal{O}_{n,\Pfaff}(D^k)$ of them.\footnote{The subscript $\Pfaff$ in the big-O notation is shorthand to denote that we are considering various parameters that determine the \emph{complexity} of $\Pfaff$ to be fixed; our theorem statements are more precise.} The polynomial partitioning theorem says that given \emph{any} finite set $\Gamma$ of $k$-dimensional real algebraic sets, and given any $D \ge 1$, \emph{there exists} a polynomial $P$ of degree at most $D$ such that the intersections of algebraic sets with connected components are equidistributed. More specifically, since each $\Pfaff \in \Gamma$ can intersect at most $\mathcal{O}_{n, \Pfaff}(D^k)$ connected components of $\R^n \setminus Z(P)$, the total number of intersections between elements of $\Gamma$ and connected components of $\R^n \setminus Z(P)$ is at most $\mathcal{O}_{n,\Pfaff}(|\Gamma|D^k)$; Theorem \ref{thm:guth-polynomial-partitioning} promises that there exists a polynomial $P$ that ensures that these intersections are equidistributed among the $\mathcal{O}_n(D^n)$ connected components of $\R^n \setminus Z(P)$, i.e., there are at most $\mathcal{O}_{n,\Pfaff}(|\Gamma|D^{k-n})$ elements of $\Gamma$ intersecting each connected component of $\R^n \setminus Z(P)$. While not stated this way, Theorem \ref{thm:guth-polynomial-partitioning} holds even if $\Gamma$ is a finite set of semi-algebraic sets, since any semi-algebraic set is contained in a real algebraic set of the same dimension.

The polynomial partitioning theorem gives us a divide and conquer technique; it allows to break a problem instance into smaller sub-instances, solve each sub-instance, and then put together the local solutions to get the global answer. 
The polynomial partitioning technique has been a panacea for a large number of problems in discrete geometry \cites{guth2015erdHos,kaplan2012simple,sheffer2021distinct,tidor2022joints}, computational geometry/data structures \cites{afshani2023lower,agarwal2021efficient,doi:10.1137/19M1257548}, harmonic analysis \cites{guth2016restriction,guth2018restriction}, and others.

A long-standing open problem that has stymied progress on incidence questions involving sets definable over arbitrary o-minimal expansions of $\R$ is the unavailability of a polynomial partitioning type result for definable sets; see comments in \cites{basu2018minimal,chernikov2020cutting,anderson2023combinatorial,balsera2023incidences,chernikov2024modelduke}. Needless to say, a polynomial partitioning theorem that works in the o-minimal setting would enable tremendous progress in o-minimal incidence combinatorics and would also provide greatly simplified proofs of existing results. 

\begin{remark}[Difficulty in obtaining an o-minimal polynomial partitioning theorem] \label{rem:o-minimal-poly-part-difficult}
To prove a polynomial partitioning theorem in the o-minimal setting, a key step is to obtain a result of the type of Theorem \ref{thm:barone-basu-ST} in the case where $\Pfaff \subseteq \R^n$ is a $k$-dimensional definable set in any o-minimal structure. Specifically, given a polynomial $P \in \R[X_1, \ldots, X_n]$ of degree at most $D$, we would like an upper bound on the number of connected components of $\R^n \setminus Z(P)$ that $\Pfaff$ intersects. However, a result of \cite{basu2019zeroes} suggests that a uniform bound might not always be possible for definable sets in any arbitrary o-minimal structure. Specifically, \cite{basu2019zeroes} show that for every $0 \le k \le n-2$, and every sequence of natural numbers $(a_d)_{d \in \N}$, there is a regular compact and definable hypersurface $\Pfaff \subseteq \R \mathbb{P}^n$, a subsequence $(a_{d_m})_{m \in \N}$, and a sequence of homogeneous polynomials $(P_m)_{m \in \N}$ of degree $\mathrm{deg}(P_m) = d_m$, such that \[b_k(\Pfaff \cap Z(P_m)) \ge a_{d_m},\] where $b_k$ is the $k^{\text{th}}$ singular Betti number. This is a generalization of a result in \cite{gwozdziewicz1999number}. Moreover, in their construction, which constructs a semianalytic $\Pfaff$, $Z(P_m)$ intersects $\Pfaff$ transversely, thus making the topology of the intersection robust to small perturbations. See \cite[Section 3.4]{abhiramthesis} for a discussion of this issue.
\end{remark}

\subsection{Results: Partitioning Theorems}
In this article, we make progress on the problem of obtaining an o-minimal polynomial partitioning theorem by proving partitioning theorems for the specific case of sets defined by Pfaffian functions (see Section \ref{sec:pfaffian} for precise definitions). Introduced by Khovanski{\u\i} \cites{khovanskii1980class, khovanskiiicm} for the study of \emph{fewnomials} \cite{khovanskiui1991fewnomials} and the second part of \emph{Hilbert's sixteenth problem} \cite{ilyashenko}, Pfaffian functions satisfy triangular systems of first-order partial differential equations with polynomial coefficients. Pfaffian functions encompass a wide class of functions that contains, in particular, all Liouvillian functions (provided any sine or cosine functions involved are appropriately restricted). Pfaffian functions arise in many different mathematical areas. In model theory, they played an important role in Wilkie's Theorem~\cite{wilkie1996model}.

It is known that the \emph{Pfaffian structure}, i.e., the smallest collection of sets containing all semi-Pfaffian sets and that is stable under all structure operations, is o-minimal. In fact, it is also known that Pfaffian functions over any o-minimal expansion of the real field form an o-minimial structure \cite{speissegger1999pfaffian}. See \cite{karpinski1997polynomial} for an example in machine learning.

We generalize Theorem \ref{thm:guth-polynomial-partitioning} by proving Theorem \ref{thm:poly-part-bbz} which is a polynomial partitioning theorem for a set of semi-Pfaffian sets. Under a mild restriction on the Pfaffian chain, i.e. by restricting ourselves to \emph{algebraically independent} Pfaffian chains (see Definition \ref{defn:algebraically-independent-pfaffian-chain}), we also prove Theorem \ref{thm:pfaff-part-bbz} which is a partitioning theorem where, instead of using a polynomial to partition the given set of semi-Pfaffian sets, we use another Pfaffian function to partition; henceforth, we shall refer to this as \emph{Pfaffian partitioning}. Below, in Theorem \ref{thm:poly-part-guth}, we only state instantiations of Theorem \ref{thm:poly-part-bbz} and Theorem \ref{thm:pfaff-part-bbz} because it becomes easier to contrast with Theorem \ref{thm:guth-polynomial-partitioning}.

\begin{notation}[Pfaffian functions]
    We will denote by $\Pf_n(\alpha,\beta,r)$ the set of Pfaffian functions in $n$ variables with chain-degree $\alpha$, degree $\beta$, and order at most $r$. Also, if $\vec{q}$ is a Pfaffian chain of order $r$ and chain-degree $\alpha$, then $\Pf_{n}(\beta, \vec{q}) \subseteq \Pf_n(\alpha, \beta, r)$ will denote the set of all Pfaffian functions in $\Pf_n(\alpha,\beta,r)$ defined with respect to (w.r.t.) $\vec{q}$. See Section~\ref{sec:pfaffian} for the definition of Pfaffian functions and related concepts, including the notion of an algebraically independent Pfaffian chain.
\end{notation}

\begin{theorem}[Polynomial and Pfaffian partitioning of semi-Pfaffian sets - instantiations of Theorem \ref{thm:poly-part-bbz} and Theorem \ref{thm:pfaff-part-bbz}]
\label{thm:poly-part-guth}
Let $\Gamma$ be a collection of semi-Pfaffian sets in $\R^n$ of dimension $k$, where each $\Pfaff \in \Gamma$ is defined by at most $m$ Pfaffian functions from $\Pf_n(\alpha,\beta,r)$. 
\begin{enumerate}[label=(\Roman*),ref=(\Roman*)]
\item \label{point:normal-poly-part} For any $D \ge 1$, there is a non-zero polynomial $P \in \R[X_1, \ldots, X_n]$ of degree at most $D$, and a constant $C = C(n, m, \alpha, \beta, r)$, such that each connected component of $\R^n \setminus Z(P)$ intersects at most $C\beta_1$ elements of $\Gamma$, where
\[
\beta_1 = \begin{cases}
    \frac{|\Gamma|}{D^{n - k - r}} & \text{if $k > 0$} \\
    \frac{|\Gamma|}{D^n} & \text{if $k = 0$}
\end{cases}.
\]
\item \label{point:pfaffian-poly-part} Suppose that $\vec{q}$ is an algebraically independent Pfaffian chain (see Definition \ref{defn:algebraically-independent-pfaffian-chain}) of chain-degree $\alpha$ and order $r$, and suppose that all Pfaffian functions involved in defining the elements of $\Gamma$ are defined w.r.t. $\vec{q}$. For any $D \ge 1$, there is a non-zero Pfaffian function $P' \in \Pf_n(D, \vec{q})$, and a constant $C = C(n, m, \alpha, \beta, r)$, such that each connected component of $\R^n \setminus Z(P')$ intersects at most $C\beta_2$ elements of $\Gamma$, where
\[\beta_2 = \begin{cases}
    \frac{|\Gamma|}{D^{n-k}} & \text{if $k > 0$} \\
    \frac{|\Gamma|}{D^{n+r}} & \text{if $k = 0$}
\end{cases}.\]
\end{enumerate}
\end{theorem}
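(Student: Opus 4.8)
Both statements are instantiations of the more general partitioning results Theorem~\ref{thm:poly-part-bbz} and Theorem~\ref{thm:pfaff-part-bbz}, so the real task is to prove polynomial and Pfaffian partitioning theorems for a collection of semi-Pfaffian sets and then to simplify the topological quantity that appears. For part~\ref{point:normal-poly-part} with $k>0$ the plan is to run Guth's polynomial ham-sandwich construction for a collection of varieties, with each $\Pfaff\in\Gamma$ playing the role of a variety and the partitioning polynomial built as a product $P=Q_1\cdots Q_\ell$ with $Z(P)$ having $\sim D^n$ cells. In that argument the geometry of the objects enters essentially only through the fact that each is cut by a hypersurface of degree $\le D$ into boundedly many pieces (algebraically, $\sim D^{k}$ pieces for a $k$-dimensional variety); here we instead feed in the uniform estimate established earlier in the paper, namely that a $k$-dimensional semi-Pfaffian set defined w.r.t.\ a chain of order $r$ meets at most $\sim D^{k+r}$ cells of $\R^n\setminus Z(P)$. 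In effect such a set behaves, for partitioning purposes, like a $(k+r)$-dimensional algebraic variety, and carrying the estimate through Guth's argument makes each cell meet at most $C(n,m,\alpha,\beta,r)\,|\Gamma|\,D^{(k+r)-n}=C\beta_1$ members of $\Gamma$. When $k=0$ a semi-Pfaffian set is a finite point set whose size is bounded by a constant $C_0=C_0(n,m,\alpha,\beta,r)$ (Khovanski{\u\i}'s bound on the number of non-degenerate solutions of a Pfaffian system), so splitting each $\Pfaff$ into its points and applying classical polynomial partitioning (Theorem~\ref{thm:guth-polynomial-partitioning} with $k=0$) to the resulting $\le C_0|\Gamma|$ points gives each cell $\sim|\Gamma|/D^{n}=\beta_1$ members.

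For part~\ref{point:pfaffian-poly-part} the plan is to rerun the same scheme with every polynomial ham-sandwich step replaced by a \emph{Pfaffian} ham-sandwich step relative to $\vec q$: given finitely many point sets in $\R^n$, the Borsuk--Ulam argument applied to the finite-dimensional space $\Pf_n(d,\vec q)$ produces a nonzero $P'\in\Pf_n(d,\vec q)$ whose zero set bisects all of them. This is where algebraic independence of $\vec q$ (Definition~\ref{defn:algebraically-independent-pfaffian-chain}) is used: it forces the functions $x^a\vec q(x)^b$ with $|a|+|b|\le d$ to be linearly independent, so $\dim\Pf_n(d,\vec q)=\binom{n+r+d}{n+r}\gtrsim d^{n+r}$ and one can bisect $\gtrsim d^{n+r}$ sets at degree cost $d$. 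Since $\Pf_n(d_1,\vec q)\cdot\Pf_n(d_2,\vec q)\subseteq\Pf_n(d_1+d_2,\vec q)$, iterating $\ell$ stages yields $P'\in\Pf_n(D,\vec q)$, and by Khovanski{\u\i}'s bounds $\R^n\setminus Z(P')$ has $\sim D^{n+r}$ cells. Because all members of $\Gamma$ are defined w.r.t.\ the same chain $\vec q$, intersecting $\Pfaff\in\Gamma$ with $\R^n\setminus Z(P')$ does not raise the order of the chain, so $\Pfaff$ still meets only $\sim D^{k+r}$ of these cells; running the same counting with $(k+r,\,n+r)$ in place of $(k+r,\,n)$ gives, for $k>0$, each cell meeting at most $C\,|\Gamma|\,D^{(k+r)-(n+r)}=C\,|\Gamma|\,D^{k-n}=C\beta_2$ members, and $\sim|\Gamma|/D^{n+r}=\beta_2$ for $k=0$ by the point-set argument above.

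The conceptual heart, and the step I expect to be the main obstacle, is the uniform topological estimate underpinning all of the above: that a $k$-dimensional semi-Pfaffian set of order $r$ is cut by a hypersurface of degree $\le D$ (a polynomial, or a Pfaffian function w.r.t.\ $\vec q$) into only $\sim D^{k+r}$ connected components. This is exactly the ingredient that has no counterpart over a general o-minimal structure (Remark~\ref{rem:o-minimal-poly-part-difficult}), and I would prove it along the classical route: perturb $\Pfaff$ and the hypersurface to reduce to a compact nonsingular configuration, then bound the number of connected components of the resulting semi-Pfaffian set using the Gabrielov--Vorobjov refinements of Khovanski{\u\i}'s theory of fewnomials, with the extra ``$+r$'' in the exponent the toll exacted by the length-$r$ chain. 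Once that estimate is available, the remainder is essentially bookkeeping: that products of Pfaffian functions w.r.t.\ $\vec q$ stay in $\Pf_n(\cdot,\vec q)$ with additive degree, that $\dim\Pf_n(d,\vec q)\gtrsim d^{n+r}$ under algebraic independence, and that every implied constant depends only on $(n,m,\alpha,\beta,r)$.
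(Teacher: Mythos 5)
Your treatment of Part~\ref{point:normal-poly-part} is, up to minor reorganization of the $k=0$ case (you split each zero-dimensional $\Pfaff$ into its Khovanski{\u\i}-bounded point set and invoke the classical partitioning theorem; the paper handles $k_i=0$ directly inside Theorem~\ref{thm:poly-part-bbz}), the same as the paper's: run the Guth--Katz/Blagojevic--Blagojevic--Ziegler partitioning argument with the algebraic Barone--Basu ingredient replaced by Theorem~\ref{thm:bezout-pfaffian}, which gives that each $k$-dimensional $\Pfaff$ meets at most $\sim D^{k+r}$ cells. The paper packages this as Theorem~\ref{thm:poly-part-bbz} and invokes it.

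For Part~\ref{point:pfaffian-poly-part} your route is genuinely different from the paper's. The paper does not develop a Pfaffian Stone--Tukey argument in $\R^n$; it lifts each $\Pfaff$ to $\Pfaff'\subset\R^{n+r}$, the intersection of a semi-algebraic set with the graph $\mathcal{G}$ of $\vec q$, applies Part~\ref{point:normal-poly-part} in $\R^{n+r}$ to produce a polynomial $P\in\R[X,Y]$, and sets $P'(\x)=P(\x,q_1(\x),\ldots,q_r(\x))$. The exponent works because the lifted sets are still $k$-dimensional and of order $r$ in $\R^{n+r}$, so Part~\ref{point:normal-poly-part} yields $D^{(n+r)-k-r}=D^{n-k}$ in the denominator, and the projection $\mathcal{G}\to\R^n$ carries cells of $\mathcal{G}\setminus Z(P)$ bijectively to cells of $\R^n\setminus Z(P')$. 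You instead stay in $\R^n$, argue $\dim\Pf_n(d,\vec q)\sim d^{n+r}$, bisect iteratively with Pfaffian hypersurfaces, and balance $\sim D^{n+r}$ sign conditions against $\sim D^{k+r}$ cells per $\Pfaff$; both computations land on $D^{k-n}$. The paper's lifting trick is shorter because it reuses Part~\ref{point:normal-poly-part} wholesale and only needs to check compatibility of the projection with the partition; your direct argument is more intrinsic to $\R^n$ but must develop the Pfaffian Veronese embedding, the bisection theorem, and the degree/cell counts from scratch (all of which are available via Lemma~\ref{lem:part}, Corollary~\ref{cor:khovanskii} and Theorem~\ref{thm:bezout-pfaffian}).

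One caveat, which applies equally to the paper's own closing step in Part~\ref{point:pfaffian-poly-part}: Definition~\ref{defn:algebraically-independent-pfaffian-chain} asserts algebraic independence of $q_1,\ldots,q_r$ over $\R$, whereas the linear independence of the Pfaffian monomials $\vecx^a\,q(\vecx)^b$ (equivalently $\dim\Pf_n(d,\vec q)=\binom{n+r+d}{n+r}$, equivalently the implication that $P\in\R[X,Y]$ nonzero forces $P(\vecx,q(\vecx))\not\equiv 0$) requires algebraic independence over $\R(X_1,\ldots,X_n)$. For instance $\vec q=(e^x,xe^x)$ is algebraically independent over $\R$, yet $Y_2-X Y_1$ vanishes identically when $Y_i$ is replaced by $q_i$. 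So your dimension claim, like the paper's non-vanishing argument for $P'$, needs the stronger form of independence; all the examples discussed in the paper satisfy it, but it is worth flagging.
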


In our general theorems that are stated and proved in Section \ref{sec:main-proofs-subsection}, we have multiple collections of semi-Pfaffian sets, and we establish the existence of a partitioning polynomial (Theorem \ref{thm:poly-part-bbz}), and a partitioning Pfaffian function (Theorem \ref{thm:pfaff-part-bbz}), that simultaneously partitions all collections. Polynomial partitioning of multiple collections of real-algebraic sets was considered in \cite{blagojevic2017polynomial}.

Given a set $\Gamma \subseteq \R^n$, all partitioning theorems promise the existence of a function $P$ such that $Z(P)$ partitions $\R^n$ ensuring that each connected component of $\R^n \setminus Z(P)$ intersects a small fraction of the elements of $\Gamma$; in what follows, the exact value of this fraction will be called the \emph{quantitative guarantee} of the theorem.

\begin{remark}[Polynomial partitioning vs Pfaffian partitioning]
Theorem \ref{thm:poly-part-guth}-\ref{point:normal-poly-part}  generalizes Theorem \ref{thm:guth-polynomial-partitioning} since we recover Theorem \ref{thm:guth-polynomial-partitioning} by setting $r=0$ (c.f. Example \ref{example:pfaffian}-\ref{point:polynomial-example}). However, as the value of $r$ gets closer to $n-k$, the quantitative guarantee of Theorem \ref{thm:poly-part-guth}-\ref{point:normal-poly-part} gets closer to the trivial guarantee, i.e. $|\Gamma|$. On the other hand, it is noteworthy that, when $k \ge 1$, the quantitative guarantee in Pfaffian partitioning of a set of semi-Pfaffian sets, i.e. Theorem \ref{thm:poly-part-guth}-\ref{point:pfaffian-poly-part}, matches the quantitative guarantee of polynomial partitioning of a set of semi-algebraic sets in Theorem \ref{thm:guth-polynomial-partitioning}, \emph{irrespective of the value of $r$}; also when $k=0$, the quantitative guarantee is even better than what is obtained in Theorem \ref{thm:guth-polynomial-partitioning}. However, the caveat is that the object that accomplishes the partitioning of our set of semi-Pfaffian sets in Theorem \ref{thm:poly-part-guth}-\ref{point:pfaffian-poly-part}, namely the zero set of the Pfaffian function $P'$, is \emph{more topologically complex} than the real algebraic set that partitions the set of semi-algebraic sets in Theorem \ref{thm:guth-polynomial-partitioning}, which is the zero set of the polynomial $P$. Having said that, using some auxiliary tools developed in Section \ref{sec:preliminaries-tools} and due to the existence of Khovanski{\u\i}'s B\'ezout type bound for Pfaffian sets, by using \emph{constant-degree} Pfaffian functions to partition our set of semi-Pfaffian sets, we are able to control the complexity of the partitioning Pfaffian set. Pfaffian partitioning with constant degree Pfaffian functions is used in all applications shown in Section \ref{sec:applications}; see Section \ref{sec:constantsvshighdegree} for more details regarding the complexity of the partitioning set and how it influences our applications.
\end{remark}

As discussed in Remark \ref{rem:o-minimal-poly-part-difficult}, the key step for proving Theorem \ref{thm:poly-part-guth} is to prove a semi-Pfaffian analogue of Theorem \ref{thm:barone-basu-ST}. In the statement of Theorem \ref{thm:bezout-pfaffian} below, $b_0$ denotes the $0^{\text{th}}$ Betti number, i.e., the number of connected components.

\begin{theorem}[Semi-Pfaffian Barone-Basu type bound]
\label{thm:bezout-pfaffian}
Let $\vec{q}$ be a Pfaffian chain of chain-degree $\alpha$ and order $r$. Let $\Pfaff \subseteq \R^n$ be a $k$-dimensional semi-Pfaffian set defined by at most $m$ Pfaffian functions from $\Pf_n(\beta, \vec{q})$. Then for any $P \in \Pf_n(D, \vec{q})$, where $D \ge 1$, there exists a constant $C = C(n, m, \alpha, \beta, r)$ such that 
\[b_0\left(\Pfaff \cap (\R^n \setminus Z(P))\right) \le CD^{k+r}.\]
\end{theorem}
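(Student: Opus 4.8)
The plan is to reduce the problem to a Khovanskii-type Bézout bound for the number of connected components of a semi-Pfaffian set. The key observation is that the connected components of $\Pfaff \cap (\R^n \setminus Z(P))$ that we wish to count are in bijective (or at least injective-into) correspondence with connected components of a slightly perturbed semi-Pfaffian set where the open condition $P \neq 0$ has been replaced by two closed sign conditions. More precisely, each connected component $C$ of $\Pfaff \cap (\R^n \setminus Z(P))$ lies entirely in either $\{P > 0\}$ or $\{P < 0\}$, so $b_0(\Pfaff \cap (\R^n\setminus Z(P))) = b_0(\Pfaff \cap \{P>0\}) + b_0(\Pfaff \cap \{P<0\})$. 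It therefore suffices to bound the number of connected components of a semi-Pfaffian set defined by $m+1$ Pfaffian functions w.r.t. the chain $\vec q$, where the new function $P$ has degree $D$ and the old ones have degree $\beta$.

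The main technical engine is Khovanskii's theory: the number of connected components of a semi-Pfaffian set defined by Pfaffian functions with respect to a chain of order $r$ is bounded by a function that is polynomial in the degrees of the defining functions, with the exponent on the dominant degree parameter governed by the \emph{ambient dimension plus the order} of the chain — roughly $D^{n+r}$ for a set cut out inside $\R^n$. However, this naive application gives $D^{n+r}$, not $D^{k+r}$, so the crucial extra step is to exploit that $\Pfaff$ is only $k$-dimensional. Here I would follow the strategy used to prove the semi-algebraic Barone–Basu bound (Theorem \ref{thm:barone-basu-ST}): stratify or decompose $\Pfaff$ into finitely many (boundedly many, with the bound depending only on $n, m, \alpha, \beta, r$) smooth semi-Pfaffian pieces each of dimension $\le k$, reduce to the top-dimensional smooth case, and then for a $k$-dimensional smooth Pfaffian manifold intersected with $\{P > 0\}$, argue that its Betti numbers (in particular $b_0$) are controlled by applying the Khovanskii bound in a $k$-dimensional chart — or, alternatively, slicing by a generic $k$-dimensional affine subspace and using a Morse-theoretic / critical-point argument on the function $P$ restricted to $\Pfaff$, where the critical points themselves form a Pfaffian set whose cardinality is bounded by $\sim D^{k+r}$ (the exponent $k$ coming from the dimension count for the $k$ equations defining criticality, and $r$ from the chain order, which does not decrease under restriction to a subvariety).

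The hard part will be this dimension-reduction step: making rigorous that one only pays $D^k$ rather than $D^n$ despite $\Pfaff$ sitting inside $\R^n$. In the semi-algebraic setting this is done via the Barone–Basu argument using a generic projection or a parametrized family of sections, together with the fact that a $k$-dimensional algebraic set is a finite union of graphs of algebraic maps over $k$-dimensional coordinate subspaces; the Pfaffian analogue requires care because (i) Pfaffian functions are not closed under composition with arbitrary algebraic maps in a way that preserves the chain, and (ii) one must control the order of the resulting chain after restriction. I expect the resolution to use the \emph{effective cylindrical cell decomposition} or \emph{stratification} for semi-Pfaffian sets (Gabrielov–Vorobjov), which decomposes $\Pfaff$ into finitely many cells, each a graph over a coordinate subspace of dimension $\le k$, with the number of cells and the complexity of the functions involved bounded in terms of $n, m, \alpha, \beta, r$ only — and where, crucially, restricting a Pfaffian chain to such a cell yields a Pfaffian chain of the same order $r$. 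Once each cell is realized as (the image of) a $k$-dimensional semi-Pfaffian set in $\R^k$ with chain order $r$, the Khovanskii bound gives $\sim D^{k+r}$ components per cell, and summing over the boundedly many cells yields the theorem with an appropriate constant $C(n,m,\alpha,\beta,r)$.
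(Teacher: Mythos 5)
Your plan is essentially the same as the paper's proof: decompose $\Pfaff$ into graph cells via an effective stratification (Lemma~\ref{lem:part}), run a Morse-theoretic critical-point argument on a perturbation $P \pm \eps$ of $P$ restricted to each cell, and count the critical points via Khovanskii's bound, with the graph equations contributing the exponent $k$ and the chain order contributing the exponent $r$, giving $\sim D^{k+r}$ per cell. Two caveats worth flagging. First, the alternative you float --- projecting a cell down to $\R^k$ and applying Khovanskii there --- is problematic: sub-Pfaffian sets (projections of semi-Pfaffian sets) need not be semi-Pfaffian, and even on a graph cell, pulling $P$ back through the implicit map $\mathbf{f}_i$ changes the Pfaffian chain in a way that is not easy to control; the paper deliberately avoids this by staying in $\R^n$ and encoding ``being on the graph'' as $n-k$ extra Pfaffian equations inside the Khovanskii system $\{ \text{graph of } \mathbf{f}_i,\, P-\eps = \partial P/\partial x_2 = \cdots = \partial P/\partial x_k = 0\}$. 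Second, Khovanskii's theorem counts only non-degenerate solutions, so you need a deformation argument (the paper's Corollary~\ref{cor:khovanskii}, via $\eps_1 \gg \cdots \gg \eps_n$) to cover possibly degenerate critical points, and a generic rotation of the $x_1, \ldots, x_k$ coordinates (via Sard) to guarantee the critical locus $G \cap \mathcal{C}_i$ is finite; you also need the induction on $k$ to dispose of lower-dimensional strata and of the components of $\mathcal{C}_i \setminus Z(P)$ that touch $\partial \mathcal{C}_i$. These are gaps in rigor rather than in strategy, but they are exactly where the proof's technical work lies.
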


Setting $r=0$ in Theorem \ref{thm:bezout-pfaffian} shows that it generalizes Theorem \ref{thm:barone-basu-ST}. The bound in Theorem \ref{thm:bezout-pfaffian}, which obviously also works if $P \in \R[X_1, \ldots, X_n]$ of degree at most $D$, is obtained by setting up a system of Pfaffian equations such that the number of zeros of the system gives an upper bound on the number of connected components of $\Pfaff \cap (\R^n \setminus Z(P))$; the key to counting the number of solutions of this system is Khovanski{\u\i}'s B\'ezout type bound for Pfaffian sets, Theorem~\ref{thm:khovanskii}.

\begin{remark}[Regarding the `$r$' in the exponent of $D$ in bound of Theorem \ref{thm:bezout-pfaffian}]
    In Khovanski{\u\i}'s B\'ezout type bound for Pfaffian functions (Theorem \ref{thm:khovanskii}), the exponent in the bound depends on the order $r$ of the Pfaffian chain, i.e., the largest degree of the Pfaffian functions involved is raised to the exponent $r$. It is this dependence that appears in the bound in Theorem \ref{thm:bezout-pfaffian}. On the one hand, Khovanski{\u\i}'s bound is known to be at least exponential in $r$.
    On the other hand, while Theorem \ref{thm:barone-basu-ST} is indeed asymptotically tight, we do not prove that Theorem \ref{thm:bezout-pfaffian} is asymptotically tight.
\end{remark}

\begin{remark}[Asymptotics in theorem statements] \label{rem:asymptotics}
In our main theorems, there are a number of different parameters involved - chain degree $\alpha$, degree $\beta$, order $r$, number of equations defining our Pfaffian sets $m$, cardinality of the set of varieties/Pfaffian-sets $|\Gamma|$, etc. These paramaters are customarily separated into two categories -- \emph{algebraic} (e.g., degree $\beta$, order $r$), and \emph{combinatorial} (e.g., $|\Gamma|$). In general, one could be interested in various different asymptotic regimes for the parameters involved. However for several applications, especially in incidence combinatorics, discrete geometry and harmonic analysis, the situation of relevance is where $|\Gamma|$ goes to infinity, while the other parameters are fixed. In other words, there are a \emph{huge number} of objects, but the \emph{complexity} of the individual objects is small. This motivates the treatment of certain parameters as part of the constants in our statements.
\end{remark}

\subsection{Results: Applications of Partitioning Theorems} 
In Section \ref{sec:applications}, we derive two types of applications of our main theorems and other auxiliary tools proved in Section \ref{sec:preliminaries-tools}. In our first application, we study incidences between points and Pfaffian curves in $\R^n$. Incidence problems are a major topic in discrete geometry that are both interesting on their own right, and also have connections to other mathematical problems such as the sum-product problem \cite{elekessumproduct} and Kakeya conjectures \cite{wolffkakeya}; see \cite{dvirincidencesurvey} for a survey of the area. In Theorem \ref{thm:st-rn}, we prove a Szemer\'edi-Trotter type theorem where, with non-degeneracy assumptions, we give an upper bound on the number of incidences between points and Pfaffian curves in $\R^n$. In Corollary \ref{cor:st-plane-curves}, which is an instantiation of Theorem \ref{thm:st-rn}, we prove an upper bound on the number of incidences between points and Pfaffian curves in $\R^2$. Corollary \ref{cor:st-plane-curves} has milder non-degeneracy assumptions, and but for a small $\eps$ factor, generalizes \cite[Theorem 3.3]{sheffer2022polynomial} and the original Szemer\'edi-Trotter theorem \cite{szemereditrotter}, which is perhaps the first non-trivial incidence result. 

The setting of Corollary \ref{cor:st-plane-curves} is not new in literature as both \cite{basu2018minimal} and and \cite{chernikov2020cutting} prove Szemer\'edi-Trotter type theorems in arbitrary o-minimal/distal structures, albeit with different formulations and different non-degeneracy assumptions. We would like to emphasise that the techniques in \cites{basu2018minimal,chernikov2020cutting} are quite different and are arguably more involved; our partitioning theorem allows us to prove Pfaffian Szemer\'edi-Trotter type theorems by following the structure of the simpler methods that are used in the algebraic case. 

As our second application, in Theorem \ref{thm:pfaffian-joints}, we derive the first upper bound on the number of joints formed by $n$ sets of Pfaffian curves in $\R^n$. The joints problem (see Definition \ref{defn:joints} for a precise definition of joints) was originally posed in \cite{chazelle1992counting} and improved upon in multiple works, for example \cites{sharir1994joints, feldman2005improved}, with the final word on the number of joints for lines in $\R^3$ in \cite{guth2010algebraic}. The problem of counting joints is again both of intrinsic mathematical value, and also motivated by applications in computational geometry related to the \emph{hidden surface removal} problem; further, connections betwen the joints problem and the Kakeya problem have been exploited successfully \cites{wolffkakeya,bennettkakeya}.

Theorem \ref{thm:pfaffian-joints} is a generalization of the results of \cites{kaplan2010lines,quilodran2010joints} and \cite[Theorem 9]{yang2016generalizations}. As before, we are able to prove a generalization by following the structure of the proof in the algebraic case, albeit with appropriate modifications.

We discuss our applications and finer technical points regarding them in more detail in Section \ref{sec:applications-discussion}. Our applications, i.e., Theorem \ref{thm:st-rn}, Corollary \ref{cor:st-plane-curves}, and Theorem \ref{thm:pfaffian-joints}, demonstrate the efficacy of our partitioning theorems and other auxiliary results in producing Pfaffian versions of discrete geometry theorems. The generality of our main theorems suggests that it is reasonable to expect that there will be many more applications to combinatorial problems involving semi-Pfaffian sets.

\subsection{Acknowledgements} AN would like to specially thank Adam Sheffer for his comments and very helpful discussions. AN would also like to thank Saugata Basu and Joshua Zahl for comments on a preliminary draft, and Alexander Balsera for useful discussions. AN was supported by EPSRC Grant EP/V003542/1.

\section{Preliminaries}
\label{sec:preliminaries}

\begin{notation}[General notation] $\N$ denotes the set of natural numbers $\{0, 1, 2, \ldots \}$ and $\N_{>0}$ the positive integers. We use the notation $[n] := \{1, \ldots, n\}$. We denote by $\R[X_1, \ldots, X_n]_{(\le D)}$ the vector subspace of the polynomial ring $\R[X_1,\dots,X_n]$ containing polynomials of degree at most $D$. We use the notation $\vecx=(x_1,\dots,x_n)\in \R^n$ for a vector. 
\end{notation}

\subsection{Basic Definitions}
\label{sec:pfaffian}

Pfaffian functions are real or complex analytic functions that satisfy triangular systems of first-order partial differential equations with polynomial coefficients. 

\begin{definition}[Pfaffian functions]
\label{defn:pfaffian}
Let $\mathcal{U} \subseteq \R^n$ be an open set. A \emph{Pfaffian chain} of \emph{order} $r \in \N$ and \emph{chain-degree} $\alpha \in \N_{>0}$ over $\mathcal{U}$ is a sequence of functions $\vec{q} = (q_1, \ldots, q_r)$, $q_i \in C^{\infty}(\mathcal{U})$, such that there exist polynomials $P_{i, j} \in \R[X_1, \ldots, X_n, Y_1, \ldots, Y_j]_{(\le \alpha)}$ verifying
\begin{equation} \label{eqn:pfaffian-differential-condition}
\de q_j(\vecx) = \sum_{i=1}^n P_{i,j}(\x, q_1(\vecx), \ldots, q_j(\vecx))\,\de x_i,
\end{equation}
for all $j \in [r]$.

Let $\beta \in \N_{>0}$. A function $f(\vecx) = P(\vecx, q_1(\vecx), \ldots, q_r(\vecx))$, where $P \in \R[X_1, \ldots, X_n, Y_1, \ldots, Y_r]_{(\le \beta)}$, is called a \emph{Pfaffian function}.
We will say that $f$ is \emph{defined w.r.t. the chain $\vec{q}$}, has \emph{degree} $\beta$, has \emph{order} $r$, and has \emph{chain-degree} $\alpha$. We will also sometimes say that $f$ has \emph{degree} $(\alpha, \beta)$, and/or $f$ has \emph{format} $(\alpha, \beta, r)$.
\end{definition}

\begin{remark} There are a few different definitions of Pfaffian functions in the literature. Definition \ref{defn:pfaffian}, which matches the definition of Pfaffian functions in \cite{gabrielov2004complexity}, is slightly more restrictive than the original definition in \cite{khovanskiui1991fewnomials}. However, both definitions lead to nearly the same class of Pfaffian functions, albeit sometimes with different formats. In~\cites{MR1889560,MR1658452}, the authors allow for non-polynomial coefficients in Equation \eqref{eqn:pfaffian-differential-condition}.
\end{remark}

\begin{remark}[Domain $\mathcal{U}$ of Pfaffian functions \cite{gabrielov2004complexity}] The results in the paper are stated in terms of the format of the various semi-Pfaffian sets we consider. These results also depend on the domain $\mathcal{U}$. While Definition \ref{defn:pfaffian} imposes no restrictions on $\mathcal{U}$, thus allowing it to be arbitrarily complex causing the corresponding Pfaffian/semi-Pfaffian sets to also be arbitrarily complex, in the rest of the paper, we will always assume that the domain is `simple', like $\R^n$, $[0, 1]^n$, $\left\{\vecx \suchthat x_1 > 0, \ldots, x_n > 0\right\}$, or $\left\{\vecx \suchthat \|\vecx\|_2^2 < 1\right\}$. In more generality, we could have $\mathcal{U}$ be a semi-Pfaffian set defined by Pfaffian functions on a larger domain $\mathcal{U}' \supseteq \mathcal{U}$, which in turn is defined by Pfaffian functions on some $\mathcal{U}'' \supseteq \mathcal{U}'$, and so on (see \cite{gabrielov2003relative}).
\end{remark}

\begin{example}[Examples of Pfaffian functions] \label{example:pfaffian}
\begin{enumerate}[label=(\Roman*),ref=(\Roman*)]
\item \label{point:polynomial-example} A polynomial of degree $D$ is a Pfaffian function w.r.t. the empty chain; format is $(\alpha, D, 0)$ for any integer $\alpha>0$.
\item If $q(x) = e^{ax}$, $\vec{q} = (q(x))$ is a Pfaffian chain over $\R$ given $\de q(x) = a q(x) \de x$; $\vec{q}$ has order $1$ and chain-degree $1$. Any $P \in \R[X, e^{aX}]$ is Pfaffian function w.r.t. $\vec{q}$.
\item As a generalization of the example above, for $i \in [r]$, let $q_i(x) = e^{q_{i-1}(x)}$, and $q_0(x) = ax$. Since $\de q_i(x) = aq_1(x)\ldots q_i(x) \de x$, $\vec{q} = (q_1, \ldots, q_r)$ is a Pfaffian chain of order $r$ and chain-degree $r$. Consequently, any $P \in \R[X, e^{aX}, e^{e^{aX}}, \ldots ]$ is a Pfaffian function w.r.t. $\vec{q}$.
\item $\vec{q} = (\tan(x))$ is a Pfaffian chain of order $1$ and chain-degree $2$ in the domain $\bigcap_{k \in \Z} \{x \in \R \suchthat x \neq \nicefrac{\pi}{2} + k\pi\}$, given $\de \tan(x) = \left(1 + \tan(x)^2\right) \de x$; any $P \in \R[X, \tan(X)]$ is a Pfaffian function w.r.t. $\vec{q}$. 
\item $\vec{q} = \left(\frac{1}{x}, \ln(x)\right)$ is a Pfaffian chain on the domain $R \setminus \{0\}$ since \[\de \frac{1}{x} = -\left(\frac{1}{x}\right)^2 \de x, \qquad \text{and} \qquad \de \ln(x) = \frac{1}{x} \de x.\] Consequently, any $P \in \R\left[X, \frac{1}{X}, \ln(X)\right]$ is a Pfaffian function.
\item \label{point:x-to-m-example} $\vec{q} = \left(\frac{1}{x}, x^m\right)$ for any $m \in \R$ is a Pfaffian chain since \[
\de \frac{1}{x} = -\left(\frac{1}{x}\right)^2 \de x, \qquad \text{and} \qquad \de x^m = m \cdot \frac{1}{x} \cdot x^m \de x.
\] Thus any $P \in \R\left[X, \frac{1}{X}, X^m\right]$ is a Pfaffian function w.r.t. $\vec{q}$.
\item \label{point:fewnomials-example} For any monomial $m_{i_1, \ldots, i_n} := a_{i_1, \ldots, i_n}x_1^{i_1}\ldots x_n^{i_n}$ with $a_{i_1, \ldots, i_n} \neq 0$, $\left(\frac{1}{x_1}, \ldots, \frac{1}{x_n}, m_{i_1, \ldots, i_n}\right)$ is a Pfaffian chain on the domain $\{\vecx \in \R^n \suchthat x_1\dots x_n \neq 0\}$ of order $n+1$, and chain-degree  $2$, given that
\begin{equation*}
\de m_{i_1, \ldots, i_n} = m_{i_1, \ldots, i_n}\cdot \sum_{j=1}^n i_j \frac{1}{x_j} \de x_j, \qquad \text{and} \qquad \de \frac{1}{x_j} = -\left(\frac{1}{x_j}\right)^2 \de x_j.
\end{equation*}
From this we can deduce that if $f$ is a polynomial that is a sum of $s$ such monomials, called a \emph{fewnomial of sparsity $s$}, then $f$ is a Pfaffian function (on the same domain as before) of order $n+s$ and degree $(2, 1)$.
\end{enumerate}
\end{example}

We define the following restricted class of Pfaffian chains.

\begin{definition}[Algebraically independent Pfaffian chain]
\label{defn:algebraically-independent-pfaffian-chain}
A Pfaffian chain $\vec{q} = (q_1, \ldots, q_r)$ is called \emph{algebraically independent} if for all non-zero $P \in \R[Y_1, \ldots, Y_r]$, $P(q_1(\x), \ldots, q_r(\x))$ is not identically zero.
\end{definition}

\begin{remark}[Regarding algebraic independence of Pfaffian chains]
    We prove two types of partitioning theorems, namely partitioning using a polynomial and partitioning using a Pfaffian function, and we place the requirement that the common Pfaffian chain be algebraically independent only in the latter case. This is solely to ensure that the partitioning Pfaffian function obtained is non-zero; the zero function trivially satisfies the promised conditions of partitioning theorems, but is worthless from the point of view of applications. 

    How restrictive is the condition of algebraic independence? All examples in Example \ref{example:pfaffian}, except \ref{point:x-to-m-example} and \ref{point:fewnomials-example}, have algebraically independent Pfaffian chains. In fact, if in Example \ref{example:pfaffian}-\ref{point:x-to-m-example}, $m \in \R \setminus \Q$, then $\vec{q}$ would be algebraically independent (see below for a proof of this). Also, if the different elements of the Pfaffian chain contain different elementary functions, the Pfaffian chain will be algebraically independent. 
\end{remark}

\begin{proof}[Proof of algebraic independence of the Pfaffian chain in Example \ref{example:pfaffian}-\ref{point:x-to-m-example} when $m \in \R \setminus \Q$]
    We wish to prove that $\vec{q} = \left(\frac{1}{x}, x^m\right)$ is an algebraically independent Pfaffian chain whenever $m$ is irrational.

    Suppose, by way of contradiction, that for some fixed $m \in \R \setminus \Q$, there is a polynomial $P \in \R[Y_1, Y_2]$, of degree say $D$, such that $P \neq 0$ and $P\left(\frac{1}{X}, X^m\right)$ is identically $0$. Suppose that $P$ is made of monomials $\{Y_1^{a_i}Y_2^{b_i}\}$ such that $a_i, b_i \in \Z_{\ge 0}$ and $a_i + b_i \le D$. On substitution, the monomial $Y_1^{a_i}Y_2^{b_i}$ becomes $X^{mb_i - a_i}$. Notice that the map
    \[
    Y_1^{a_i}Y_2^{b_i} \mapsto X^{mb_i - a_i}
    \] is injective; because if not, it would mean that there exist $a, b, a', b' \in \Z_{\ge 0}$ such that $(a, b) \neq (a', b')$ and $mb' - a' = mb - a$. This in turn would mean that $m = \frac{a' - a}{b' - b}$, which is a contradiction to the assumption that $m \in \R \setminus \Q$. Consequently, if $P\left(\frac{1}{X}, X^m\right)$ is identically $0$, it means that $mb_i - a_i = 0$ for all $(a_i, b_i)$; which is also a contradiction to the assumption that $m \in \R \setminus \Q$.
\end{proof}

\begin{definition}[Pfaffian sets] A \emph{Pfaffian set} in an open domain $\mathcal{U} \subseteq \R^n$ is a set of the form \[Z(g_1, \ldots, g_k) := \{\vecx \in \mathcal{U} \suchthat g_1(\vecx) = \cdots = g_k(\vecx) = 0\},\] where $g_i: \R^n \rightarrow \R$ are Pfaffian functions.
\end{definition}

\begin{definition}[Semi-Pfaffian sets] A set $\Pfaff \subseteq \R^n$ is called \emph{semi-Pfaffian} in an open domain $\mathcal{U} \subseteq \R^n$ if it is the locus in $\mathcal{U}$ of a Boolean combination of equations $f = 0$ and inequalities $g > 0$, where $f, g$ are Pfaffian functions having a common Pfaffian chain defined on $\mathcal{U}$. 

A \emph{basic semi-Pfaffian set} is one where the corresponding Boolean combination is just a conjunction of equations and strict inequalities. That is, a basic semi-Pfaffian set is of the form
\begin{equation*}
  \Pfaff = \{\vecx \in \mathcal{U} \colon g_1(\vecx)=\cdots =g_s(\vecx)=0, \ h_1(\vecx)>0,\dots,h_t(\vecx)>0\},
\end{equation*}
where the $g_i$ and $h_j$ are Pfaffian functions having a common Pfaffian chain defined on $\mathcal{U}$.

The \emph{complexity} of a semi-Pfaffian set $\Pfaff$ is defined as the coordinatewise maximum of the formats of the Pfaffian functions that define $\Pfaff$. 

We say a semi-Pfaffian set has \emph{order} $r$ if it can be defined by Pfaffian functions defined w.r.t. the same Pfaffian chain of order not exceeding $r$.
\end{definition}

Our proofs depend on the fact that semi-Pfaffian sets can be decomposed into semi-Pfaffian subsets, in such a way that the complexity of these subsets can be controlled. Relevant definitions and results below are based on~\cite{gabrielov2004complexity}, and we refer to that source for more background, proofs and references.

\begin{definition}[Weak stratification]
\label{defn:weak-stratification}
A {\em weak stratification} of a semi-Pfaffian set $\Pfaff$ is a partition of $\Pfaff$
into a disjoint union of smooth, not necessarily connected, possibly empty semi-Pfaffian subsets $\Pfaff_i$, called {\em strata}.
A stratification is {\em basic} if all strata are basic semi-Pfaffian sets that are {\em effectively nonsingular}, i.e.,
the system of equations and strict inequalities for each stratum $\Pfaff_i$ of codimension $k$ includes a set of $k$
Pfaffian functions $h_{1}, \ldots , h_{k}$ vanishing on $\Pfaff_i$, such that
their gradients are linearly independent at every point in $\Pfaff_i$.
\end{definition}

\subsection{Pfaffian and Real algebraic geometry preliminaries}
In this section, we collect a few well-known results in Pfaffian and real-algebraic geometry that we use in the rest of the paper.

An important result in the theory of Pfaffian functions is Khovanski{\u\i}'s B\'ezout type theorem for Pfaffian sets. 

\begin{theorem}[Khovanski{\u\i}'s theorem {\cite[\S 3.12, Corollary 5]{khovanskiui1991fewnomials}}] \label{thm:khovanskii} Let $\vec{q}$ be a Pfaffian chain of order $r$ and chain-degree $\alpha$, where the functions in the Pfaffian chain depend only on $\xi \le n$ variables. Let $f_1, \ldots, f_n$ be Pfaffian functions on an open set $\mathcal{U} \subseteq \R^n$, where $f_i \in \Pf_n(\beta_i, \vec{q})$. The number of non-degenerate solutions of the system $\{\x \in \mathcal{U} \suchthat f_1(\x) = \cdots = f_n(\x) = 0\}$ is bounded from above by
\[2^{\binom{r}{2}}\beta_1 \cdots \beta_n\left(\min\{\xi, r\}\alpha + \beta_1 + \cdots + \beta_n - n + 1\right)^r.\]
\end{theorem}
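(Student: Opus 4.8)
This is a theorem of Khovanski{\u\i}; I sketch the argument of \cite{khovanskiui1991fewnomials}. Its two pillars are the multidimensional Rolle--Khovanski{\u\i} lemma and an induction on the order $r$ of the chain, with B\'ezout's theorem as the base case.

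First I would pass to a polynomial model. Writing $f_i(\vecx) = P_i(\vecx, q_1(\vecx), \ldots, q_r(\vecx))$ with $P_i \in \R[X_1, \ldots, X_n, Y_1, \ldots, Y_r]_{(\le \beta_i)}$, introduce coordinates $(\vecx, y_1, \ldots, y_r)$ on $\R^{n+r}$ and the polynomial $1$-forms $\omega_j := \diff{y_j} - \sum_{i=1}^{n} P_{i,j}(\vecx, y_1, \ldots, y_j)\,\diff{x_i}$ coming from \eqref{eqn:pfaffian-differential-condition}, each of degree at most $\alpha$. The graph $V := \{(\vecx, q_1(\vecx), \ldots, q_r(\vecx)) : \vecx \in \mathcal{U}\}$ is an $n$-dimensional integral manifold of the (Frobenius-integrable) distribution $\omega_1 = \cdots = \omega_r = 0$, and the nondegenerate solutions of $f_1 = \cdots = f_n = 0$ in $\mathcal U$ are exactly the transverse points of $V \cap \{P_1 = \cdots = P_n = 0\}$. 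Since nondegenerate solutions persist under small perturbations, it suffices to bound, for a generic perturbation of the coefficients of the $P_i$, the number of transverse intersections; genericity also lets us assume that every intermediate locus $V \cap \{P_1 = \cdots = P_i = 0\}$ is smooth of the expected dimension, and a compactification/exhaustion of $\mathcal U$ reduces matters to finitely many components at a time.

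Now induct on $r$. For $r = 0$ there are no forms, $V = \mathcal U \subseteq \R^n$, and the bound is B\'ezout's theorem, $\beta_1 \cdots \beta_n$ --- consistent with the formula since $2^{0} = 1$ and the last factor is a $0$-th power. For the step, restrict attention to the $(n+1)$-dimensional integral manifold $V'$ of $\omega_1, \ldots, \omega_{r-1}$ alone, on which $(\vecx, y_r)$ serve as coordinates; there $q_1, \ldots, q_{r-1}$ are available as Pfaffian functions of order $r-1$ in the $n+1$ variables $(\vecx, y_r)$, the $P_i$ become Pfaffian functions of order $r-1$ that are polynomial of degree $\le \beta_i$ in $y_r$, and $\omega_r|_{V'}$ becomes a single Pfaffian $1$-form $\eta$ of order $r-1$ and degree $\le \alpha$. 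Thus the original count equals the number of zeros of $P_1, \ldots, P_n$ on the $n$-dimensional integral manifold $V$ of $\eta$ inside $V' \cong \R^{n+1}$. Cut $V$ down by $P_1 = \cdots = P_{n-1} = 0$ to a curve $C$ and count the zeros of $P_n$ on $C$ by the Rolle--Khovanski{\u\i} lemma: between two consecutive zeros of $P_n$ along $C$ the derivative of $P_n$ along $C$ vanishes, and that derivative is, up to a nonvanishing factor, the $(n+1)\times(n+1)$ determinant $\Delta$ whose rows are the coefficient vectors of $\eta, dP_1, \ldots, dP_n$; here $\Delta$ is a Pfaffian function of order $r-1$ and degree at most $\alpha + \sum_{i=1}^{n}(\beta_i - 1) + 1$ (and analogously at later stages). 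Hence the number of zeros of $P_n$ on $C$ is at most the number of solutions of the order-$(r-1)$ system $\{P_1 = \cdots = P_{n-1} = \Delta = 0\}$ in $\R^{n+1}$ plus the number of connected components of $C$; the latter is in turn controlled, by a Morse-theoretic argument (critical points of a generic linear function, using the compactness arranged above), by the number of solutions of a further order-$(r-1)$ system of $n+1$ equations in $n+1$ variables of comparable degree. Both auxiliary systems fall under the inductive hypothesis for order $r-1$, and iterating the reduction $r$ times lands on a polynomial system in $\R^{n+r}$ governed by B\'ezout. Carrying the degrees and multiplicities through this recursion yields the claimed bound $2^{\binom{r}{2}}\beta_1 \cdots \beta_n\bigl(\min\{\xi, r\}\alpha + \beta_1 + \cdots + \beta_n - n + 1\bigr)^r$; the improvement from $r\alpha$ to $\min\{\xi, r\}\alpha$ reflects that the forms $\omega_j$ involve only the $\xi$ variables on which the chain functions depend, so at most $\min\{\xi, r\}$ of them raise the degree of the Jacobian determinants.

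The main obstacle is the Rolle--Khovanski{\u\i} lemma together with the genericity infrastructure it needs: one must control the global topology of the intermediate curves (compact versus noncompact components, their ends, orientability, behaviour at $\partial\mathcal U$) so that ``a zero of the derivative between consecutive zeros'' holds with an error term matched exactly by the number of connected components, and one must arrange all the transversality, smoothness and nondegeneracy hypotheses simultaneously by perturbation while checking that the count does not drop. The remaining difficulty is pure bookkeeping: tracking how the degrees of the successive Jacobian equations and the combinatorial factors from each Rolle application accumulate, so that they collapse precisely to the constant $2^{\binom{r}{2}}$ and the stated polynomial in $\alpha, \beta_1, \ldots, \beta_n$.
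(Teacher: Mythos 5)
The paper does not give a proof of this statement at all --- it is quoted directly from Khovanski{\u\i}'s \emph{Fewnomials} (\S 3.12, Corollary 5) as an external black box, so there is no internal argument to compare yours against. Your sketch follows the same strategy as the cited source (pass to the graph of the chain as an integral manifold of polynomial $1$-forms, induct on the order $r$ via the Rolle--Khovanski{\u\i} lemma with B\'ezout as the base case, with the $\min\{\xi,r\}$ refinement coming from the chain depending on only $\xi$ variables), and the outline is faithful, though as you yourself note the quantitative bookkeeping that yields the precise factor $2^{\binom{r}{2}}\left(\min\{\xi,r\}\alpha+\beta_1+\cdots+\beta_n-n+1\right)^r$ --- which is the actual content of the bound --- is left unexecuted.
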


From hereon, we will omit reference to the domain $\mathcal{U}$ and implicitly assume that all occurring Pfaffian functions are defined on the same domain.

The following proposition establishes the existence of a basic weak stratification for any semi-Pfaffian set.

\begin{proposition}[\cite{gabrielov2004complexity}*{Theorem 6.2, Smooth stratification}]
\label{prop:strat}
Let $\vec{q}$ be a Pfaffian chain of order $r$ and chain degree $\alpha$. Any semi-Pfaffian set $\Pfaff \subset \R^n$ defined by $m$ Pfaffian functions in
$\Pf_n(\beta, \vec{q})$ has a finite basic weak stratification. There exist constants $C_1=C_1(\alpha, r, m, n)$ and $C_2=C_2(r,n)$ such that the number of connected components of all strata and their complexities are bounded from above by $C_1\beta^{C_2}$.
\end{proposition}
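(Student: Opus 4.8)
The plan is to follow the construction of \cite{gabrielov2004complexity}*{Theorem 6.2}; I will sketch its structure and indicate where the two quantitative bounds come from. The argument has three ingredients: a reduction to basic sets, a dimension recursion that peels off smooth strata, and the bookkeeping of Pfaffian formats through that recursion.

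\textbf{Reduction to basic semi-Pfaffian sets.} Let $f_1,\dots,f_m\in\Pf_n(\beta,\vec q)$ be the Pfaffian functions occurring in the Boolean formula defining $\Pfaff$. That formula has a constant truth value on each of the at most $3^m$ \emph{sign cells} $C_\sigma=\{\vecx:\operatorname{sign}f_i(\vecx)=\sigma_i,\ i\in[m]\}$, $\sigma\in\{-1,0,1\}^m$, so $\Pfaff$ is a disjoint union of some of the $C_\sigma$. Replacing each condition $f_i<0$ by $-f_i>0$, every nonempty $C_\sigma$ is a basic semi-Pfaffian set defined by $m$ functions from $\Pf_n(\beta,\vec q)$. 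Hence it suffices to produce a basic weak stratification, with the claimed control, of a single basic set
\[
X=\{\vecx\colon g_1(\vecx)=\dots=g_s(\vecx)=0,\ h_1(\vecx)>0,\dots,h_t(\vecx)>0\},\qquad s+t\le m,
\]
and take the union over all cells; strata coming from distinct cells are automatically disjoint.

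\textbf{Dimension recursion.} Set $d=\dim X$. The set $X^{\mathrm{reg}}$ of points near which $X$ is a smooth $d$-dimensional submanifold is, after a further partition according to which $(n-d)\times(n-d)$ minor of the Jacobian $(\partial g_i/\partial x_j)$ is the lexicographically first non-vanishing one, a disjoint union of \emph{basic, effectively nonsingular} semi-Pfaffian sets of codimension $n-d$: on each piece, $n-d$ of the $g_i$ have linearly independent gradients and cut out $X$ locally, so these $n-d$ functions sit inside the defining system as required, the chosen minor enters as a strict inequality, and the minors forced to vanish enter as equations. Here one uses that Pfaffian functions with a fixed chain $\vec q$ are closed under differentiation — from \eqref{eqn:pfaffian-differential-condition}, $\partial f/\partial x_i\in\Pf_n(\alpha,\beta+\alpha-1,r)$ whenever $f\in\Pf_n(\beta,\vec q)$ — so every Jacobian minor in sight is again a Pfaffian function w.r.t. $\vec q$, of degree at most $n(\beta+\alpha-1)$ and order $\le r$; and one uses Khovanski{\u\i}'s Theorem~\ref{thm:khovanskii} to make the description of $X^{\mathrm{reg}}$ effective and to bound the number of connected components of the resulting Pfaffian sets. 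One records the pieces of $X^{\mathrm{reg}}$ as strata; the remainder $X\setminus X^{\mathrm{reg}}$ is again semi-Pfaffian — obtained from $X$ by adjoining finitely many sign conditions on Jacobian minors of bounded complexity — and has dimension strictly less than $d$. Applying the sign-cell reduction to it and recursing, the procedure terminates after at most $n$ rounds since the dimension drops each time.

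\textbf{Quantitative control.} At each of the $\le n$ recursion levels the defining functions remain Pfaffian w.r.t. the fixed chain $\vec q$, so the order stays $\le r$ and the chain-degree stays $\le\alpha$, while the degree grows only by the linear-in-$\beta$ amount (with coefficient depending on $\alpha$ and $n$) incurred in passing to Jacobian minors, and the number of functions involved at each level is bounded in terms of $m$ and $n$; this gives the bound on the complexities, with the degree at most polynomial in $\beta$. For the number of connected components of the strata one applies the standard reduction of Betti-number estimates for Pfaffian sets to Theorem~\ref{thm:khovanskii}, each application of which contributes a factor polynomial in the degrees with exponent $O(r)$; combining these over the $O(1)$ levels yields a bound of the form $C_1\beta^{C_2}$ with $C_1=C_1(\alpha,r,m,n)$ and $C_2=C_2(r,n)$, as claimed. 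I expect the main obstacle to be the part I have glossed over, namely the effective, complexity-controlled description of the top-dimensional smooth locus $X^{\mathrm{reg}}$ of a basic semi-Pfaffian set as a basic, effectively nonsingular semi-Pfaffian set: a naïve description of "smooth point of dimension $d$" involves infinitely many derivative conditions, and it is precisely the differential-ring structure of Pfaffian functions together with Khovanski{\u\i}-type finiteness that lets one cut this down to finitely many conditions of bounded format. Everything else is bookkeeping of formats through the recursion, and the full details are in \cite{gabrielov2004complexity}.
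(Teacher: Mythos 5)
First, note that the paper does not prove this proposition at all: it is quoted verbatim (as Theorem 6.2 of \cite{gabrielov2004complexity}) and used as a black box, so there is no internal proof to compare against; the question is whether your sketch would stand on its own as a proof of the cited result.

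It does not, and the gap is exactly the one you flag at the end, but it is more serious than "bookkeeping left to the reference". Your dimension recursion rests on the claim that the regular locus $X^{\mathrm{reg}}$ of a basic set $X=\{g_1=\dots=g_s=0,\ h_1>0,\dots,h_t>0\}$ of dimension $d$ is covered by pieces on which some $(n-d)\times(n-d)$ minor of the Jacobian of the \emph{given} functions $g_i$ is nonvanishing, so that $n-d$ of the $g_i$ themselves furnish the effectively nonsingular description. This is false in general: the given equations need not cut out $X$ transversally at any point, and $d$ is not determined by $s$. For instance $g=(x_1^2+x_2^2-1)^2$ defines a smooth circle on which $\nabla g$ vanishes identically, so every Jacobian minor of the defining data is zero on all of $X$, your partition by "first nonvanishing minor" produces nothing, and the "remainder" $X\setminus X^{\mathrm{reg}}$ is not of strictly smaller dimension — the recursion never gets off the ground. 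The real content of the Gabrielov--Vorobjov theorem is precisely to manufacture \emph{new} defining functions (built from iterated partial derivatives of the $g_i$, with sign conditions added in a controlled way) that are effectively nonsingular on each stratum, together with the format bookkeeping that keeps their number and degrees bounded by $C_1\beta^{C_2}$ with the stated dependence of $C_1,C_2$ on $(\alpha,r,m,n)$ and $(r,n)$; deferring that construction to the reference means the proposal assumes the theorem it sets out to prove. The surrounding material (reduction to sign cells, closure of $\Pf_n(\cdot,\vec q)$ under differentiation with degree growth $\beta\mapsto\beta+\alpha-1$, and the use of Theorem \ref{thm:khovanskii} to bound connected components) is fine, but it is the easy part.
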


Next, we state a couple of well-known results in real algebraic geometry. The first is a bound on the total number of connected components in $\R^n$ of the complement of the zero set of a polynomial $P \in \R[X_1, \ldots, X_n]$.

\begin{theorem}[\cites{petrovskii1949topology,milnor1964betti,thom1965homologie}]
\label{thm:optm} For any $P \in \R[X_1, \ldots, X_n]$ of degree $D$, there exists a constant $C = C(n)$ such that \[b_0(\R^n \setminus Z(P)) \le CD^n.\]
\end{theorem}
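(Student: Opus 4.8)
The plan is to establish this classical estimate by Morse theory on a perturbed level set, along the lines of the arguments in \cites{petrovskii1949topology,milnor1964betti,thom1965homologie}. The main obstacle will be turning the system that cuts out the relevant critical points into a B\'ezout-type count of order $D^n$; everything else is bookkeeping with genericity.

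First I would reduce the problem. Since $\R^n\setminus Z(P)=\{P>0\}\sqcup\{P<0\}$, with both pieces open, and since replacing $P$ by $-P$ interchanges them, it suffices to bound $b_0(\{P>0\})$ by $O_n(D^n)$ (the case $Z(P)=\emptyset$ being trivial). Next I would smooth the boundary: by Sard's theorem almost every $\epsilon>0$ is a regular value of $P$, and since $\{P>0\}$ has finitely many connected components I may pick such an $\epsilon$ small enough that $\{P\ge\epsilon\}$ contains a point of each of them. Then $M:=Z(P-\epsilon)$ is a smooth hypersurface, $\{P\ge\epsilon\}$ is a closed $n$-manifold with boundary $M$, and $b_0(\{P>0\})\le b_0(\{P\ge\epsilon\})$, because two distinct components of $\{P>0\}$ cannot meet a common component of the smaller set $\{P\ge\epsilon\}$.

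Then I would count the components of $\{P\ge\epsilon\}$ using critical points of a proper function. Fix a generic $a\in\R^n$ with $P(a)\ne\epsilon$ and set $g(x)=\|x-a\|^2$; this is proper and bounded below, so on each connected component $C$ of the closed set $\{P\ge\epsilon\}$ it attains its minimum at some $p\in C$. If $p$ lies in the interior $\{P>\epsilon\}$ then $\nabla g(p)=0$, forcing $p=a$, which occurs for at most one component; otherwise $p$ lies on $M$, and since the part of $M$ making up $\partial C$ is relatively open in $M$, $p$ is a critical point of $g|_M$. Distinct components yield distinct such critical points, so $b_0(\{P\ge\epsilon\})\le 1+\#\,\mathrm{Crit}(g|_M)$. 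Finally I would bound $\#\,\mathrm{Crit}(g|_M)$: a point $x\in M$ is critical for $g|_M$ exactly when $x-a$ is proportional to $\nabla P(x)$ — both are nonzero on $M$, the former since $P(a)\ne\epsilon$, the latter since $\epsilon$ is a regular value — i.e.\ when the $2\times n$ matrix with rows $x-a$ and $\nabla P(x)$ has rank at most $1$. Hence $\mathrm{Crit}(g|_M)$ is the real algebraic set defined by $P-\epsilon$ together with the $2\times2$ minors $(x_i-a_i)\partial_j P-(x_j-a_j)\partial_i P$, all of degree at most $D$. By a standard transversality argument, for generic $a$ the function $g|_M$ is Morse, so this set is discrete, and a discrete real algebraic set is finite; a B\'ezout/Milnor--Thom estimate then bounds its cardinality by $C'(n)D^n$. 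Combining the pieces, $b_0(\{P>0\})\le 1+C'(n)D^n$, likewise for $\{P<0\}$, so $b_0(\R^n\setminus Z(P))\le C(n)D^n$ with $C$ depending only on $n$ (using $D\ge 1$).

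As flagged, the delicate step is the last one: the rank-$\le 1$ condition is a priori $\binom n2$ equations rather than $n$, so to extract a clean B\'ezout count one either uses the genericity of $a$ to replace it by the $n-1$ minors that already cut out the parallelism locus on the chart where a fixed coordinate of $x-a$ is nonzero, or one invokes directly the Milnor--Thom bound on the number of connected components of a real algebraic set defined by several polynomials of degree at most $D$. A secondary technical point is that $M$ and $\mathrm{Crit}(g|_M)$ may be unbounded; this causes no trouble because $g$ is \emph{proper}, so minima on unbounded components are still attained, and because a discrete real algebraic set — bounded or not — has only finitely many points.
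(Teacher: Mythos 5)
The paper does not prove this statement; it is cited as a classical result of Petrovskii--Oleinik, Milnor, and Thom, so there is no internal proof to compare against. Your reconstruction is the standard Milnor/Thom route: perturb to a regular level set, run Morse theory for a generic proper auxiliary function on the resulting smooth hypersurface, and bound the critical locus by a B\'ezout count. That approach is sound, and your degree bookkeeping ($P-\epsilon$ of degree $D$ together with $n-1$ minors of degree $\le D$ yields $D^n$) comes out right.

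Two remarks on the delicate last step you flag. First, your option (a) (localize to the chart where $(x-a)_1\neq 0$ and keep only the $n-1$ minors involving $\partial_1 P$) is the safe route; option (b) risks circularity unless the Milnor--Thom bound for algebraic sets has been established independently of the complement bound you are proving. Second, to actually invoke B\'ezout you need the critical points of $g|_M$ to be \emph{nondegenerate solutions of the algebraic system}, not merely nondegenerate critical points of $g|_M$ --- for generic $a$ these notions agree (a Sard-type argument), and then nondegenerate real solutions are isolated in the complexification, so the classical B\'ezout bound $D\cdot D^{n-1}$ applies. A small simplification worth noting: Milnor uses a generic linear projection rather than the distance-squared function, which replaces the rank-$\le 1$ locus by the cleaner system $P-\epsilon=\partial_2 P=\cdots=\partial_n P=0$ of degrees $D, D-1,\dots,D-1$ and avoids the $\binom{n}{2}$-minor issue altogether; your version works but carries this extra bookkeeping.
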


From Theorem \ref{thm:optm}, we have that the zero set of any $P \in \R[X_1, \ldots, X_n]$ breaks $\R^n$ into at most $\mathcal{O}_n(D^n)$ connected components. For $k < n$, given a $k$-dimensional real-algebraic set $\Pfaff$, the following theorem gives an upper bound on the number of connected components of $\R^n \setminus Z(P)$ that $\Pfaff$ intersects. We state the formulation of \cite[Theorem A.2]{solymosi2012incidence} which is implied by a more general result \cite[Theorem 4]{barone2016real}.

\begin{theorem}[{\cite[Theorem 4]{barone2016real}, \cite[Theorem A.2]{solymosi2012incidence}}]
\label{thm:barone-basu-ST}
Suppose that $\Pfaff \subseteq \R^n$ is a $k$-dimensional real-algebraic set defined by $m$ polynomial equations, each of degree at most $d$. Given any polynomial $P \in \R[X_1, \ldots, X_n]$ of degree at most $D$, then there is a constant $C = C(n, m, d)$ such that \[b_0\left(\Pfaff \cap (\R^n \setminus Z(P))\right) \le CD^k.\]
\end{theorem}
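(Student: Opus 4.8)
\emph{Strategy and reductions.}
My plan is to prove the bound by the classical polynomial Morse‑theory (critical‑point) method, organised so that the B\'ezout estimate is applied in a \emph{refined} way: the low‑degree polynomials cutting out $\Pfaff$ are multiplied against one another at most $\operatorname{codim}\Pfaff=n-k$ times, while the degree‑$D$ polynomial $P$ enters the count only $k$ times; this separation is exactly what produces the exponent $k$ rather than $n$. Replacing the $m$ defining polynomials $Q_1,\dots,Q_m$ of $\Pfaff$ by $Q:=Q_1^2+\dots+Q_m^2$, we may assume $\Pfaff=Z(Q)$ with $\deg Q\le 2d$. Next I would pass to a bounded, nonsingular model: there is a bounded nonsingular real algebraic set $\widetilde{\Pfaff}$ of dimension exactly $k$ — possibly after enlarging the ambient dimension by a constant — defined by polynomials of degree at most $D_0=D_0(n,d)$, \emph{independent of $D$}, together with a polynomial $\widetilde P$ of degree at most $D$, such that
\[
b_0\!\left(\Pfaff\setminus Z(P)\right)\ \le\ C_0(n,d)\cdot b_0\!\left(\widetilde{\Pfaff}\setminus Z(\widetilde P)\right).
\]
This reduction is the technical core: it is carried out by the standard infinitesimal ``bounding and smoothing'' deformation machinery for real algebraic sets, combined — when $\Pfaff$ is singular or is not a set‑theoretic complete intersection — with an induction on $\dim\Pfaff$ that peels off the lower‑dimensional singular locus, using that every connected component of $\Pfaff\setminus Z(P)$ contains a connected component of $(\Pfaff_{\mathrm{reg}}\setminus Z(P))\sqcup(\mathrm{Sing}(\Pfaff)\setminus Z(P))$. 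From here on I work on $\widetilde{\Pfaff}$.

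\emph{Morse step.}
Choose a small $\delta>0$ so that both $\delta$ and $-\delta$ are regular values of $\widetilde P|_{\widetilde{\Pfaff}}$ and $\delta<\min_{C}\sup_{C}|\widetilde P|$, the minimum over the finitely many connected components $C$ of $\widetilde{\Pfaff}\setminus Z(\widetilde P)$. Then $S:=\widetilde{\Pfaff}\cap\{\,|\widetilde P|\ge\delta\,\}$ is a compact $k$‑manifold with boundary $\big(\widetilde{\Pfaff}\cap Z(\widetilde P-\delta)\big)\sqcup\big(\widetilde{\Pfaff}\cap Z(\widetilde P+\delta)\big)$, each piece a compact nonsingular $(k-1)$‑manifold; since $S\subseteq\widetilde{\Pfaff}\setminus Z(\widetilde P)$ and every connected component of the latter meets $S$, we have $b_0(\widetilde{\Pfaff}\setminus Z(\widetilde P))\le b_0(S)$. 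For a generic linear functional $\ell$ on the ambient space, the maximum of $\ell$ on each connected component of $S$ is attained either at a critical point of $\ell|_{\widetilde{\Pfaff}}$ in the interior, or at a critical point of $\ell$ restricted to one of the two boundary hypersurfaces, and for generic $\ell$ all three critical sets are finite. Hence
\[
b_0\!\left(\Pfaff\setminus Z(P)\right)\ \le\ C_0(n,d)\Big(\#\operatorname{Crit}(\ell|_{\widetilde{\Pfaff}})+\#\operatorname{Crit}(\ell|_{\widetilde{\Pfaff}\cap Z(\widetilde P-\delta)})+\#\operatorname{Crit}(\ell|_{\widetilde{\Pfaff}\cap Z(\widetilde P+\delta)})\Big).
\]

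\emph{Refined B\'ezout count.}
The critical set of $\ell$ on the smooth codimension‑$(n-k)$ variety $\widetilde{\Pfaff}$ is $\widetilde{\Pfaff}$ intersected with the vanishing locus of the $(n-k+1)\times(n-k+1)$ minors of the Jacobian matrix with rows $\nabla\ell$ and the gradients of the defining polynomials of $\widetilde{\Pfaff}$; these minors have degree bounded by a function of $n$ and $d$ alone, so $\#\operatorname{Crit}(\ell|_{\widetilde{\Pfaff}})\le C_a(n,d)\le C_a(n,d)\,D^{k}$. For $W:=\widetilde{\Pfaff}\cap Z(\widetilde P-\delta)$ (smooth, codimension $n-k+1$), the critical set of $\ell|_W$ is $0$‑dimensional and contained in $\widetilde{\Pfaff}$; on the $k$‑dimensional variety $\widetilde{\Pfaff}$ it is cut out by the single hypersurface $Z(\widetilde P-\delta)$, of degree $\le D$, together with $k-1$ generic linear combinations of the relevant $(n-k+2)\times(n-k+2)$ Jacobian minors, each of which contains at most one row equal to a gradient of $\widetilde P$ and hence has degree at most $c_1(n,d)\,D$. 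Therefore, by the B\'ezout inequality (applied to the complex homogenizations, which bounds the number of real points),
\[
\#\operatorname{Crit}(\ell|_{W})\ \le\ \deg(\widetilde{\Pfaff})\cdot D\cdot\big(c_1(n,d)\,D\big)^{k-1}\ \le\ D_0^{\,n-k}\,c_1(n,d)^{k-1}\,D^{k}\ =\ C_b(n,d)\,D^{k},
\]
using $\deg(\widetilde{\Pfaff})\le D_0^{\,n-k}$; the same bound holds with $\widetilde P+\delta$ in place of $\widetilde P-\delta$. Combining, $b_0(\Pfaff\setminus Z(P))\le C_0(n,d)\big(C_a(n,d)+2C_b(n,d)\big)D^{k}=:C\,D^{k}$ with $C=C(n,m,d)$ — the dependence on $m$ entering only through the sum‑of‑squares step.

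\emph{Main obstacle.}
The delicate point is the reduction to the bounded nonsingular model of dimension exactly $k$ with degrees independent of $D$: projective closure preserves dimension and degree but not smoothness, whereas naive smoothing tends either to raise the dimension to $n$ (a thickening, which would give only the exponent $n$) or to lose connected components of $\Pfaff\setminus Z(P)$ when $\Pfaff$ is not a complete intersection. Simultaneously achieving boundedness, nonsingularity, dimension $k$, degree bounded in $n$ and $d$, and no loss of components is exactly where the infinitesimal‑deformation technology (together with the induction on $\dim$ for the singular case) is essential; everything afterwards — the Morse argument and the refined B\'ezout bookkeeping — is mechanical, modulo the routine task of making the choices of $\delta$, of $\ell$, and of the minor combinations generic at the same time, which one arranges by working over a real closed field with infinitesimals or by a standard dimension argument.
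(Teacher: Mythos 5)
The paper does not actually prove Theorem \ref{thm:barone-basu-ST}; it imports it from Barone--Basu and Solymosi--Tao, and its own new argument is for the Pfaffian generalization (Theorem \ref{thm:bezout-pfaffian}). Measured against what a self-contained proof requires, your proposal has a genuine gap, and it sits exactly where you flag the ``main obstacle''. The reduction you assert --- a bounded \emph{nonsingular} real algebraic model $\widetilde{\Pfaff}$ of dimension exactly $k$, defined by polynomials of degree $D_0(n,d)$ independent of $D$, with $b_0(\Pfaff\setminus Z(P))\le C_0\,b_0(\widetilde{\Pfaff}\setminus Z(\widetilde P))$ --- is not a standard, citable deformation lemma; it is essentially the hard core of Barone--Basu's theorem itself. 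Two concrete problems: (i) after the sum-of-squares step, $\nabla Q\equiv 0$ on $Z(Q)$, so your later minor computation needs $\widetilde{\Pfaff}$ to come equipped with $n-k$ defining equations whose gradients are independent along it (a nonsingular complete intersection presentation), which is strictly more than ``nonsingular algebraic set'' and is precisely what generic small deformations of degree bounded by $(n,d)$ do not obviously provide in dimension $k$ (the usual deformations $Z(Q-\eps)$ or $Z(\mathrm{Def}(Q,\zeta))$ give hypersurfaces, i.e.\ the thickening you want to avoid); (ii) the component-domination inequality must hold \emph{relative to $Z(P)$}, whose geometry degenerates as $D\to\infty$, so one must argue that the perturbation can be taken small with respect to $P$ while the degree of the model stays independent of $D$ --- this is where Barone--Basu's infinitesimal/real-closed-extension machinery and their induction over dimensions does real work, and ``peeling off the singular locus'' plus ``standard smoothing'' does not substitute for it. The Morse step and the refined B\'ezout bookkeeping (one factor $D$ from $\widetilde P\mp\delta$, $k-1$ factors $O(D)$ from minors containing one row of $\nabla\widetilde P$, constants from the variety) are the right shape, but they also need the routine-but-nontrivial check that the $k-1$ generic combinations of minors leave the critical points isolated in the intersection so that the B\'ezout inequality for isolated points applies.

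For comparison, the route the paper takes for its generalization avoids any global smoothing of $\Pfaff$: Lemma \ref{lem:part} decomposes the set into effectively nonsingular cells that are graphs of smooth maps over faces of a cube (with complexity independent of $D$), the components meeting cell boundaries are handled by induction on dimension, and on each cell one counts critical points of $P\mp\eps$ via a system of the form $\{$graph equations$\}\cup\{P\mp\eps=\partial P/\partial x_2=\cdots=\partial P/\partial x_k=0\}$, bounded by a Khovanski{\u\i}/B\'ezout count in which $P$ contributes only $k$ factors of $D$. If you replace your global ``bounded nonsingular model'' step by such a stratification/cell decomposition (in the algebraic case, a bounded-complexity decomposition into effectively nonsingular basic sets), the rest of your argument goes through essentially as you wrote it; as it stands, the proposal leaves the decisive step unproved.
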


The key to our results is a generalization of the above bound to semi-Pfaffian sets.

\subsection{Auxiliary Tools}
\label{sec:preliminaries-tools}
In this Section, we prove a few auxiliary results that we shall use in proving our main theorems as well as applications of our main theorems. 

Since Theorem \ref{thm:khovanskii} only gives a bound on the number of non-degenerate solutions of a Pfaffian system, we need the following corollary.

\begin{corollary}[of Theorem \ref{thm:khovanskii}] \label{cor:khovanskii}
Let $\vec{q}$ be a Pfaffian chain of order $r$ and chain-degree $\alpha$, where the functions in the Pfaffian chain depend only on $\xi \le n$ variables. Let $f_1, \ldots, f_n$ be Pfaffian functions on an open set $\mathcal{U} \subseteq \R^n$, where $f_i \in \Pf_n(\beta_i, \vec{q})$. Suppose we have that for all $i \in [n]$, $\mathrm{dim}\, Z(f_i) = n-1$ and $\mathrm{dim}\, Z(f_1, \ldots, f_i) = n - i$. Then the number of solutions of the system $M:= \{\x \in \mathcal{U} \suchthat f_1(\x) = \cdots = f_k(\x) = f_{k+1}(\x) = \cdots = f_n(\x) = 0\}$ is bounded from above by
\[2^{n + \binom{r+1}{2}}\beta_1 \cdots \beta_n\left(\min\{\xi, r\}\alpha + \beta_1 + \cdots + \beta_n - n + 1\right)^r.\]
\end{corollary}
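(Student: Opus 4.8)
The plan is to reduce the counting of all solutions (including degenerate ones) of the square Pfaffian system to a counting of non-degenerate solutions of a perturbed system, to which Theorem~\ref{thm:khovanskii} applies directly. The standard device is to replace each $f_i$ by $f_i - \eps_i$ for a suitable choice of regular values $\eps_i$, using a Sard-type argument: by Sard's theorem (in the $C^\infty$ setting, since Pfaffian functions are smooth) almost every small tuple $(\eps_1, \ldots, \eps_n)$ is a regular value of the map $(f_1, \ldots, f_n)\colon \mathcal{U} \to \R^n$ restricted to appropriate strata, so that the perturbed system $f_1 = \eps_1, \ldots, f_n = \eps_n$ has only non-degenerate solutions. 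The dimension hypotheses $\dim Z(f_1, \ldots, f_i) = n-i$ are exactly what guarantee that the zero set is, generically, a complete intersection, so that a small perturbation does not create components of excess dimension and the solution count cannot drop under the limit $\eps \to 0$; more precisely, each isolated solution of the original system persists (or splits into several isolated non-degenerate solutions of the perturbed system) by a local degree / continuity argument.

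Concretely, the steps I would carry out are: (i) observe that since $f_i \in \Pf_n(\beta_i, \vec q)$, the perturbed function $f_i - \eps_i$ is also in $\Pf_n(\beta_i, \vec q)$ — subtracting a constant does not change the degree in the $Y$-variables nor the chain — so Theorem~\ref{thm:khovanskii} applies to the perturbed system with the \emph{same} degree bounds $\beta_1, \ldots, \beta_n$; (ii) use the dimension conditions together with Sard's theorem to choose arbitrarily small $(\eps_1, \ldots, \eps_n)$ making the perturbed system have only non-degenerate zeros; (iii) argue that $\#\{$solutions of $M\} \le \liminf_{\eps \to 0} \#\{$non-degenerate solutions of the perturbed system$\}$ via a compactness/local-degree argument around each point of $M$ (each point of $M$ is isolated because $\dim Z(f_1, \ldots, f_n) = 0$); (iv) apply Theorem~\ref{thm:khovanskii} to bound the right-hand side by $2^{\binom{r}{2}}\beta_1 \cdots \beta_n(\min\{\xi,r\}\alpha + \beta_1 + \cdots + \beta_n - n + 1)^r$; (v) reconcile the bound in the statement, which carries the larger factor $2^{n + \binom{r+1}{2}}$ rather than $2^{\binom{r}{2}}$ — the extra $2^n \cdot 2^{\binom{r+1}{2} - \binom{r}{2}} = 2^{n+r}$ presumably absorbs the loss from the perturbation step, where one may need to count solutions of each of up to $2^n$ sign-perturbed systems (perturbing by $\pm\eps_i$) to ensure persistence of all real solutions, or to handle an inductive reduction through the chain $Z(f_1), Z(f_1,f_2), \ldots$.

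The step I expect to be the main obstacle is (iii): making rigorous the claim that no solutions are "lost" in the limit $\eps \to 0$. In the purely algebraic (polynomial) case this is handled by properness over $\mathbb{C}$ or by a careful real local-degree argument, but here the functions are only real-analytic (indeed only $C^\infty$, though Pfaffian functions are in fact analytic) and defined on an open set $\mathcal{U}$, so one must rule out solutions escaping to the boundary of $\mathcal{U}$ or to infinity. The dimension hypotheses are what one leans on: they force the zero sets $Z(f_1, \ldots, f_i)$ to be genuine complete intersections of the expected dimension, so a relative version of the inverse function theorem applied stratum-by-stratum keeps the perturbed solutions in a fixed compact neighbourhood of $M$. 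I would set this up by an induction on $i$: having controlled the (non-degenerate, in the appropriate sense) zero set of $f_1, \ldots, f_i$ as a smooth $(n-i)$-dimensional Pfaffian manifold, intersect it with $\{f_{i+1} = \eps_{i+1}\}$ and use Sard on the restriction of $f_{i+1}$ to this manifold. The final bookkeeping — verifying that the powers of $2$ in the corollary's bound indeed dominate whatever multiplicity factors the perturbation argument introduces — is routine but must be done carefully to match the stated constant.
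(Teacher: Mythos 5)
Your framework---perturb to a non-degenerate system, apply Theorem~\ref{thm:khovanskii}, pass to the limit---is the right one, and you correctly flag step (iii) as the crux. But your main plan has a genuine gap: replacing $f_i$ by $f_i - \eps_i$ and invoking a ``local degree / continuity argument'' for persistence does not work, because the local degree of a real map at a degenerate isolated zero can be zero. The one-variable example $f(x) = x^2$ already breaks it: $x = 0$ is an isolated zero but $\{x^2 = -\eps\}$ is empty for $\eps > 0$, so the zero is simply lost under a generic small linear perturbation. Since the corollary must count \emph{all} solutions, including degenerate ones, this is precisely the case that matters.

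The paper's fix is to perturb the \emph{squares}: consider $Z(f_1^2 - \eps_1, \ldots, f_n^2 - \eps_n)$ with $0 < \eps_n \ll \cdots \ll \eps_1 \ll 1$, and show by induction on $\ell$ that every point of $Z(f_1^2 - \eps_1, \ldots, f_\ell^2 - \eps_\ell)$ is regular. The inductive step is not Sard's theorem but a critical-value argument: the restriction $g_{\ell+1}$ of $f_{\ell+1}^2$ to the smooth $(n-\ell)$-dimensional manifold $Z(f_1^2-\eps_1, \ldots, f_\ell^2-\eps_\ell)$ attains its minimum ($=0$) at its critical points, so any sufficiently small positive $\eps_{\ell+1}$ is automatically a regular value. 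Squaring buys two things simultaneously: regular values are available arbitrarily close to $0^+$, and every isolated real zero of $f_1 = \cdots = f_n = 0$ is a limit of points of the perturbed set---exactly the injection you need to conclude $|M| \le \#Z(f_1^2-\eps_1,\ldots,f_n^2-\eps_n)$. Applying Theorem~\ref{thm:khovanskii} to the squared system, whose degrees are $2\beta_i$, yields a factor $2^n$ from $\prod(2\beta_i)$ and a factor $2^r$ from the degree sum inside the $r$-th power, which with $2^{\binom{r}{2}}$ gives $2^{n + \binom{r+1}{2}}$. Your aside in step (v) about $\pm\eps_i$ sign-perturbations is this idea in disguise, since $Z(f_i^2 - \eps_i^2) = Z(f_i - \eps_i) \cup Z(f_i + \eps_i)$; it should be the backbone of the argument, executed \emph{before} the persistence step, not a post-hoc reconciliation of the constant.
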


\begin{proof}[Proof of Corollary \ref{cor:khovanskii}]
First we introduce the following definition from \cite[Definition 1.7]{gabvorobapprox}. Let ${\mathcal P}= {\mathcal P}(\eps_1, \ldots ,\eps_\ell)$ be a predicate (property) over
$(0,1)^\ell$. We say that $\mathcal P$ {\em holds for} $0 < \eps_\ell \ll \eps_{\ell-1} \ll \cdots \ll \eps_1 \ll 1$ if there exist functions $t_k:\> (0,1)^{k-1} \to (0,1)$, definable in real analytic structure $\R_{an}$ (see for e.g. \cite[Section 2.1.2]{binyamininovikovicm}), where $k=1, \ldots, \ell$ (with $t_1$ being a positive constant) such that
$\mathcal P$ holds for any sequence $\eps_1, \ldots ,\eps_\ell$ satisfying
$0< \eps_k <t_k(\eps_{k-1}, \ldots ,\eps_1)$.

For real numbers $\eps_1, \ldots, \eps_n$ consider the set $Z(f_1^2- \eps_1, \ldots, f_n^2-\eps_n)$.
We prove, by induction on $\ell$, that for $0< \eps_\ell \ll \cdots \ll \eps_1 \ll 1$ all solutions
$x$ in $Z(f_1^2- \eps_1, \ldots, f_\ell^2-\eps_\ell)$ are regular, i.e., the matrix
$[\nabla f_1^2 ({\x}), \ldots ,\nabla f_\ell^2 ({\x})]$ has full rank, and tend to
solutions in $Z(f_1, \ldots , f_\ell)$ as $\eps_i \to 0$.

The base case of the induction, i.e. when $\ell=1$, is obvious.

Now, for $\ell < n$, assume that all points in $Z(f_1^2-\eps_1, \ldots ,f_\ell^2-\eps_{\ell})$ are regular.
Consider the restriction $g_{\ell+1}$ of $f_{\ell+1}^2$ to $Z(f_1^2-\eps_1, \ldots ,f_\ell^2-\eps_{\ell})$.
Note, that $Z(f_1^2-\eps_1, \ldots ,f_\ell^2-\eps_{\ell}, g_{\ell+1})$ is $(n-\ell-1)$-dimensional
and  the function $g_{\ell+1}$
attains the minimum ($ = 0$) on $Z(f_1^2-\eps_1, \ldots ,f_\ell^2-\eps_{\ell})$ at its critical points.
Thus, $g_{\ell+1}-\eps_{\ell+1}$, for small $\eps_{\ell+1}>0$, does not have critical points on
$Z(f_1^2-\eps_1, \ldots ,f_\ell^2-\eps_{\ell})$ with value $0$, hence all points in
$Z(f_1^2-\eps_1, \ldots ,f_\ell^2-\eps_{\ell}, f_{\ell+1}^2-\eps_{\ell + 1})$  are regular.

Finally, applying Theorem \ref{thm:khovanskii} to $Z(f_1^2-\eps_1, \ldots , f_n^2- \eps_n)$, noticing that the degrees of $f_i^2 - \eps_i$ are $2\beta_i$, completes the proof.
\end{proof}

\begin{notation}
    For any semi-Pfaffian set $\Pfaff \subset \R^n$, let $\smooth{\Pfaff}$ be the subset of all points $x \in \Pfaff$ such that the neighbourhood of $x$ in $\Pfaff$ is a smooth manifold.
\end{notation}

Next, Corollary \ref{cor:smooth-decomposition} and Corollary \ref{cor:irreducible-components} below use the stratification result in Proposition \ref{prop:strat} to decompose a Pfaffian set into smooth sets, and also provides some quantitative results on the topology of Pfaffian sets.

\begin{corollary}[of Proposition \ref{prop:strat}]
\label{cor:smooth-decomposition}
Let $\vec{q}$ be a Pfaffian chain of chain-degree $\alpha$ and order $r$. Let $\Pfaff \subseteq \R^n$ be a $k$-dimensional Pfaffian set defined by at most $m$ Pfaffian functions from $\Pf_n(\beta, \vec{q})$. Then there exist a constant $C = C(n, m, \alpha, \beta, r)$ such that one can cover $\Pfaff$ as
\[\Pfaff = \bigcup_{i=1}^{C} \smooth{\Pfaff_i},\]
where each $\Pfaff_i \subseteq \Pfaff$ is a Pfaffian set of dimension at most $k$ defined by at most $C$ Pfaffian functions
from $\Pf_n(C, \vec{q})$.
\end{corollary}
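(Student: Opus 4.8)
The plan is to peel off the smooth locus of $\Pfaff$ inductively on dimension, using Proposition~\ref{prop:strat} at each stage to produce the covering with controlled complexity. First I would apply Proposition~\ref{prop:strat} to $\Pfaff$ itself: this gives a finite basic weak stratification $\Pfaff = \bigsqcup_j \Pfaff_j$ into smooth (basic, effectively nonsingular) semi-Pfaffian strata, where the number of strata and the complexity (number of defining Pfaffian functions, their degrees, chain-degree) of each stratum are bounded by $C_1\beta^{C_2}$ for constants depending only on $n, m, \alpha, r$. Each stratum $\Pfaff_j$ is smooth as a manifold and is a \emph{relatively open} subset of a Pfaffian set, but to match the statement I need to realize each $\Pfaff_j$ (or at least cover $\Pfaff$ by sets of the form) as $\smooth{\Pfaff_i}$ where $\Pfaff_i$ is a genuine Pfaffian set of dimension $\le k$. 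The key observation is that a basic semi-Pfaffian stratum of codimension $c$, defined with $c$ functions $h_1,\dots,h_c$ vanishing on it with linearly independent gradients, together with strict inequalities $u_1>0,\dots,u_t>0$, is an open subset of the Pfaffian set $Z(h_1,\dots,h_c)$; moreover by effective nonsingularity $Z(h_1,\dots,h_c)$ is smooth of dimension $n-c \le k$ near the stratum, so the stratum is contained in $\smooth{Z(h_1,\dots,h_c)}$. Taking $\Pfaff_i := Z(h_1,\dots,h_c)$ for each stratum, I get $\Pfaff \subseteq \bigcup_i \smooth{\Pfaff_i}$, and since each $\Pfaff_i \subseteq \Pfaff$ fails — wait, $Z(h_1,\dots,h_c)$ need not be contained in $\Pfaff$ — so instead I should intersect back: replace $\Pfaff_i$ by $Z(h_1,\dots,h_c) \cap \Pfaff$, or argue that one may just as well decompose $\Pfaff$ by induction on dimension, removing at each step the top-dimensional smooth part and iterating on the (lower-dimensional, still semi-Pfaffian with controlled complexity) remainder.

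Concretely, the cleaner route is induction on $k = \dim \Pfaff$. For $k = 0$ the set $\Pfaff$ is a finite set of points (with cardinality bounded via Khovanskiı/Proposition~\ref{prop:strat}), and $\Pfaff = \smooth{\Pfaff}$ trivially. For the inductive step, apply Proposition~\ref{prop:strat} to $\Pfaff$; the union $S$ of all strata of dimension exactly $k$ is an open dense subset of $\Pfaff$ and is contained in $\smooth{\Pfaff}$, so $S \subseteq \smooth{\Pfaff}$ and I may take $\Pfaff$ itself (or, to stay inside the required form, the union of the closures $\overline{\Pfaff_j}$ of the top strata, each a Pfaffian set of dimension $k$) as the first batch of $\Pfaff_i$'s. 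The complement $\Pfaff \setminus S = \bigsqcup_{j:\dim \Pfaff_j < k}\Pfaff_j$ is a semi-Pfaffian set of dimension $\le k-1$, and crucially its complexity is bounded by $C_1\beta^{C_2}$, so the inductive hypothesis applies with $\beta$ replaced by a quantity still bounded in terms of the original parameters. Unwinding the recursion, after at most $k+1$ steps — with the complexity bound being raised to a bounded power $(C_2)^{k+1}$ and multiplied by bounded constants at each step — one obtains the covering $\Pfaff = \bigcup_{i=1}^{C}\smooth{\Pfaff_i}$ with $C = C(n,m,\alpha,\beta,r)$, each $\Pfaff_i$ a Pfaffian set (from the list of zero sets $Z(h_1,\dots,h_c)$ produced by the stratifications, intersected with $\Pfaff$ if one insists on $\Pfaff_i \subseteq \Pfaff$) of dimension $\le k$ defined by at most $C$ Pfaffian functions from $\Pf_n(C,\vec q)$.

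The main obstacle — really the only subtle point — is bookkeeping the complexity through the recursion and reconciling the two notions present in Proposition~\ref{prop:strat}: the strata are semi-Pfaffian (with both equations and strict inequalities) and relatively open, whereas the statement wants honest Pfaffian sets $\Pfaff_i$ and their smooth loci $\smooth{\Pfaff_i}$. The resolution is exactly the effective-nonsingularity clause: a codimension-$c$ basic stratum is cut out, among its defining functions, by $c$ functions with independent gradients, so it sits inside the smooth locus of the Pfaffian set defined by just those $c$ functions; the ambient strict inequalities only carve out an open piece of that smooth locus, which is harmless since we are allowed to pass to $\smooth{\Pfaff_i}$ and to \emph{cover} rather than partition. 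One must also check that discarding the strict inequalities does not inflate the dimension — it cannot, since the stratum has dimension $n-c$ and $Z(h_1,\dots,h_c)$ has dimension exactly $n-c$ near the stratum by the independence of gradients, and globally $\dim Z(h_1,\dots,h_c) \le n - c \le k$ is not guaranteed, so to be safe one replaces $\Pfaff_i$ by $Z(h_1,\dots,h_c)\cap\Pfaff$, which is a Pfaffian set (intersection of a Pfaffian set with $\Pfaff$; if $\Pfaff$ is basic this is immediate, and in general one decomposes $\Pfaff$ into basic pieces first, again via Proposition~\ref{prop:strat}, at bounded cost) of dimension $\le k$ contained in $\Pfaff$, with $\smooth{\Pfaff_i}$ still containing the original stratum.
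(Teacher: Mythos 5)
Your proposal is correct and follows essentially the same route as the paper: apply Proposition~\ref{prop:strat}, define $\Pfaff_i$ as the intersection of $\Pfaff$ with the zero set of the equational part of each stratum, and use effective nonsingularity to conclude that each stratum lies inside $\smooth{\Pfaff_i}$. You in fact spell out the subtle step (why effective nonsingularity forces the stratum into the smooth locus of the associated Pfaffian set) more explicitly than the paper does, while your worry about $\Pfaff$ possibly not being basic is moot, since $\Pfaff$ is assumed to be a genuine Pfaffian set, so intersecting it with a zero set is again a Pfaffian set with no preliminary decomposition needed.
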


\begin{proof}
Using Proposition \ref{prop:strat}, perform a basic weak stratification of $\Pfaff$ to obtain strata $S_1, \ldots, S_C$. Each stratum $S_i$ is represented as a set of points in $\R^n$ satisfying a system of equations $E_{i}$, and strict inequalities $I_{i}$. Define $\Pfaff_i$ to be the intersection of the set defined by $E_{i}$ and $\Pfaff$. Then we claim that $\Pfaff_i$ are Pfaffian sets, and that the $\smooth{\Pfaff_i}$ form the required covering of $\Pfaff$. 

Indeed, for all $i \in [C]$, $\Pfaff_i \subseteq \Pfaff$, so $\bigcup_{i=1}^C \smooth{X_i} \subseteq \Pfaff$. Also, for any $x \in \Pfaff$, there exists $S_i$ such that $x \in S_i$. Since $S_i \subseteq \smooth{\Pfaff_i} \subseteq \Pfaff$, we have that $x \in \smooth{\Pfaff_i} \subseteq \bigcup_{i=1}^C \smooth{\Pfaff_i}$, proving that $\Pfaff \subseteq \bigcup_{i=1}^C \smooth{\Pfaff_i}$, and consequently, we have \[ \Pfaff = \bigcup_{i=1}^C \smooth{\Pfaff_i},\] as required. 

Also, it is true that $\mathrm{dim} \, \Pfaff_i \le k$, and the conditions required on the complexity of $\Pfaff_i$ follow from the guarantee of Proposition \ref{prop:strat}.
\end{proof}

\begin{corollary}[of Proposition \ref{prop:strat}]
\label{cor:irreducible-components}
Let $\vec{q}$ be a Pfaffian chain of chain degree $\alpha$ and order $r$, and let $P \in \Pf_n(\beta, \vec{q})$. There exist constants $C_1=C_1(\alpha, \beta, r, n)$ and $C_2=C_2(r,n)$ such that the number of irreducible (over $\R$) components of $Z(P)$ is at most $C_1 \beta^{C_2}$.
\end{corollary}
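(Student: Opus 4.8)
The plan is to bound the number of irreducible components of $Z(P)$ by the number of connected components of its smooth locus, and then to control that number via Proposition \ref{prop:strat}. Write $V := Z(P)$. Since $P$ is a single Pfaffian function in $\Pf_n(\beta, \vec q)$, every auxiliary set constructed below will have format bounded in terms of $n, \alpha, \beta, r$ only, which is what the statement asks for. We may assume $P \not\equiv 0$ (so $V \neq \R^n$) and $V \neq \emptyset$, since otherwise $V$ is irreducible and there is nothing to prove.

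The first step is to show that the number of irreducible (over $\R$) components of $V$ is at most $b_0(\smooth{V})$. To each $x \in \smooth{V}$ I attach an irreducible component $W(x)$ of $V$: near $x$ the set $V$ is a connected smooth analytic submanifold $M$ of $\R^n$, hence an irreducible analytic germ; the irreducible components of $V$ through $x$ cover $M$, so at least one of them, $W$, has full local dimension at $x$ and thus contains $M$ locally, and such a $W$ is unique because two candidates would agree with $M$ near $x$, hence with one another on a nonempty open set, forcing equality. The assignment $x \mapsto W(x)$ is locally constant on $\smooth{V}$, hence constant on connected components, and it is surjective onto the set of irreducible components: for a component $W$, the set $W \setminus \bigcup_{W' \neq W} W'$ is open and dense in $W$ (a finite union of proper closed subsets of an irreducible set is proper), it meets the open dense smooth locus of $W$, and at such a point $x$ one has $V = W$ near $x$, so $x \in \smooth{V}$ and $W(x) = W$. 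Hence the number of irreducible components of $V$ is at most $b_0(\smooth{V})$.

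It remains to bound $b_0(\smooth{V})$, and this is where I would invoke the Pfaffian machinery. The smooth locus $\smooth{V}$ (equivalently, the complement in $V$ of the singular locus, taken dimension by dimension) is a semi-Pfaffian set of format bounded in terms of $n, \alpha, \beta, r$: its defining conditions are built from $P$ together with minors of Jacobian-type matrices assembled from $P$ and the derivatives of the chain, and differentiating $P(\x, q_1, \ldots, q_r)$ via \eqref{eqn:pfaffian-differential-condition} again yields Pfaffian functions with respect to the same chain $\vec q$ of degree bounded in terms of $\alpha, \beta$, so no new chain functions are introduced. Applying Proposition \ref{prop:strat} to $\smooth{V}$ bounds the number of connected components of the strata in a basic weak stratification of it, hence $b_0(\smooth{V})$ itself, by $C_1\beta^{C_2}$ with $C_1 = C_1(\alpha, \beta, r, n)$ and $C_2 = C_2(r, n)$; combined with the first step, this proves the corollary (absorbing a harmless factor of $n$ into $C_1$).

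The main obstacle is really the bookkeeping rather than any single hard idea. On the geometric side, over $\R$ an irreducible component need not be pure-dimensional and may carry lower-dimensional solitary pieces that do not occur over $\C$, so the argument in the first step must be routed through the local irreducibility of a smooth connected germ rather than through naive dimension counts. On the algebraic side, one must be careful that $\smooth{V}$, and the intermediate loci used to describe it, genuinely remain semi-Pfaffian of format depending only on $n, \alpha, \beta, r$; the content there is precisely the effective differentiation of Pfaffian functions, which keeps the chain fixed and raises the degree by an explicit amount.
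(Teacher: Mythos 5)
Your first step — that each irreducible component $W$ of $V=Z(P)$ meets $\smooth{V}$ in a point where $V=W$ locally, so the number of irreducible components is at most $b_0(\smooth V)$ — is a correct and cleanly argued reduction, and it is essentially the (implicit) surjectivity step of the paper's argument. The difficulty is the second step, where you assert that $\smooth V$ is itself a semi-Pfaffian set of format bounded in $n,\alpha,\beta,r$ and then apply Proposition~\ref{prop:strat} to it.

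That assertion is a genuine gap. The description you offer (``$P$ together with minors of Jacobian-type matrices'') cuts out the \emph{regular} locus $\{P=0,\ \nabla P\text{ has suitable rank}\}$, not the smooth locus $\smooth V$ of Notation~2.13, and the two differ in general: take $P(x,y)=y^2$, where $V=\{y=0\}$ is everywhere smooth so $\smooth V=V$, while the regular locus is empty. If one replaced $\smooth V$ by the regular locus in your first step, the reduction would fail, precisely because the regular locus can miss entire components. So one really does need $\smooth V$, and it is not obvious that $\smooth V$ is semi-Pfaffian at all, let alone with format controlled by $n,\alpha,\beta,r$: its natural description (``there is a neighbourhood of $x$ in $V$ that is a smooth manifold'') involves quantifiers, and the Pfaffian structure has no quantifier elimination — the paper itself points out in \S\ref{sec:projection-pfaffian} that projections of semi-Pfaffian sets (sub-Pfaffian sets) need not be semi-Pfaffian. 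Your remark about iterating ``dimension by dimension'' is gesturing at the right construction, but carrying it out while controlling format is nontrivial work that your proof does not do.

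The paper sidesteps this entirely. It applies Proposition~\ref{prop:strat} directly to $Z(P)$, obtaining a basic weak stratification into smooth, effectively nonsingular strata whose number of connected components is already bounded by $C_1\beta^{C_2}$. It then singles out the \emph{maximal} strata (those not lying in the boundary of a higher-dimensional stratum) and argues by analytic continuation that each connected component of a maximal stratum lies in exactly one irreducible component of $Z(P)$. This gives the bound without ever needing $\smooth V$ to be semi-Pfaffian: the stratification provides bounded-format smooth pieces for free. If you want to salvage your route, replace ``$b_0(\smooth V)$'' by ``the number of connected components of maximal strata of a basic weak stratification of $V$'' and rerun your density/local-irreducibility argument; that gives essentially the paper's proof and avoids the problematic semi-Pfaffian claim.
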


\begin{proof}
Applying Proposition \ref{prop:strat} to $Z(P)$, we conclude that the number of connected components of smooth strata
in the stratification of $Z(P)$ does not exceed $C_1 \beta^{C_2}$, where both $C_1$ and $C_2$ are the constants coming from Proposition \ref{prop:strat} itself.

Let $Z(Q)$ be an irreducible component of $Z(P)$ having dimension $d$. We have that there is a connected component $Y$ of a $d$-dimensional stratum such that $\dim (Z(Q) \cap Y) = d$, i.e., the function $Q$ vanishes on a $d$-dimensional disc in $Y$. By the identity theorem (see for instance \cite[Section 1.2]{krantz2002primer}), $Q$ vanishes on all of $Y$, hence $Y \subseteq Z(Q)$. Thus, it follows that the number of irreducible components of $Z(P)$ is at most the number of connected components of smooth strata of $Z(P)$. Consequently, $C_1\beta^{C_2}$ is an upper bound on the number of irreducible components.
\end{proof}

\section{Polynomial and Pfaffian Partitioning of a Set of Semi-Pfaffian Sets}
In this section, we prove our main theorem, a partitioning theorem for a set of semi-Pfaffian sets (Theorem \ref{thm:poly-part-guth}).

\subsection{Smooth decomposition of semi-Pfaffian sets}
The first step towards Theorem \ref{thm:poly-part-guth} is an upper bound on the number of connected components of $\Pfaff \cap (\R^n \setminus Z(P))$ for a semi-Pfaffian set $\Pfaff$ and a polynomial $P$ (Theorem \ref{thm:bezout-pfaffian}). The following lemma describes a procedure to obtain a smooth decomposition of semi-Pfaffian sets using the weak smooth stratification result stated in Proposition \ref{prop:strat}.

\begin{lemma}
\label{lem:part}

Let $\mu>0$ and let $\Pfaff \subseteq (-\mu, \mu)^n \subseteq \R^n$ be a semi-Pfaffian set of order $r$. Then there exists a partition of $\Pfaff$ into semi-Pfaffian subsets, called cells, such that each $n$-dimensional cell is an open set in $(-\mu, \mu)^n$, while each
$k$-dimensional set for $k < n$ is the graph of a smooth map ${\bf f}=(f_1, \ldots ,f_{n-k})$ defined on an open set in one of the $k$-dimensional faces of $(-\mu, \mu)^n$. Moreover, each cell has order $r$, and is \emph{effectively non-singular} (see Definition \ref{defn:weak-stratification}). The number of cells, the number of functions defining cells (open sets and graphs), and the complexities of cells are bounded from above by an explicit function of the complexity of $\Pfaff$.
\end{lemma}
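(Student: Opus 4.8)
The plan is to upgrade the basic weak stratification of Proposition~\ref{prop:strat} into the stronger ``cell-like'' decomposition asserted here, where lower-dimensional strata are not merely smooth semi-Pfaffian sets but actual graphs of smooth maps over open subsets of the coordinate faces of the box $(-\mu,\mu)^n$. First I would apply Proposition~\ref{prop:strat} to $\Pfaff$ itself, but that alone is not enough, because a stratum of dimension $k$ need not project diffeomorphically onto any single coordinate $k$-plane. The standard remedy is a generic change of coordinates followed by a projection argument: for each smooth effectively non-singular stratum $S$ of codimension $n-k$, the set of points of $S$ where the projection $\pi_I\colon \R^n\to\R^I$ onto a given subset $I$ of $k$ coordinates fails to be a local diffeomorphism is itself a lower-dimensional semi-Pfaffian set (it is the locus where the relevant $(n-k)\times(n-k)$ minor of the Jacobian of the defining functions $h_1,\dots,h_{n-k}$, augmented by the coordinates not in $I$, vanishes). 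Intersecting $S$ with these ``bad loci'' over all choices of $I$ and recursing on dimension, one partitions $S$ into pieces each of which does project diffeomorphically onto an open subset of some coordinate $k$-face; on such a piece the implicit function theorem expresses it as the graph of a smooth ${\bf f}=(f_1,\dots,f_{n-k})$. Finally I would intersect the resulting cells with the open faces of the box (rather than working in all of $\R^n$) and discard the parts lying on lower-dimensional faces, handling those recursively, so that every cell lives over an \emph{open} subset of a single $k$-dimensional face of $(-\mu,\mu)^n$.

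Next I would track the bookkeeping needed for the three structural claims. That each cell has order $r$: every operation used (restriction of a Pfaffian function to a stratum, passing to Jacobian minors, taking graphs via the implicit function theorem) stays within the class of Pfaffian functions defined with respect to the same chain $\vec q$ — restriction of a Pfaffian function to an effectively non-singular Pfaffian set is again Pfaffian with respect to (an extension of) $\vec q$, as in~\cite{gabrielov2004complexity}, and the order does not increase beyond $r$ once one is careful about which chain is used; this is exactly the content invoked in Proposition~\ref{prop:strat} and Corollary~\ref{cor:smooth-decomposition}. That each cell is effectively non-singular: the codimension-$(n-k)$ cells come equipped, by construction through the basic stratification, with $n-k$ defining Pfaffian functions whose gradients are linearly independent at every point, and the further subdivision by Jacobian-minor loci preserves this (one simply adds coordinate functions $x_j$ for $j\notin I$ to the defining system, whose gradients are the standard basis vectors and hence independent of the rest on the diffeomorphic piece). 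That the $n$-dimensional cells are open: these are exactly the top-dimensional strata of the stratification of the open set $\Pfaff\cap(-\mu,\mu)^n$ inside the ambient open box, so they are open in $(-\mu,\mu)^n$.

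For the quantitative claim I would simply note that at each stage the number of new pieces, the number of defining functions, and their formats (chain-degree, degree, order) are all bounded by explicit functions of the data at the previous stage — Proposition~\ref{prop:strat} gives the first bound, the Jacobian minors have degree bounded in terms of $\alpha,\beta$ and $n$, and the recursion on dimension has depth at most $n$ — so composing these $O(n)$ times yields a bound depending only on the complexity of $\Pfaff$. I would phrase this loosely, matching the style of Proposition~\ref{prop:strat}, rather than computing the exact constants.

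The main obstacle I expect is the projection/diffeomorphism step: ensuring that after subdividing by the bad loci one genuinely gets, on each remaining piece, a \emph{global} graph over an open subset of a coordinate face (not just a local one), and that the subdivision terminates with pieces that are still basic semi-Pfaffian and effectively non-singular with the right chain. This requires care in choosing, for each piece, a single coordinate subset $I$ that works on the whole piece — achieved by first decomposing according to which of the finitely many minors is non-vanishing, then restricting to a connected component — and in checking that the graphing functions $f_i$, obtained via the implicit function theorem from the Pfaffian defining equations, are themselves Pfaffian of controlled format (again a result from~\cite{gabrielov2004complexity}). The reconciliation with the faces of the box, so that the domain of ${\bf f}$ is open in a $k$-face rather than in $\R^k$, is a routine but slightly fiddly additional stratification by the coordinate hyperplanes $x_j=\pm\mu$.
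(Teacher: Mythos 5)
Your proposal follows essentially the same route as the paper's proof: apply the weak smooth stratification of Proposition~\ref{prop:strat}, subdivide each $k$-dimensional stratum by the vanishing loci of the $(n-k)\times(n-k)$ Jacobian minors of the defining functions $h_1,\dots,h_{n-k}$, use the implicit function theorem on each piece where a minor is nonzero to represent it as (a disjoint union of) graphs over a coordinate $k$-face, and recurse on the lower-dimensional remainder. One small slip worth flagging: the bad locus $\{x\in S : \det\mathbf{J}_I(x)=0\}$ for a single index set $I$ need \emph{not} be lower-dimensional in $S$ (this is precisely why the paper re-stratifies each $\mathcal{C}_{i_1,\dots,i_{n-k}}$ before passing to the next minor), but your plan still goes through because effective non-singularity forces the intersection over \emph{all} $I$ of the bad loci to be empty, so partitioning $S$ by ``first nonvanishing minor in the chosen ordering'' exhausts $S$ with smooth semi-Pfaffian pieces.
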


\begin{proof}
The proof is by induction on $\dim \Pfaff=k$. If $k=0$, then $\Pfaff$ consists of a finite number of points, and each of these constitutes a cell that satisfies the required properties (the bound on the number of points is provided by Theorem~\ref{thm:khovanskii}). Assume $k>0$.
By Proposition \ref{prop:strat}, there exists a weak, smooth stratification of $\Pfaff$ into a finite number of basic semi-Pfaffian sets. Let $\Pfaff_{<k}$ be the union of all strata of dimension less than $k$. 

Consider a stratum $\mathcal{C}$ with $\dim \mathcal{C}=k$. We first show that we can subdivide $\mathcal{C}$ into strata that can be represented as graphs, and later argue that the complexity of the functions defining the graphs can be controlled in terms of the parameters defining the semi-Pfaffian set. Since the stratification is basic,  
there exist Pfaffian functions $h_1,\dots,h_{n-k}$ such that $h_i|_{\mathcal{C}}=0$ and $\mathrm{d}h_1\wedge \cdots \wedge \mathrm{d}h_{n-k}\neq 0$ on $\mathcal{C}$.
For $\{ i_1, \ldots, i_m \} \subset [n]$, define the minor
\begin{equation*}
{\bf J}(i_1, \ldots, i_m ):=
\left( \begin{array}{ccc}
\partial h_1/\partial x_{i_1} & \cdots & \partial h_1 / \partial x_{i_m}\\
\vdots & \ddots & \vdots \\
\partial h_{n-k}/ \partial x_{i_1} & \cdots & \partial h_{n-k}/ \partial x_{i_m}
\end{array} \right).
\end{equation*}
Note that ${\bf J}(1,2, \ldots, n )$ has rank $n-k$ on $\mathcal{C}$. 

Fix some total order (for example, lexicographic) on the family of all subsets of $\{1, 2, \ldots ,n \}$ of cardinality $n-k$.
Suppose, for definiteness, that the minimal set, with respect to this order, is $\{ 1, 2, \ldots , n-k\}$.

Define
\begin{equation*}
\mathcal{C}_{1, \ldots, n-k} := \mathcal{C} \cap \{ \det {\bf J}(1, \ldots, n-k)=0 \}.
\end{equation*}
Observe that $\mathcal{C} \setminus \mathcal{C}_{1, \ldots, n-k}$ is either empty or $k$-dimensional, while $\dim \mathcal{C}_{1, \ldots, n-k} \le k$
(possibly empty).
If $\dim \mathcal{C}_{1, \ldots, n-k} < k$, then reset $\Pfaff_{<k}:= \Pfaff_{<k} \cup \mathcal{C}_{1, \ldots, n-k}$.
Otherwise, perform a weak stratification of $\mathcal{C}_{1, \ldots, n-k}$, reset $\Pfaff_{<k}$ by taking the union of current $\Pfaff_{<k}$ with strata of dimension less than $k$, and denote by $\widehat{\mathcal{C}}_{1, \ldots, n-k}$ the $k$-dimensional stratum.

Assume that we constructed the smooth $k$-dimensional $\widehat{\mathcal{C}}_{j_1, \ldots, j_{n-k}}$ for a set $\{ j_1, \ldots ,j_{n-k} \}$
in the chosen ordering of subsets of $\{1, 2, \ldots ,n \}$.
Let $\{ i_1, \ldots i_{n-k} \}$ be the next set in the ordering.
Define
\begin{equation*}
\mathcal{C}_{i_1, \ldots, i_{n-k}} := \widehat{\mathcal{C}}_{j_1, \ldots, j_{n-k}} \cap \{ \det {\bf J}(i_1, \ldots, i_{n-k})=0 \}.
\end{equation*}
Observe that $\widehat{\mathcal{C}}_{j_1, \ldots, j_{n-k}} \setminus \mathcal{C}_{i_1, \ldots, i_{n-k}}$ is either empty or $k$-dimensional,
while $\dim \mathcal{C}_{i_1, \ldots, i_{n-k}} \le k$ (possibly empty).

If $\dim \mathcal{C}_{i_1, \ldots, i_{n-k}} < k$, then reset $\Pfaff_{<k}:= \Pfaff_{<k} \cup \mathcal{C}_{i_1, \ldots, i_{n-k}}$.
Otherwise, perform a weak stratification of $\mathcal{C}_{i_1, \ldots, i_{n-k}}$, reset $\Pfaff_{<k}$ by taking the union of current $\Pfaff_{<k}$
with strata of dimension less than $k$, and denote by $\widehat{\mathcal{C}}_{i_1, \ldots, i_{n-k}}$ the $k$-dimensional stratum.

Continue this process by passing from one subset in the chosen ordering of subsets of $\{1, 2, \ldots ,n \}$ to the next.
The process will terminate if either the current $\mathcal{C}_{i_1, \ldots, i_{n-k}}$ has dimension less than $k$, or is empty.
The latter case occurs when for the previous set $\{ j_1, \ldots , j_{n-k} \}$ the stratum $\widehat{\mathcal{C}}_{j_1, \ldots, j_{n-k}}$
does not have points with vanishing $\det {\bf J}(i_1, \ldots, i_{n-k})$.
The process always terminates since effective non-singularity of $\mathcal{C}$ implies that the rank of the $(n-k) \times n$ - matrix
${\bf J}(1, \ldots n)$ is maximal at every point of $\mathcal{C}$.

By the implicit function theorem, each non-empty set $\widehat{\mathcal{C}}_{j_1, \ldots, j_{n-k}} \setminus \mathcal{C}_{i_1, \ldots, i_{n-k}}$
is a disjoint union of graphs of smooth maps defined on their projections to the $k$-dimensional face of $(-\mu, \mu)^n$ in the subspace
of coordinates $x_{\ell_1}, \ldots , x_{\ell_k}$, where
\[\{ \ell_1, \ldots , \ell_k \}= \{ 1, \ldots n \} \setminus \{ i_1, \ldots , i_{n-k} \}.\]
Hence, in the terminating ordered sequence of subsets of $\{1, 2, \ldots ,n \}$ the sets
\begin{equation*}
(\mathcal{C} \setminus \mathcal{C}_{1, \ldots, n-k}), \ldots , (\widehat{\mathcal{C}}_{j_1, \ldots, j_{n-k}} \setminus \mathcal{C}_{i_1, \ldots, i_{n-k}})
\end{equation*}
are disjoint unions of $k$-dimensional graphs of smooth maps, while the complement of their union
in $\mathcal{C}$ has a dimension less than $k$. Applying the inductive hypothesis to $\Pfaff_{<k}$, we obtain a partition of $\Pfaff$ into graphs of smooth maps.

In order to see that the functions $f_i$ provided by the implicit function theorem are Pfaffian functions whose parameters can be bounded in terms of the equations $h_1,\dots,h_k$ defining the original strata, note that from the implicit function theorem we can extract an expression for the partial derivatives $\partial f_i/\partial x_j$ in terms of the partial derivatives of the $h_i$. Complexities of functions $h_i$ are bounded from above by Proposition \ref{prop:strat}.
\end{proof}

\subsection{Proofs of Theorem \ref{thm:bezout-pfaffian} and Theorem \ref{thm:poly-part-guth}}
\label{sec:main-proofs-subsection}

We now have all the ingredients required to prove Theorem \ref{thm:bezout-pfaffian} and Theorem \ref{thm:poly-part-guth}.

\begin{proof}[Proof of Theorem \ref{thm:bezout-pfaffian}]
For all sufficiently large integers $\mu>0$, the number of connected components of $\Pfaff \cap (\R^n \setminus Z(P))$ does not exceed (in fact, coincides with) the number of connected components of $\Pfaff \cap (-\mu, \mu)^n \cap (\R^n \setminus Z(P))$. Therefore, without loss of generality, we will assume in this proof that $\Pfaff \subseteq (-\mu, \mu)^n$ for some $\mu>0$. Apply Lemma \ref{lem:part} to get a partition $\mathcal{C} = \{\mathcal{C}_1, \ldots, \mathcal{C}_v\}$ of $\Pfaff$. We will establish that there is a constant $C' = C'(n, k, m, \alpha, \beta, r)$ such that, for any $\mathcal{C}_i \in \mathcal{C}$,  \begin{equation} \label{eqn:to-prove} b_0(\mathcal{C}_i \setminus Z(P)) \le C'D^{k},\end{equation} thus completing the proof because, by Lemma \ref{lem:part}, we have that $|\mathcal{C}|$ does not grow with $D$.

We proceed by induction on $k$. The base case of $k = 0$ is trivial, so suppose that $k \ge 1$, and that \eqref{eqn:to-prove} is proved for all $\Pfaff$ of dimension strictly less than $k$. Fix a $k$-dimensional smooth cell $\mathcal{C}_i$. For this specific $\mathcal{C}_i$, let $\alpha_1, \ldots, \alpha_{t}$ be the connected components of $\{\vecx \in \mathcal{C}_i \suchthat P(\vecx) \neq 0\}$. Obviously this means that \begin{equation} \label{eqn:target-m} b_0(\mathcal{C}_i \setminus Z(P)) \le t.\end{equation} We might as well assume that all $\alpha_i$ do not intersect the boundary $\partial \mathcal{C}_i$, because the number of components that intersect $\partial \mathcal{C}_i$ is at most $\mathcal{O}_{n, \partial \mathcal{C}_i}(D^{k-1})$ by the induction hypothesis. Naturally by the continuity of $P$, on any particular $\alpha_j$, we will either have $\forall \vecx \in \alpha_j$, $P(\vecx) > 0$, or, $\forall \vecx \in \alpha_j$, $P(\vecx) < 0$. Consequently, $P$ is guanteed to have at least one critical point on each $\alpha_j$.

Define $A := \{i \in [t] \suchthat \forall \vecx \in \alpha_i, P(\vecx) > 0 \}$. Choose $\eps > 0$ to be a regular value of $P$ that is strictly smaller than the minimum of all the critical values of $P$ on all $\alpha_j$, $j \in A$, which are all obviously positive because of how $A$ is defined. Since $\eps$ is positive and is also strictly smaller than the minimum critical value of $P$, for all $j \in A$, $Z(P - \eps) \cap \alpha_j \neq \emptyset$. Also, since $\eps$ is a regular value of $P$, by the implicit function theorem, $Z(P - \eps)$ is smooth. Given that by Lemma \ref{lem:part}, all $\alpha_j$, $j \in [t]$, are also smooth, we can conclude that $Z(P - \eps) \cap \alpha_j$ is smooth. $Z(P - \eps) \cap \alpha_j$ is also guaranteed to be compact because $Z(P - \eps)$ cannot touch $\partial \alpha_j$ for all $j \in A$. Our goal will be to find an upper bound on $|A|$. Notice that $[t] \setminus A$ is exactly the set $\{i \in [t] \suchthat \forall \vecx \in \alpha_i, P(\vecx) < 0 \}$. Thus by analysing similarly as what is above with $\eps < 0$, and looking at $P + \eps$ instead of $P - \eps$, we will be able to obtain an upper bound on the cardinality of the complement of $A$ in $[t]$; summing up both upper bounds will give an upper bound on $t$.

By the definition of the partition in Lemma~\ref{lem:part}, $\mathcal{C}_i$ is the graph of a smooth map ${\bf f}_i=(f_{i,1}, \ldots f_{i, n-k})$
defined on the projection of $\mathcal{C}_i$ on a $k$-dimensional face $F$ of $(-\mu, \mu)^n$.
Assume, for definiteness, that $F$ has coordinates $x_1, \ldots , x_k$.
Then, $Z(P - \eps) \cap \mathcal{C}_i$ is $(k-1)$-dimensional smooth Pfaffian set, and it's projection
${\rm proj}\ (Z(P - \eps) \cap \mathcal{C}_i)$ on $F$ is a {\em smooth embedding}.
In particular, ${\rm proj}\ (Z(P - \eps) \cap \mathcal{C}_i)$ is a smooth compact manifold.

Consider the subset \[G:=Z\left(P- \eps, \nicefrac{\partial P}{\partial x_2}, \ldots , \nicefrac{\partial P}{\partial x_k}\right)\] of $Z(P- \eps)$, i.e., the set of points in $Z(P - \eps)$ at which the gradient of $P$ is collinear to $x_1$-axis. At any point ${\bf x} \in G \cap \mathcal{C}_i$ the gradient of $P$ projects injectively on the line, normal to
$\mathrm{proj}(Z(P - \eps) \cap \mathcal{C}_i)$ at ${\rm proj} ({\bf x})$.
It may happen, that ${\rm proj}\ (G \cap \mathcal{C}_i)$ has infinite number of points (in case when this set
consists of critical points of the Gauss map of the manifold ${\rm proj}\ (Z(P - \eps) \cap \mathcal{C}_i)$).
Sard's theorem implies that using a generic rotation of coordinates $x_1, \ldots , x_k$ we can
guarantee that ${\rm proj}\ G  \cap \mathcal{C}_i$ contains only regular points of the Gauss map,
and therefore, is finite.
As a result, the pre-image of this set under the embedding, $G \cap \mathcal{C}_i$, is also finite.

The set $G \cap \mathcal{C}_i$ is defined by the system of equations
\[\left\{\text{graph of }{\bf f}_i, P - \eps =  \frac{\partial P}{\partial x_2} = \ldots = \frac{\partial P}{\partial x_{k}} = 0\right\}
.\]
Recall from Lemma \ref{lem:part} that the graph of ${\bf f}_i$, which is just $\mathcal{C}_i$, is an intersection of an open set, say $U_i$, and a set of solutions of a system of Pfaffian equations. The domain of these Pfaffian equations must obviously be larger than $U_i$. Thus we use Corollary \ref{cor:khovanskii} to obtain an upper bound on $|G \cap \mathcal{C}_i|$; by doing so, we are able to conclude that the number of solutions of this system is at most $C''D^{k+r}$, for some constant $C'' = C''(n, k, m, \alpha, \beta, r)$ that comes from applying Corollary \ref{cor:khovanskii}.

As argued above, $Z(P - \eps) \cap \alpha_j$, for all $j \in [A]$, is compact; in other words, each connected compoment in $\{\alpha_j\}_{j \in A}$ properly contains at least one component of $Z(P - \eps)$. Thus at least one point of $G \cap \mathcal{C}_i$ is sure to be present in each of $\{\alpha_i\}_{i \in A}$. This means that \[|A| \le C''D^{k+r}.\] The quantitative results above would be exactly the same if we looked at $P + \eps$, for some $\eps < 0$, instead of $P - \eps$, so $C''D^{k+r}$ is an upper bound on $\left|[t] \setminus A\right|$ as well. This means that $2C''D^{k+r}$ is an upper bound on $t$, and thus by \eqref{eqn:target-m}, the proof is complete by choosing $C'$ appropriately.
\end{proof}



Using Theorem \ref{thm:bezout-pfaffian}, we are able to prove Theorem \ref{thm:poly-part-guth}. In fact, we are able to prove Theorem \ref{thm:poly-part-bbz}, which is a generalization of Theorem \ref{thm:poly-part-guth}.

\begin{theorem}[Polynomial partitioning of collections of semi-Pfaffian sets - generalization of Theorem \ref{thm:poly-part-guth}-\ref{point:normal-poly-part}]
\label{thm:poly-part-bbz}
Let $m \ge 1$ be an integer, and for each $i \in [m]$, let $\Gamma_i$ be a finite set of $k_i$-dimensional semi-Pfaffian sets all in $\R^n$, where each $\Pfaff \in \Gamma_i$ is defined by at most $h_i$ Pfaffian functions each of degree at most $\beta_i$, chain-degree at most $\alpha_i$, and order at most $r_i$. Then for any $D \ge 1$, there exists a non-zero polynomial $P \in \R[X_1, \ldots, X_n]$ of degree at most $D$, and constants $C_i = C_i(n, k_i, h_i, \alpha_i, \beta_i, r_i)$ such that 
\begin{enumerate}[label=(\Roman*),ref=(\Roman*)]
    \item \label{pfaffpp-collection:dim-at-least-1} for all $i \in [m]$ with $k_i > 0$, each connected component of $\R^n \setminus Z(P)$ intersects at most $C_im\frac{|\Gamma_i|}{D^{n-k_i-r_i}}$ elements of $\Gamma_i$, and
    \item  \label{pfaffpp-collection:dim-0} for all $i \in [m]$ with $k_i = 0$, each connected component of $\R^n \setminus Z(P)$ intersects at most $C_im\frac{|\Gamma_i|}{D^n}$ elements of $\Gamma_i$.
\end{enumerate}
\end{theorem}

Theorem \ref{thm:poly-part-bbz} can be proved by following the exact same strategy as in \cite{blagojevic2017polynomial}, but by using Theorem \ref{thm:bezout-pfaffian} at the appropriate place. We reproduce the main part of the argument in \cite{blagojevic2017polynomial} in Appendix \ref{appendix:poly-part-bbz} to make our document self-contained.

\begin{proof}[Proof of Theorem~\ref{thm:poly-part-guth}]
Part \ref{point:normal-poly-part} is just the special case of Theorem~\ref{thm:poly-part-bbz} with $m=1$.

For Part \ref{point:pfaffian-poly-part}, recall that we assume that all the Pfaffian functions involved are defined with respect to the same Pfaffian chain $(q_1, \ldots, q_r)$. To every $\Pfaff\in \Gamma$ we associate a semi-Pfaffian set $\Pfaff'\subset \R^{n+r}$ as follows. Let $\Pfaff$ be defined by Pfaffian functions $P_i(\x) = Q_i(\x, q_1(\x), \ldots, q_r(\x))$, $i \in [m]$. To $\Pfaff$, we associate $\Pfaff'\subset \R^{n+r}$, where $\Pfaff'$ is the set defined by the polynomials $Q_i\in \R[X_1, \ldots, X_n, Y_1, \ldots, Y_r]$ intersected with $Z(y_1-q_1(\x), \ldots, y_r-q_r(\x))$ (that is, the intersection of a semi-algebraic set with the graph of the Pfaffian chain). In other words, letting $\mathcal{G} \subseteq \R^{n+r}$ denote the graph of the map 
\[
\R^n \ni \x \mapsto (q_1(\x), \ldots, q_r(\x)) \in \R^{r}, 
\]
it is clear that $\Pfaff'$ is obtained by restricting the domain of $\mathcal{G}$ to $\Pfaff$. 

Let $\Gamma'$ denote the collection of the associated semi-algebraic sets $\Pfaff'$. Each $\Pfaff'$ has dimension $k$, so we can apply Part \ref{point:normal-poly-part} to infer the existence of a polynomial $P\in \R[X_1, \ldots, X_n, Y_1, \ldots, Y_r]$ of degree at most $D$ such that each connected component of $\R^{n+r}\setminus Z(P)$ intersects at most $C'D^{k+r-(n-r)}|\Gamma'|=C'D^{k-n}|\Gamma|$ elements of $\Gamma'$. Specifically, since all $\Gamma' \ni \Pfaff' \subseteq \mathcal{G}$, the guarantee of Part \ref{point:normal-poly-part} is equivalent to saying that each connected component of $\mathcal{G} \setminus (\mathcal{G} \cap Z(P))$ contains at most $C'D^{k-n}|\Gamma|$ elements of $\Gamma'$. 

Define $P'(\x) := P(\x, q_1(\x), \ldots, q_r(\x)) \in \Pf_n(\alpha, \beta, r)$. $\mathcal{G}$ is homotopic to an open set in $\R^n$, thus the projection of $\mathcal{G}$ onto $\R^n$ carries each $\Pfaff'$ to $\Pfaff$, and also carries $\mathcal{G} \cap Z(P)$ to $Z(P')$. In other words, intersections of the various $\Pfaff \in \Gamma$ with connected components of $\R^n\setminus Z(P')$ bijectively correspond to intersections of the corresponding $\Pfaff' \in \Gamma'$ with connected components of $\mathcal/{G} \setminus (\mathcal{G} \cap Z(P))$. Since each connected component of $\mathcal{G} \setminus (\mathcal{G} \cap Z(P))$ contains at most $C'D^{k-n}|\Gamma|$ elements of $\Gamma'$, the bijectivity of the projection from $\mathcal{G}$ onto $\R^n$ ensures that each connected component of $\R^n \setminus Z(P')$ contains at most $C'D^{k-n}|\Gamma|$ elements of $\Gamma$.

It remains to prove that $P'$ is non-zero. By the guarantee of Theorem \ref{thm:poly-part-guth}-\ref{point:normal-poly-part}, $P$ is non-zero. Since $P \in \R[X_1, \ldots, X_n, Y_1, \ldots, Y_r]$, we can also consider that $P \in \R[Y_1, \ldots, Y_r][X_1, \ldots, X_n]$, i.e. that $P$ consists of monomials in the variables $X_1, \ldots X_n$ with coefficients from $\R[Y_1, \ldots, Y_r]$. Since $P$ is non-zero, at least one of the coefficients is non-zero, and since we have assumed that $\vec{q}$ is an algebraically independent chain, we have that no coefficients of P will become identically zero when we substitute all $Y_i$ by $q_i$. Thus we have that $P'$ will have at least one non-zero coefficient, proving that $P'$ is non-zero.
\end{proof}

By following the exact same strategy as in the proof of Theorem \ref{thm:poly-part-guth} - \ref{point:pfaffian-poly-part} and by using Theorem \ref{thm:poly-part-bbz} instead of Theorem \ref{thm:poly-part-guth} - \ref{point:normal-poly-part}, we get the following theorem, whose proof we only sketch.

\begin{theorem}[Pfaffian partitioning of collections of semi-Pfaffian sets, - generalization of Theorem \ref{thm:poly-part-guth}-\ref{point:pfaffian-poly-part}]
\label{thm:pfaff-part-bbz}
Let $\vec{q} = (q_1, \ldots, q_r)$ be an algebraically independent Pfaffian chain of order $r$ and chain-degree $\alpha$. Let $m \ge 1$ be an integer, and for each $i \in [m]$, let $\Gamma_i$ be a finite set of $k_i$-dimensional semi-Pfaffian sets all in $\R^n$, where each $\Pfaff \in \Gamma_i$ is defined by at most $h_i$ functions from $\Pf_n(\beta_i, \vec{q})$.  For any $D \ge 1$, there is a non-zero Pfaffian function $P' \in \Pf_n(D, \vec{q})$, and constants $C_i = C_i(n, k_i, h_i, \alpha, \beta_i, r)$ such that 
\begin{enumerate}[label=(\Roman*),ref=(\Roman*)]
    \item for all $i \in [m]$ with $k_i > 0$, each connected component of $\R^n \setminus Z(P')$ intersects at most $C_im\frac{|\Gamma_i|}{D^{n - k_i}}$ elements of $\Gamma_i$, and
    \item for all $i \in [m]$ with $k_i = 0$, each connected component of $\R^n \setminus Z(P')$ intersects at most $C_im\frac{|\Gamma_i|}{D^{n+r}}$ elements of $\Gamma_i$.
\end{enumerate}
\end{theorem}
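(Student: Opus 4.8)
The plan is to mimic the proof of Theorem~\ref{thm:poly-part-guth}-\ref{point:pfaffian-poly-part} essentially verbatim, the only change being that the multi-collection polynomial partitioning statement Theorem~\ref{thm:poly-part-bbz} replaces the single-collection statement Theorem~\ref{thm:poly-part-guth}-\ref{point:normal-poly-part}. Concretely, I would lift every semi-Pfaffian set occurring in every $\Gamma_i$ to the graph of the common chain $\vec{q}$ inside $\R^{n+r}$, apply Theorem~\ref{thm:poly-part-bbz} \emph{simultaneously} to all the lifted collections there so as to produce a single partitioning polynomial, and then pull it back to a Pfaffian function on $\R^n$.

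In detail: since every Pfaffian function defining an element of any $\Gamma_i$ is taken with respect to $\vec{q} = (q_1, \ldots, q_r)$, write each $\Pfaff \in \Gamma_i$ as the locus of a Boolean combination of conditions on functions $P_{i,\ell}(\x) = Q_{i,\ell}(\x, q_1(\x), \ldots, q_r(\x))$ with $Q_{i,\ell} \in \R[X_1, \ldots, X_n, Y_1, \ldots, Y_r]$ of degree at most $\beta_i$. Let $\mathcal{G} \subseteq \R^{n+r}$ be the graph $Z(Y_1 - q_1(\x), \ldots, Y_r - q_r(\x))$ of the map $\x \mapsto (q_1(\x), \ldots, q_r(\x))$, and associate to $\Pfaff$ the set $\Pfaff' \subseteq \R^{n+r}$ cut out by the same Boolean combination of the $Q_{i,\ell}$, intersected with $\mathcal{G}$. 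Then $\Pfaff'$ is a semi-Pfaffian set in $\R^{n+r}$ of dimension $k_i$ and order $r$, defined by at most $h_i + r$ Pfaffian functions with respect to $\vec{q}$ whose degree and chain-degree are bounded in terms of $\beta_i$ and $\alpha$, and $|\Gamma_i'| = |\Gamma_i|$ for $\Gamma_i' := \{\Pfaff' : \Pfaff \in \Gamma_i\}$, since the projection $\pi|_{\mathcal{G}}$ below is a bijection. Apply Theorem~\ref{thm:poly-part-bbz} in $\R^{n+r}$ to $\Gamma_1', \ldots, \Gamma_m'$: for any $D \ge 1$ it yields a non-zero $P \in \R[X_1, \ldots, X_n, Y_1, \ldots, Y_r]$ of degree at most $D$ and constants $C_i = C_i(n, k_i, h_i, \alpha, \beta_i, r)$ such that each connected component of $\R^{n+r} \setminus Z(P)$ meets at most $C_i m |\Gamma_i| / D^{(n+r) - k_i - r}$ elements of $\Gamma_i'$ when $k_i > 0$, and at most $C_i m |\Gamma_i| / D^{n+r}$ elements when $k_i = 0$. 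The only computation is the identity $(n+r) - k_i - r = n - k_i$, which makes the exponents agree with those in the statement; and since every $\Pfaff' \in \Gamma_i'$ lies in $\mathcal{G}$, the same bounds hold with $\R^{n+r} \setminus Z(P)$ replaced throughout by $\mathcal{G} \setminus (\mathcal{G} \cap Z(P))$.

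Finally, set $P'(\x) := P(\x, q_1(\x), \ldots, q_r(\x)) \in \Pf_n(D, \vec{q})$. The coordinate projection $\pi \colon \R^{n+r} \to \R^n$ restricts to a homeomorphism $\mathcal{G} \to \R^n$ (its inverse being the graph map), carrying each $\Pfaff'$ to $\Pfaff$ and $\mathcal{G} \cap Z(P)$ to $Z(P')$; hence $\pi$ induces a bijection between the connected components of $\R^n \setminus Z(P')$ and those of $\mathcal{G} \setminus (\mathcal{G} \cap Z(P))$ that preserves, for each $i$, the incidences with members of $\Gamma_i$ and of $\Gamma_i'$ respectively. Transporting the bounds of the previous step through $\pi$ gives exactly the two claimed quantitative guarantees. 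That $P'$ is non-zero is argued exactly as in Theorem~\ref{thm:poly-part-guth}-\ref{point:pfaffian-poly-part}: $P \ne 0$ by Theorem~\ref{thm:poly-part-bbz}, and writing $P$ as a polynomial in $X_1, \ldots, X_n$ with coefficients in $\R[Y_1, \ldots, Y_r]$, some coefficient $c$ is non-zero, so algebraic independence of $\vec{q}$ forces $c(q_1(\x), \ldots, q_r(\x)) \not\equiv 0$ and hence $P' \ne 0$.

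I do not expect a genuine obstacle: the argument is a transcription of the $m = 1$ case with $\Gamma$ replaced by the tuple $(\Gamma_1, \ldots, \Gamma_m)$ and Theorem~\ref{thm:poly-part-guth}-\ref{point:normal-poly-part} replaced by Theorem~\ref{thm:poly-part-bbz}. The points deserving care are the bookkeeping of the complexity parameters through the lifting --- in particular, checking that $\Pfaff'$ has Pfaffian order exactly $r$, which is precisely what turns the naive exponent $(n+r) - k_i$ one would read off from semi-algebraic polynomial partitioning in $\R^{n+r}$ into the sharper $n - k_i$ --- and checking that both the Boolean-combination structure and the dimension $k_i$ survive the lifting, which is immediate from $\pi|_{\mathcal{G}}$ being a homeomorphism.
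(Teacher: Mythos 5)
Your proposal is correct and is exactly the argument the paper intends: the authors explicitly state that Theorem~\ref{thm:pfaff-part-bbz} follows by repeating the lift-to-the-graph argument from the proof of Theorem~\ref{thm:poly-part-guth}-\ref{point:pfaffian-poly-part}, with Theorem~\ref{thm:poly-part-bbz} substituted for the single-collection Part~\ref{point:normal-poly-part}, which is precisely what you carry out (including the exponent bookkeeping $(n+r)-k_i-r = n-k_i$ and the non-vanishing of $P'$ via algebraic independence of $\vec{q}$).
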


\begin{proof}[Proof sketch of Theorem \ref{thm:pfaff-part-bbz}]
Just as in the proof of Theorem \ref{thm:poly-part-guth}-\ref{point:pfaffian-poly-part}, we lift from $\R^n$ each collection of semi-Pfaffian sets $\Gamma_i$ to $\R^{n+r}$ to form $\{\Gamma_i'\}_{i \in [m]}$ where each $\Gamma_i'$ is a collection of $k_i$-dimensional semi-Pfaffian sets in $\R^{n+r}$. We now apply Theorem \ref{thm:poly-part-bbz} to find a polynomial $P \in \R[X_1, \ldots, X_n, Y_1, \ldots, Y_r]$ of degree at most $D$ such that 
\begin{enumerate}[label=(\Roman*),ref=(\Roman*)]
    \item for all $i \in [m]$ with $k_i > 0$, each connected component of $\R^{n+r} \setminus Z(P)$ intersects at most $\sim \frac{|\Gamma_i'|}{D^{n-k_i}}$ elements of $\Gamma_i'$, and
    \item  for all $i \in [m]$ with $k_i = 0$, each connected component of $\R^{n+r} \setminus Z(P)$ intersects at most $\sim \frac{|\Gamma_i'|}{D^{n+r}}$ elements of $\Gamma_i'$.
\end{enumerate}

Next, again as in the proof of Theorem \ref{thm:poly-part-guth}-\ref{point:pfaffian-poly-part}, we turn $P$ into a Pfaffian function 
\[
P'(\vecx) = P(\vecx, q_1(\vecx), \ldots, q_r(\vecx)) \in \Pf_n(D, \vec{q}),
\]
and $P'$ satisfies the required guarantees on the collection $\{\Gamma_i\}_{i \in [m]}$.
\end{proof}

\newcommand{\sztr}{Szemer\'edi-Trotter}

\section{Applications}
\label{sec:applications}

In this section, we derive some applications of our main theorems. Section \ref{sec:applications-theoremstatements-sztr} and Section \ref{sec:applications-theoremstatements-joints} contain definitions and theorem statements, Section \ref{sec:applications-discussion} contains some discussion on finer technical details with regard to the results, and Section \ref{sec:applications-proofs} contains proofs of the stated theorems.

\subsection{Szemer\'edi-Trotter Type Theorems}
\label{sec:applications-theoremstatements-sztr}

Given a set $\PP$ of points in $\R^n$, and a set $\Gamma$ of subsets of $\R^n$, the set of \emph{incidences} between $\PP$ and $\Gamma$ is defined as 
\begin{equation*}
    I(\PP, \Gamma) := \left\{(p, \gamma) \in \PP \times \Gamma \suchthat p \in \gamma\right\}.
\end{equation*}
The classical Szemer\'edi-Trotter Theorem~\cite{szemereditrotter} proves a bound of $O(|\PP|^{2/3}|\Gamma|^{2/3}+|\PP|+|\Gamma|)$ on the number of incidences when $|\Gamma| \subseteq \R^2$ consists purely of lines. Under certain conditions on the incidence graph (see~\cites{pachsharir1998pointscurves,clarkson1990combinatorial} or~\cite[Chapter 3]{sheffer2022polynomial}), this bound has been generalized to the case of algebraic curves. In \cite{solymosi2012incidence}, incidences between points and higher dimensional varieties are studied under specific non-degeneracy assumptions; we refer to \cite{sheffer2022polynomial} for many more such theorems when $\Gamma$ contains algebraic sets. Also, as mentioned earlier, \cites{basu2018minimal,chernikov2020cutting} derive o-minimal/distal Szemer\'edi-Trotter type theorems.

The \emph{incidence graph} of $\Gamma$ and $\PP$, denoted $G_{\PP, \Gamma}$, is a bipartite graph $G_{\PP, \Gamma} = (V_{\PP} \cup V_{\Gamma}, E)$, where the vertices $V_{\PP}$ corresponding to points in $\PP$ and the vertices $V_{\Gamma}$ correspond to elements of $\Gamma$, and $(v_p, v_{\gamma}) \in E$ if $p \in \gamma$. 
Let $K_{s,t}$ denote the complete bipartite graph with $s$ vertices on one side and $t$ vertices on the other. The following Theorem counts incidences between points and Pfaffian curves in $\R^n$ under some non-degeneracy assumptions.

\begin{theorem}
\label{thm:st-rn}
Let $m, \alpha, r, \beta$, $n \ge 3$, and $s, t \ge 2$, be integers, and let $\eps > 0$. Then there exist constants $c_j = c_j(\eps, n, m, \alpha, \beta, r, s, t)$, for $1 < j < n$, such that the following holds. Let $\vec{q}$ be an algebraically independent Pfaffian chain of order $r$ and chain-degree $\alpha$. Let $\PP \subseteq \R^n$ be a set of points, and let $\Gamma \subseteq \R^n$ be a set of distinct irreducible Pfaffian curves such that each $\gamma \in \Gamma$ is defined by at most $m$ Pfaffian functions from $\Pf_n(\beta, \vec{q})$. Suppose that $G_{\PP, \Gamma}$ does not contain any copy of $K_{s,t}$. Also assume that, for all $1 < j < n$, every $j$-dimensional irreducible Pfaffian set defined by Pfaffian functions from $\Pf_n(c_j, \vec{q})$ contains at most $\theta_j$ curves of $\Gamma$. Then there exist constants $C_1 = C_1(\eps, n, m, \alpha, \beta, r, s, t)$ and $C_2 = C_2(\eps, n, m, \alpha, \beta, r, s, t)$ such that
\[
|I(\PP, \Gamma)| \le C_1 |\PP|^{\frac{s(r+1)}{s(n+r)-n+1} + \eps}|\Gamma|^{\frac{(s-1)(n+r)}{s(n+r) -n + 1}} + |\PP|\sum_{j=2}^{n-1} \theta_j + C_2 (|\Gamma| + |\PP|).
\]
\end{theorem}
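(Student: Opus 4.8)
The plan is to follow the standard polynomial-partitioning strategy for Szemer\'edi--Trotter type bounds (as in \cite[Chapter~3]{sheffer2022polynomial} or \cite{solymosi2012incidence}), but using the Pfaffian partitioning theorem (Theorem~\ref{thm:pfaff-part-bbz}) with \emph{constant-degree} partitioning functions, together with the semi-Pfaffian Barone--Basu bound (Theorem~\ref{thm:bezout-pfaffian}) and the bound on irreducible components (Corollary~\ref{cor:irreducible-components}). First I would dispose of trivial incidences: the $K_{s,t}$-freeness of $G(\PP,\Gamma)$ gives, via the K\H{o}v\'ari--S\'os--Tur\'an theorem, a bound $|I(\PP,\Gamma)| \le C(|\PP|^{1-1/s}|\Gamma| + |\Gamma|)$, which already handles the regime where $|\Gamma|$ is large compared to a suitable power of $|\PP|$; so we may assume $|\Gamma|$ is not too large relative to $|\PP|$.

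The heart of the argument is a partitioning/induction step. Choosing a degree parameter $D$ (to be optimized at the end as a function of $|\PP|$ and $|\Gamma|$), apply Theorem~\ref{thm:pfaff-part-bbz} to $\Gamma$ (a collection of $1$-dimensional semi-Pfaffian sets, so $k=1$) to obtain a non-zero $P' \in \Pf_n(D,\vec q)$ such that each connected component of $\R^n\setminus Z(P')$ meets at most $\sim |\Gamma|/D^{n-1}$ curves of $\Gamma$. Split $\PP = \PP_0 \sqcup \PP_1$, where $\PP_1$ is the set of points lying off $Z(P')$ and $\PP_0 = \PP\cap Z(P')$. For incidences with $\PP_1$: distribute the points of $\PP_1$ among the cells; by Theorem~\ref{thm:bezout-pfaffian} each $\gamma\in\Gamma$ meets at most $\sim D^{1+r}$ cells (here $k=1$), so summing a per-cell K\H{o}v\'ari--S\'os--Tur\'an bound over the $\sim D^n$ cells and balancing yields a term of the shape $|\PP|^{1-1/s}$ times a power of $D$ times $|\Gamma|$-contributions, plus lower-order terms; choosing $D$ to balance the two main contributions produces the claimed exponent $\frac{s(r+1)}{s(n+r)-n+1}$ on $|\PP|$ and $\frac{(s-1)(n+r)}{s(n+r)-n+1}$ on $|\Gamma|$, with the $\eps$ loss absorbing the constant-factor slack between the $\sim D^{n-1}$ of the partition and the $\sim D^{1+r}$ of Barone--Basu, and the fact that $D$ must be an integer. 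For incidences with $\PP_0 \subseteq Z(P')$: here is where the non-degeneracy hypotheses on the $\theta_j$ enter. Decompose $Z(P')$ into its irreducible components using Corollary~\ref{cor:irreducible-components}; a curve $\gamma\in\Gamma$ is either \emph{fully contained} in some component of $Z(P')$ (of some dimension $j$ with $1\le j\le n-1$) or meets $Z(P')$ in finitely many points. Curves contained in a $j$-dimensional component contribute, by the $\theta_j$-hypothesis, at most $|\PP|\sum_{j=2}^{n-1}\theta_j$ incidences in total (the $j=1$ case, i.e.\ $\gamma$ equal to a component, is controlled separately since distinct irreducible curves share finitely many points, giving an $O(|\PP|+|\Gamma|)$ contribution via $K_{s,t}$-freeness again); curves not contained in any component meet $Z(P')$ in $O_D(1)$ points each, contributing $O(|\Gamma|)$.

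The induction is on the dimension of the ambient Pfaffian variety one is working inside: when we pass to points on $Z(P')$ and curves contained in a $j$-dimensional irreducible component $V$ of $Z(P')$, we would like to recurse on the incidence problem inside $V$. This requires knowing that $\gamma\subseteq V$ is still a Pfaffian curve of bounded complexity and that $V$ carries its own partitioning theory; since $V$ is cut out by constant-degree Pfaffian functions (the degree bound $c_j$ in the hypothesis is exactly chosen to match the complexity produced by Corollary~\ref{cor:irreducible-components} applied to a constant-degree $P'$), the complexity stays bounded and the $\theta_j$ hypotheses are precisely what make the recursion terminate. The main obstacle, and the place requiring the most care, is the bookkeeping in this recursion: one must track how the format parameters $(\alpha,\beta,r)$ and the various constants evolve as one descends through dimensions $n-1, n-2,\dots$, ensure that the constant-degree choice at each level feeds a legitimate constant-degree choice at the next, and verify that the accumulated $\eps$ losses and constants remain bounded in terms of the fixed data — this is the reason the theorem is stated with the $\eps$ and with constants depending on all of $\eps,n,m,\alpha,\beta,r,s,t$. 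A secondary technical point is justifying that the partitioning Pfaffian function can be taken of constant degree while still yielding the stated exponents; this uses the observation (as discussed in Section~\ref{sec:constantsvshighdegree}) that iterating a constant-degree partition $O(\log)$-many times simulates a high-degree partition at the cost of the $\eps$ in the exponent.
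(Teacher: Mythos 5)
Your proposal conflates two incompatible partitioning regimes, and the dimension recursion you sketch is not what the paper does (nor, in fact, what it can do). The first half of your argument is a high-degree strategy: you say $D$ is ``to be optimized at the end as a function of $|\PP|$ and $|\Gamma|$'', and you apply K\H{o}v\'ari--S\'os--Tur\'an in each cell and balance, which indeed reproduces the exponents $\frac{s(r+1)}{s(n+r)-n+1}$ and $\frac{(s-1)(n+r)}{s(n+r)-n+1}$ for the cell contribution. But with a large, data-dependent $D$, the handling of incidences on $Z(P')$ breaks: the number of $1$-dimensional irreducible components of $Z(P')$ grows like $D^{\kappa}$, so those incidences are $\gtrsim D^{\kappa}|\PP|$, and the $\theta_j$ hypothesis is unavailable because it only applies to components cut out by Pfaffian functions of degree at most the fixed constant $c_j$, not of degree $\mathrm{poly}(D)$. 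Conversely, if you take $D$ constant (as you mention in the second half, and as the paper actually does), then ``summing a per-cell K\H{o}v\'ari--S\'os--Tur\'an bound'' cannot produce the claimed exponents: for constant $D$ that sum is essentially $|\PP||\Gamma|^{(s-1)/s}$, no better than a single global KST application. The paper's way out is precisely to drop per-cell KST and replace it with a strong induction hypothesis on $|\PP|+|\Gamma|$ applied inside each cell; the $+\eps$ in the $|\PP|$-exponent is there to produce a $D^{-(n+r)\eps}$ gain after recombining the cells, which is what absorbs the multiplicative constants from Theorem~\ref{thm:pfaff-part-bbz} and closes the induction. This is the standard constant-degree device of Solymosi--Tao, and it is not a cosmetic $\eps$-slack for ``integrality of $D$'' or for ``the discrepancy between $D^{n-1}$ and $D^{r+1}$'' as you suggest.

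Your second structural deviation is the proposed induction ``on the dimension of the ambient Pfaffian variety''. The paper never recurses into a lower-dimensional irreducible component $V$ of $Z(P')$; doing so would run into exactly the obstruction the paper flags in Section~\ref{sec:projection-pfaffian}, namely that projections of semi-Pfaffian sets need not be semi-Pfaffian, so a self-contained lower-dimensional incidence theory inside $V$ is not available. Instead, the $\theta_j$ hypotheses are used as flat, one-step bounds: curves contained in the union of $j$-dimensional irreducible components contribute at most $|\PP|\,\theta_j$ incidences (up to a constant, via Corollary~\ref{cor:irreducible-components} and Corollary~\ref{cor:smooth-decomposition}), curves equal to $1$-dimensional components contribute $O(|\PP|)$ because there are only $O_D(1)$ of them, and curves not contained in $Z(P')$ contribute $O(|\Gamma|)$ by Corollary~\ref{cor:khovanskii}. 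No descent is performed. Finally, a smaller point: the paper partitions $\PP$ and $\Gamma$ \emph{simultaneously} using the multi-collection form of Theorem~\ref{thm:pfaff-part-bbz} (partitioning $\PP$ as a family of $0$-dimensional sets to get $\lesssim |\PP|/D^{n+r}$ points per cell), which is what guarantees that the induction hypothesis may be applied in each cell. Partitioning only $\Gamma$ and averaging the points, as you do, is not enough to ensure the per-cell parameters have strictly decreased, so the induction would not close.
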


\begin{corollary}[of Theorem \ref{thm:st-rn}]
\label{cor:st-plane-curves}
Let $\vec{q}$ be an algebraically independent Pfaffian chain of order $r$ and chain-degree $\alpha$. Let $\PP \subseteq \R^2$ be a set of points, and let $\Gamma \subseteq \R^2$ be a set of distinct irreducible Pfaffian curves such that each $\gamma \in \Gamma$ is defined by at most $m$ Pfaffian functions from $\Pf_n(\beta, \vec{q})$. Suppose that $G_{\PP, \Gamma}$ does not contain any copy of $K_{s,t}$. Then, for any $\eps > 0$, there exist constants $C_1 = C_1(\eps, m, \alpha, \beta, r, s, t)$ and $C_2 = C_2(\eps, m, \alpha, \beta, r, s, t)$ such that 
\[
|I(\PP, \Gamma)| \le C_1 \left(|\PP|^{\frac{s(r+1)}{s(r+2) - 1} + \eps}|\Gamma|^{\frac{(s-1)(r+2)}{s(r+2) - 1}}\right) + C_2 \left(|\PP| + |\Gamma|\right).
\]
\end{corollary}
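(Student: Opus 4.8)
My plan is to prove Corollary~\ref{cor:st-plane-curves} directly, running the same induction that underlies Theorem~\ref{thm:st-rn} but carried out in the plane; the statement is cleaner precisely because for $n=2$ there is no integer $j$ with $1<j<n$, so all hypotheses about $j$-dimensional intermediate Pfaffian sets, and the term $|\PP|\sum_{j=2}^{n-1}\theta_j$, are vacuous. Concretely, this is the classical Szemer\'edi--Trotter-for-curves argument (as in \cite[Chapter 3]{sheffer2022polynomial}) with polynomials replaced by Pfaffian functions: the cell count is controlled by a Khovanski{\u\i}-type bound (the Pfaffian analogue of Theorem~\ref{thm:optm}), the ``a curve meets few cells'' estimate is supplied by Theorem~\ref{thm:bezout-pfaffian}, and B\'ezout-type intersection counts come from Theorem~\ref{thm:khovanskii}. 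Write $N=|\PP|$ and $M=|\Gamma|$ and induct on $M$. As base cases, and to handle the extreme ranges, I would invoke the K\H{o}v\'ari--S\'os--Tur\'an bound on the $K_{s,t}$-free incidence graph $G(\PP,\Gamma)$: it gives $|I(\PP,\Gamma)|=O_{s,t}(NM^{1-1/s}+M)$ and symmetrically $O_{s,t}(MN^{1-1/t}+N)$, and a short computation shows one of these already implies the claimed bound outside a bounded window of $\log M/\log N$.

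In the main range I would fix a degree $D$ and apply the Pfaffian partitioning theorem (Theorem~\ref{thm:pfaff-part-bbz}, equivalently Theorem~\ref{thm:poly-part-guth}-\ref{point:pfaffian-poly-part}) to $\PP$, viewed as a family of $0$-dimensional semi-Pfaffian sets w.r.t.\ $\vec q$. This yields a non-zero $P'\in\Pf_2(D,\vec q)$ such that every connected component (``cell'') of $\R^2\setminus Z(P')$ contains at most $CN/D^{2+r}$ points of $\PP$, while the number of cells is $O(D^{2+r})$. Note that a polynomial partition would only create $O(D^2)$ cells, too few to absorb the $O(D^{1+r})$ cells that a single curve $\gamma$ may meet by Theorem~\ref{thm:bezout-pfaffian} (with $k=1$), so a genuinely Pfaffian partition is needed here. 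Writing $N_\Delta,M_\Delta$ for the number of points, resp.\ curves, meeting a cell $\Delta$, we get $\sum_\Delta N_\Delta=N$ and $\sum_\Delta M_\Delta=O(D^{1+r}M)$. Since the incidence graph restricted to each cell is still $K_{s,t}$-free, K\H{o}v\'ari--S\'os--Tur\'an gives $I_\Delta=O(N_\Delta M_\Delta^{1-1/s}+N_\Delta+M_\Delta)$, and summing over cells with $N_\Delta\le CN/D^{2+r}$ and applying H\"older's inequality to $\sum_\Delta M_\Delta^{1-1/s}$ (using the cell count $O(D^{2+r})$) bounds the interior incidences by $O(D^{-(s-1)/s}NM^{1-1/s}+N+D^{1+r}M)$.

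It remains to count incidences lying on $Z(P')$. Split $\Gamma=\Gamma_{\mathrm{out}}\sqcup\Gamma_{\mathrm{in}}$ according to whether a curve is contained in $Z(P')$. For $\gamma\in\Gamma_{\mathrm{out}}$ the set $\gamma\cap Z(P')$ is $0$-dimensional, and since $\dim\gamma=1$ with $\dim(\gamma\cap Z(P'))=0$, Corollary~\ref{cor:khovanskii} (via Theorem~\ref{thm:khovanskii}) bounds its size by $O(D^{r+1})$, contributing $O(D^{r+1}M)$ such incidences in all. For $\Gamma_{\mathrm{in}}$: as $P'\ne 0$, the set $Z(P')$ is at most $1$-dimensional, and by Corollary~\ref{cor:irreducible-components} it has at most $C_1D^{C_2}$ irreducible components; since the curves of $\Gamma$ are distinct and irreducible, each such curve coincides with one of the $1$-dimensional components, so $|\Gamma_{\mathrm{in}}|\le C_1 D^{C_2}$, and I would recurse on $(\PP\cap Z(P'),\Gamma_{\mathrm{in}})$.

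The crux, and the only essential departure from the algebraic case, is the choice of $D$ and the closing of the recursion. If $D$ were free to grow, balancing $D^{-(s-1)/s}NM^{1-1/s}$ against $D^{1+r}M$ gives $D\sim(NM^{-1/s})^{s/(s(r+2)-1)}$, and substituting back produces exactly $N^{\frac{s(r+1)}{s(r+2)-1}}M^{\frac{(s-1)(r+2)}{s(r+2)-1}}$ with \emph{no} $\eps$; the obstruction is that $|\Gamma_{\mathrm{in}}|$ can be as large as $D^{C_2}$ with $C_2=C_2(r)>1$, so at the optimal $D$ the recursion would be called with $\sim M$ curves and fail to terminate. I would therefore instead take $D$ a large constant depending on $\eps$ and iterate the partitioning step $\Theta(1/\eps)$ times — the constant-degree Pfaffian partitioning regime used in all the applications of this paper — so that each round shrinks the per-cell point count by the constant factor $D^{2+r}$, multiplies the per-cell curve count by a constant, and spawns only $O(D^{C_2})=O(1)$ curves to recurse on; a geometric-series balancing of the interior contributions against the accumulated $O(N+M)$ error terms, optimised over $D$, then yields the exponent $\tfrac{s(r+1)}{s(r+2)-1}$ on $|\PP|$ up to $+\eps$, the exponent $\tfrac{(s-1)(r+2)}{s(r+2)-1}$ on $|\Gamma|$, and the additive $C_2(|\PP|+|\Gamma|)$, all constants depending on $\eps$ through the fixed $D$. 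The main obstacle throughout — and the reason for the $\eps$ — is exactly this: a Pfaffian hypersurface of degree $D$ can carry $\gg D$ irreducible curves, which is what prevents the clean growing-degree argument available when $r=0$.
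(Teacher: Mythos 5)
Your proposal is correct, and in substance it is the paper's own argument unfolded: the paper proves Corollary~\ref{cor:st-plane-curves} simply by specializing Theorem~\ref{thm:st-rn} to $n=2$ (in which case the $j$-range $1<j<n$ is empty, so the $\theta_j$ hypotheses and the $|\PP|\sum_{j=2}^{n-1}\theta_j$ term both vanish), and you re-run exactly the constant-degree Pfaffian-partitioning induction that underlies that theorem, directly in the plane. You correctly identify all the load-bearing ingredients — $0$-dimensional Pfaffian partitioning of $\PP$ from Theorem~\ref{thm:pfaff-part-bbz} with the improved $N/D^{2+r}$ per-cell bound, the $O(D^{1+r})$ curve-to-cell bound from Theorem~\ref{thm:bezout-pfaffian}, the $O(D^{r+1})$ intersection count from Corollary~\ref{cor:khovanskii}, the $C_1D^{C_2}$ bound on irreducible components of $Z(P')$ from Corollary~\ref{cor:irreducible-components}, and K\H ov\'ari--S\'os--Tur\'an as base case and fallback — and you correctly diagnose why the growing-degree one-step argument that works when $r=0$ fails here, and why constant degree plus an $\eps$ in the exponent is the fix. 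Two cosmetic differences from the paper: first, you partition only $\PP$ and recover curve statistics via H\"older on $\sum_\Delta M_\Delta^{\,b}$, whereas the paper applies Theorem~\ref{thm:pfaff-part-bbz} to $\PP$ \emph{and} $\Gamma$ simultaneously to obtain per-cell bounds $N_\Delta\lesssim N/D^{2+r}$, $M_\Delta\lesssim M/D$ directly; the two bookkeeping schemes yield the same $D^{-(2+r)\eps}$ gain in the main term, so this is immaterial. Second, your closing step is phrased loosely — ``iterate the partitioning step $\Theta(1/\eps)$ times'' — whereas the paper closes by a single clean induction on $|\PP|+|\Gamma|$ at a fixed constant degree $D=D(\eps,\dots)$, choosing $D$ large enough so that the factor $\kappa D^{-(2+r)\eps}$ picked up when summing the inductive bound over the $O(D^{2+r})$ cells falls below $C_1/4$; the recursion depth is $\Theta(\log|\PP|/\log D)$, not $\Theta(1/\eps)$. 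That imprecision does not affect correctness. Finally, note that for $\gamma\in\Gamma_{\mathrm{in}}$ you do not actually need to recurse: since $|\Gamma_{\mathrm{in}}|\le C_1D^{C_2}=O_\eps(1)$, the trivial bound $|\PP\cap Z(P')|\cdot|\Gamma_{\mathrm{in}}|=O_\eps(|\PP|)$ already absorbs into the $C_2|\PP|$ term, which is what the paper does.
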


\begin{proof}[Proof of Corollary \ref{cor:st-plane-curves}]
    Immediate by setting $n=2$ and $\theta_j = 0$ in Theorem \ref{thm:st-rn}
\end{proof}

\begin{remark}
    Theorem \ref{thm:st-rn} works under the assumption that low degree Pfaffian sets do not contain too many curves from $\Gamma$. This assumption matches the assumption of the main incidence theorems in \cite{sharirrd2016incidencessolomon} (henceforth this will be called the \emph{SSS assumption}), where they study incidences between points and algebraic curves in $\R^{n}$ when $n \ge 3$. 
    
    In the algebraic case, when the ambient space is $\R^2$, an SSS type assumption can be avoided simply because it is well known that an algebraic curve in $\R^2$ of degree $D$ contains at most $D$ irreducible components. However, in the Pfaffian case, the question of counting the number irreducible components becomes complicated. Counting irreducible components has been achieved only in restricted cases; for e.g. \cite{roy1994finding} study irreducible (over the real algebraic numbers) components of exponential polynomials. However, using Corollary \ref{cor:irreducible-components} where we obtain a coarse bound on the number of irreducible components, we are able to avoid the SSS assumption in the statement of Corollary \ref{cor:st-plane-curves}.
\end{remark}

\subsection{Pfaffian Joints}
\label{sec:applications-theoremstatements-joints}

\begin{definition}[Joints] \label{defn:joints}
Let $m, n \ge 2$. For $i \in [m]$, suppose $\Gamma_i$ is a set of $k_i$-dimensional semi-Pfaffian sets in $\R^n$ such that $\sum_{i=1}^m k_i = n$. A point $x \in \R^n$ is a joint if
\begin{enumerate}
\item for each $i \in [m]$, there exists $\gamma_i \in \Gamma_i$ such that $x$ is a smooth point of $\gamma_i$, and
\item the tangent spaces of $\gamma_i$ at $x$ for all $i$ span $\R^n$.
\end{enumerate}
\end{definition}

The following theorem establishes an upper bound on the number of joints formed by $n$ sets of Pfaffian curves in $\R^n$.

\begin{theorem}
\label{thm:pfaffian-joints}
Let $\vec{q}$ be an algebraically independent Pfaffian chain of order $r$ and chain-degree $\alpha$. Suppose there are $n \ge 3$ sets of distinct irreducible Pfaffian curves $\{\Gamma_i\}_{i \in [n]}$ in $\R^n$ such that for all $i \in [n]$, each $\Pfaff_i \in \Gamma_i$ is defined by at most $m$ Pfaffian functions from $\Pf_n(\beta, \vec{q})$. Then for any $\eps > 0$, there exists a constant $C = C(\eps, n, m, \alpha, \beta, r)$ such that the number of joints is bounded by 
\[
C\cdot \min_{j \in [n]} \left\{
|\Gamma_j|^{\eps}\right\} \cdot \prod_{i = 1}^n |\Gamma_i|^{\max\left\{\frac{n + r}{n(n - 1)}, \frac{2}{n+1}\right\}}.
\]
\end{theorem}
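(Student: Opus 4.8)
The plan is to follow the polynomial-method proof of the joints theorem for curves (as in \cites{kaplan2010lines,quilodran2010joints,yang2016generalizations}), replacing the algebraic partitioning and Bézout inputs by their Pfaffian counterparts proved above. Let $J$ denote the set of joints. First I would run the standard pigeonhole/degree argument: if a joint $x$ is formed by curves $\gamma_i \in \Gamma_i$, then at $x$ the tangent lines of the $\gamma_i$ span $\R^n$; hence a nonzero function $P$ vanishing on all the curves involved, together with enough of its derivatives vanishing, cannot have $x$ as a zero of high multiplicity unless $P$ is forced to be large. Concretely, one picks (by a dimension count over the vector space $\Pf_n(D,\vec q)$, which has dimension $\Theta(D^n)$ by counting monomials in $n+r$ variables of degree $\le D$ constrained appropriately — here one uses that $\vec q$ is algebraically independent so that distinct polynomial shells give genuinely distinct Pfaffian functions) a nonzero $P' \in \Pf_n(D,\vec q)$ of degree $D \sim (\#\text{curves})^{1/n}$ vanishing on every curve that participates in some joint, after first using the usual "cells with few joints" pruning to reduce to the case where every curve contains $\gtrsim |J|/(\sum_i |\Gamma_i|) $ joints (a $K_{s,t}$-free / double-counting step tailored to curves).

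The core estimate is then: on each curve $\gamma$ participating in a joint, $P'$ restricted to $\gamma$ is a Pfaffian function of one "variable" (the curve is a Pfaffian curve, so we may parametrize locally and restrict), of controlled format, vanishing at every joint on $\gamma$; by Khovanskiĭ's bound (Theorem \ref{thm:khovanskii}) in the version for systems, or rather its one-variable corollary, either $P'|_\gamma \equiv 0$ or the number of its zeros on $\gamma$ is $O(D + r)$, i.e. $O(D)$ for fixed $r$. Choosing $D$ so that $D < $ (number of joints on the least-incident curve) forces $P'|_\gamma \equiv 0$ for every participating curve $\gamma$. Now one differentiates: at a joint $x$ the tangent directions span $\R^n$, so if $P'$ and all its first-order directional derivatives along these $n$ independent tangent lines vanish at $x$, then $\nabla P'(x) = 0$. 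Iterating the construction on the gradient components (which are again Pfaffian functions w.r.t.\ $\vec q$, of degree increased by $O(1)$ — this is where the Pfaffian chain being fixed keeps the format under control) and using Theorem \ref{thm:bezout-pfaffian} (the semi-Pfaffian Barone–Basu bound) to bound the number of connected components / isolated common zeros of the resulting Pfaffian system by $O(D^{\,0+r}) = O(D^r)$ on a $0$-dimensional set, one gets $|J| \le O(D^{\text{something}})$, and balancing this against the dimension count $\dim \Pf_n(D,\vec q) \sim D^n$ yields the exponent $\tfrac{n+r}{n(n-1)}$. The second exponent $\tfrac{2}{n+1}$ comes from the alternative, weaker bound that does not pay the $D^r$ penalty — e.g.\ partitioning with a constant-degree Pfaffian function via Theorem \ref{thm:poly-part-guth}-\ref{point:pfaffian-poly-part} and iterating the Szemerédi–Trotter-type incidence bound — so the final answer takes the max of the two regimes, and the $\min_j |\Gamma_j|^\eps$ factor absorbs the $\eps$-loss incurred when one invokes Corollary \ref{cor:st-plane-curves}/Theorem \ref{thm:st-rn}-style incidence estimates (whose exponents themselves carry an $\eps$ coming from the Pfaffian partitioning quantitative guarantee).

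The main obstacle I expect is bookkeeping the \emph{format growth}: each time one passes from $P'$ to its partial derivatives, or restricts $P'$ to a curve, the degree $\beta$ and order $r$ of the ambient Pfaffian functions change, and one must verify these stay bounded by constants depending only on $(n,m,\alpha,\beta,r)$ so that the "constant" $C$ in the statement is legitimate; the algebraic-independence hypothesis on $\vec q$ is essential precisely here and at the dimension-counting step, to guarantee the constructed $P'$ is genuinely nonzero (mirroring the non-vanishing argument at the end of the proof of Theorem \ref{thm:poly-part-guth}-\ref{point:pfaffian-poly-part}). A secondary difficulty is handling curves $\gamma$ for which $P'|_\gamma \not\equiv 0$ but which still contain many joints: these must be swept into the low-order term via the preliminary pruning, and making that pruning quantitatively match the two exponents $\tfrac{n+r}{n(n-1)}$ and $\tfrac{2}{n+1}$ simultaneously is where the case analysis lives. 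Everything else — the local parametrization of Pfaffian curves, the fact that smooth points and tangent spaces behave well (Proposition \ref{prop:strat} and Lemma \ref{lem:part}), and the final optimization of $D$ — is routine given the tools assembled in Sections \ref{sec:preliminaries} and \ref{sec:main-proofs-subsection}.
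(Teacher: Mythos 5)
Your proposal takes a genuinely different route from the paper's, based on the vanishing-lemma ``polynomial method'' (à la Guth--Katz, Quilodrán, Kaplan--Sharir--Shustin, and Yang), but it contains a gap that you flag and do not close, and I do not see how to close it. The engine of that method is degree reduction under differentiation: one takes $P$ of \emph{minimal} degree vanishing at all joints, shows $\nabla P$ also vanishes at all joints, and contradicts minimality because $\deg \partial_i P < \deg P$. For a Pfaffian function $P' = Q(\x, q_1, \ldots, q_r)$ with $\deg Q = D$, the chain rule and the Pfaffian relations \eqref{eqn:pfaffian-differential-condition} give $\deg \partial_i P' \le D + \alpha - 1$, so the degree \emph{grows} by $\alpha - 1 \ge 0$ rather than shrinks, and the minimality/iteration step collapses. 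A secondary issue is the dimension count: algebraic independence of $\vec q$ gives $\dim \Pf_n(D, \vec q) = \binom{D+n+r}{n+r} = \Theta(D^{n+r})$ (polynomials in $n+r$ variables of degree $\le D$), not $\Theta(D^n)$, which would change the balancing you outline.

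The paper avoids the polynomial method entirely. Its proof is a single induction on $|\Gamma_1|$ that applies the constant-degree Pfaffian partitioning Theorem~\ref{thm:pfaff-part-bbz} at each step, then counts joints in cells by the induction hypothesis and joints on $Z(P)$ via Corollary~\ref{cor:smooth-decomposition} and Corollary~\ref{cor:khovanskii}. Both exponents in the bound arise inside that one argument, not from two separate methods as you suggest: $\frac{n+r}{n(n-1)}$ comes from balancing the $O(D^{n+r})$ cells against the per-cell reduction factor $D^{-(n-1)}$ in curve counts, while $\frac{2}{n+1}$ comes from a pairwise-intersection count of joints lying on $Z(P)$ (any joint on a smooth piece $Z_a$ of $Z(P)$ has at least one of its $n$ curves not contained in $Z_a$, since otherwise the tangents would all lie in $T_x Z_a$ and could not span $\R^n$). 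The $\min_j |\Gamma_j|^\eps$ factor is the usual $\eps$-loss needed to close a constant-degree-partitioning induction; it has nothing to do with Corollary~\ref{cor:st-plane-curves} or any Szemer\'edi--Trotter estimate, which are not used in the proof of Theorem~\ref{thm:pfaffian-joints} at all.
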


\begin{remark} 
Using Lemma \ref{lem:part}, one can remove the requirement of irreducibility of the Pfaffian curves in the hypothesis of Theorem \ref{thm:st-rn}, and Corollary \ref{cor:st-plane-curves}, and make it work for arbitrary semi-Pfaffian curves. The same holds for Theorem \ref{thm:pfaffian-joints} as well.
\end{remark}

\subsection{Discussion}
\label{sec:applications-discussion}

\subsubsection{Non-degeneracy assumptions}

In our Szemer\'edi-Trotter type Theorems, we assume that the incidence graph does not contain any copy of $K_{s,t}$. Suppose $\Gamma$ in Theorem \ref{thm:st-rn} was just a set of lines, then it is true that $K_{2,2} \not\subseteq G_{\PP, \Gamma}$. Thus Theorem \ref{thm:st-rn} generalizes the classical version of the Szemer\'edi-Trotter theorem \cite{szemereditrotter}. Similarly, if $\Gamma$ is purely a set of unit circles, then $G_{\PP, \Gamma}$ contains no copy of $K_{2,3}$, or if $\Gamma$ is a set of arbitrary circles, $G_{\PP, \Gamma}$ contains no copy of $K_{3,2}$. 

In general, if $\Gamma$ is a general set of varieties, it is quite natural to require some kind of non-degeneracy assumptions on $\PP$ and $\Gamma$ to prove non-trivial incidence bounds. For e.g., in \cite{solymosi2012incidence}, a bound on incidences between points and higher dimensional varieties is proved assuming some axioms that they call \emph{pseudoline axioms} -- i.e. axioms which ensure that $\Gamma$ behaves somewhat like a set of lines. \cite[Theorem 1.1]{pachsharir1998pointscurves} proves an incidence bound for points and simple curves in $\R^2$ with an assumption equivalent to not having $K_{s,t}$ inside $G_{\PP, \Gamma}$, although it is worded slightly differently. \cite[Theorem 1.4 and Theorem 1.5]{sharirrd2016incidencessolomon} prove incidence bounds for points and curves in $\R^d$, $d \ge 3$, by making an assumption that ensures that no partitioning hypersurface can contain too many elements of $\Gamma$; we include the same assumption in Theorem \ref{thm:st-rn}. \cite{basu2018minimal} prove an o-minimal \sztr{} theorem for points and plane curves assuming there is no copy of $K_{2, t}$ for some fixed $t$.

An example of a degenerate case is where $\Gamma$ is a set of planes all intersecting in the same line (one might visualise a partially open book where the pages are the planes, and the spine of the book is the common line of intersection), and all $\PP$ lie on the line. Then the number of incidences is exactly $|P||\Gamma|$, which is the trivial upper bound.

\subsubsection{Degree of partitioning polynomial or partitioning Pfaffian function} 
\label{sec:constantsvshighdegree}

When applying polynomial partitioning, or in our case, perhaps Pfaffian partitioning, one needs to decide what the degree of the polynomial or Pfaffian function must be. In incidence geometry literature, there is a dichotomy in this choice - high degree partitioning vs low/constant degree partitioning. 

For instance, the sharp version of the original \sztr{} theorem, where we count incidences between points and lines in $\R^2$, can be proved using polynomial partitioning where the degree of the partitioning polynomial $P$ is high, i.e. the degree depends on the cardinalities of $\PP$ and $\Gamma$ \cite{kaplan2012simple}. Since the degree of $P$ is high, the number of points/lines in each connected component of $\R^2 \setminus Z(P)$ is small; this allows us to use the naive bound of Theorem \ref{thm:incidence-naive} to count the number of incidences in each connected component, and subsequently amplify it. However, the higher degree of $P$ in turn means that the number of incidences on $Z(P)$ becomes higher. While there is no trouble in $\R^2$, using the same strategy in higher dimensions causes trouble.

\cite{solymosi2012incidence} introduced the idea of partitioning with a constant degree polynomial. But for a loss in power of $\eps$, this strategy allowed them to prove sharp bounds on incidences in higher dimensions between points and high-dimensional varieties. In this case, while counting incidences on the zero set of the partitioning polynomial becomes easier, one can no longer use the naive bound of Theorem \ref{thm:incidence-naive} to count incidences in connected components; one has to proceed by induction with a strong enough hypothesis. In fact, we lose a factor $\eps$ in the exponent precisely to close the induction.

In Theorem \ref{thm:st-rn} and Theorem \ref{thm:pfaffian-joints}, we use constant degree partitioning, and particularly, we use constant degree Pfaffian partitioning. While counting incidences on Pfaffian sets is indeed harder than on algebraic sets, the fact that the degree of the partitioning Pfaffian function is constant assuages this problem.

\subsubsection{Projections of Pfaffian sets}
\label{sec:projection-pfaffian}

While one might hope to generalize Theorem \ref{thm:st-rn} to incidences between points and higher dimensional semi-Pfaffian sets, and generalize Theorem \ref{thm:pfaffian-joints} to counting joints between higher dimensional Pfaffian sets, there is one common issue that stymies both directions, i.e. the issue of projections. While semi-algebraic sets project into semi-algebraic sets by the Tarski-Seidenberg theorem, projections of semi-Pfaffian sets, called sub-Pfaffian sets, need not be semi-Pfaffian. Such an example is provided in \cite{osgood}.

For an example of how this issue manifests, suppose we wished to count incidences between points and two-dimensional semi-Pfaffian sets in $\R^3$. In this case, counting incidences on the zero set of the partitioning Pfaffian function becomes difficult; we cannot induct using Corollary \ref{cor:st-plane-curves} because two-dimensional semi-Pfaffian sets which do not lie on the partitioning Pfaffian set intersect in one-dimensional Pfaffian sets, and we cannot project these one-dimensional intersections down to $\R^2$ because of the problem mentioned above.

One way around this problem could be to develop multi-level polynomial/Pfaffian partitioning theorems, which is a challenging problem with many works devoted to it \cites{matousekmultilevel2015,basusombrapolynomial2016,zaranfpsz2017,walshpolynomial2020}. However, this is out of the scope of our work, and we leave it for future work.

\subsection{Proofs of Theorem \ref{thm:st-rn} and Theorem \ref{thm:pfaffian-joints}}
\label{sec:applications-proofs}

\subsubsection{Proof of \sztr{} type theorem}
To prove our \sztr{} type theorems, we make use of the following naive bound on incidences between points and an arbitrary set of sets in $\R^n$. In fact, this works not just over $\R$, but any field.

\begin{theorem}[{K\H ov\'ari-S\'os-Tur\'an Theorem \cite[Theorem 4.5.2]{matousekdiscrete}}] \label{thm:incidence-naive}
Let $\PP$ be a set of points in $\R^n$ and let $\Gamma$ be a set of subsets of $\R^n$. Suppose that $G_{\PP, \Gamma}$ does not contain any copy of $K_{s,t}$. Then there exists a constant $C = C(s, t)$ such that \[|I(\PP, \Gamma)| \le C \cdot \min\left(|\PP||\Gamma|^{\frac{s-1}{s}} + |\Gamma|, |\PP|^{\frac{t-1}{t}}|\Gamma| + |\PP|\right).\]
\end{theorem}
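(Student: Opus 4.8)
\section*{Proof proposal}

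The plan is to prove the two bounds inside the minimum separately by the classical double-counting argument of K\H ov\'ari, S\'os and Tur\'an, applied once with the roles of points and sets as given and once with those roles interchanged; then one takes the smaller of the two and enlarges the constant. The only consequence of $K_{s,t}$-freeness that I will use is its combinatorial reformulation: since a bipartite graph containing either configuration would contain a copy of $K_{s,t}$, the hypothesis implies that no $s$ points of $\PP$ lie on $t$ common members of $\Gamma$, and dually no $t$ members of $\Gamma$ share $s$ common points of $\PP$ (equivalently, every $s$-subset of $\PP$ is incident to at most $t-1$ members of $\Gamma$, and every $t$-subset of $\Gamma$ meets at most $s-1$ points of $\PP$).

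First I would establish $|I(\PP,\Gamma)| \le C(s,t)\big(|\PP|\,|\Gamma|^{(s-1)/s} + |\Gamma|\big)$. For $\gamma \in \Gamma$ write $d_\gamma := \#\{p \in \PP : p \in \gamma\}$, so that $\sum_{\gamma \in \Gamma} d_\gamma = |I(\PP,\Gamma)|$. Count the pairs $(\gamma, S)$ with $S \subseteq \PP$, $|S| = s$, and every point of $S$ incident to $\gamma$: on the one hand this count is $\sum_{\gamma} \binom{d_\gamma}{s}$, and on the other hand, since each $s$-subset of $\PP$ is incident to at most $t-1$ members of $\Gamma$, it is at most $(t-1)\binom{|\PP|}{s} \le (t-1)|\PP|^s/s!$. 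Now restrict the sum to the sets with $d_\gamma \ge 2s$; for such $d_\gamma$ one has $\binom{d_\gamma}{s} \ge (d_\gamma/2)^s/s!$, hence $\sum_{d_\gamma \ge 2s} d_\gamma^{\,s} \le 2^s(t-1)|\PP|^s$, and the power-mean inequality $\big(\sum d_\gamma\big)^s \le |\Gamma|^{s-1}\sum d_\gamma^{\,s}$ gives $\sum_{d_\gamma \ge 2s} d_\gamma \le 2(t-1)^{1/s}\,|\PP|\,|\Gamma|^{(s-1)/s}$. The sets with $d_\gamma < 2s$ contribute at most $2s|\Gamma|$ to $|I(\PP,\Gamma)|$, which is absorbed into the additive $|\Gamma|$ term.

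The second bound is proved by the symmetric computation: for $p \in \PP$ put $e_p := \#\{\gamma \in \Gamma : p \in \gamma\}$, count the pairs $(p,T)$ with $T \subseteq \Gamma$, $|T| = t$, and $p$ incident to all of $T$ to get $\sum_{p} \binom{e_p}{t} \le (s-1)\binom{|\Gamma|}{t}$, discard the points with $e_p < 2t$ (whose total contribution is at most $2t|\PP|$), and apply the same convexity/power-mean step to the remaining points to obtain $|I(\PP,\Gamma)| \le 2(s-1)^{1/t}|\PP|^{(t-1)/t}|\Gamma| + 2t|\PP|$. Combining the two estimates and taking $C(s,t)$ to be the larger of the resulting constants completes the proof. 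The only step that needs any care — and the closest thing to an obstacle here — is this convexity passage: $x \mapsto \binom{x}{s}$ is convex only for $x \ge s-1$, so one genuinely must first split off the low-degree vertices, show their total degree sum is $O_{s,t}(|\Gamma|)$ (resp.\ $O_{s,t}(|\PP|)$), and only afterwards apply convexity together with the power-mean inequality on the high-degree part; everything else is routine.
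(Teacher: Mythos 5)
The paper does not actually prove Theorem~\ref{thm:incidence-naive}; it simply cites it from Matou\v{s}ek's book and uses it as a black box. So there is no ``paper's own proof'' to compare against, and the relevant question is just whether your argument is correct.

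It is. You give the standard K\H ov\'ari--S\'os--Tur\'an double-counting argument, applied twice. A few remarks on the details, all minor. First, the two ``reformulations'' of $K_{s,t}$-freeness that you list are in fact the same condition, not dual ones: $s$ points all lying on $t$ common members of $\Gamma$ \emph{is} $t$ members of $\Gamma$ sharing $s$ common points, so one does not need a stronger two-sided hypothesis; the single hypothesis simultaneously licenses both countings, which is why the $\min$ in the conclusion is legitimate. Second, your self-described ``obstacle'' is slightly mislabeled: the step you actually carry out does not invoke convexity of $x \mapsto \binom{x}{s}$ at all. Instead you use the elementary pointwise bound $\binom{d}{s} \ge (d/2)^s/s!$ valid for $d \ge 2s$ (since then each factor $d-i$ with $0\le i\le s-1$ is at least $d/2$), followed by the power-mean inequality $(\sum d_\gamma)^s \le N^{s-1}\sum d_\gamma^s$ on the high-degree terms, where $N \le |\Gamma|$ is the number of such terms. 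Splitting off the $\gamma$ with $d_\gamma < 2s$ and absorbing their total contribution $< 2s|\Gamma|$ into the additive term is exactly the right move, and the symmetric computation with $e_p$, $t$-subsets of $\Gamma$, and the threshold $2t$ gives the second bound. Taking $C(s,t)$ to dominate $2(t-1)^{1/s}$, $2s$, $2(s-1)^{1/t}$, and $2t$ finishes the proof. This is a complete and correct proof of the cited theorem.
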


\begin{proof}[Proof of Theorem \ref{thm:st-rn}]
In general, various intermediary constants that depend on $\eps$, $n$, $m$, $\alpha$, $\beta$, $r$, $s$, and $t$, shall all be denoted by $\kappa$ without disambiguation; for e.g., $\kappa^2$ in some step will be replaced with $\kappa$ in the next step. Only when we need to disambiguate, we will use $\kappa_i$. Later in the proof, we will use Theorem \ref{thm:pfaff-part-bbz} to partition $\PP$ and $\Gamma$ simultaneously; to distinguish, we let $C_0$ denote the maximum of all constants in Theorem \ref{thm:pfaff-part-bbz}. Also, let $C'$ denote the constant of Theorem \ref{thm:incidence-naive}.

We will proceed by induction on $|\PP| + |\Gamma|$. The base case of $|\PP| + |\Gamma| = \kappa$ can be handled by using Theorem \ref{thm:incidence-naive}, i.e.,
\[
|I(\PP, \Gamma)| \le C'(\kappa^{\frac{s-1}{s}}\kappa + \kappa) \le \kappa_1,
\]
thus setting $C_2 \ge \kappa_1$ proves the base case.

For the induction step, we use Theorem \ref{thm:pfaff-part-bbz} to infer the existence of $P \in \Pf_n(D, \vec{q})$, with $D = D(\eps, n, m, \alpha, \beta, r, s, t)$ to be specified later, such that $\R^n \setminus Z(P)$ has connected components $\conncomp_1, \ldots, \conncomp_{\alpha}$ satisfying, for all $i \in [\alpha]$,
\[
|\PP \cap \conncomp_i| \le C_0\frac{|\PP|}{D^{n+r}}, \qquad \text{and} \qquad \left|\{\gamma \in \Gamma \suchthat \gamma \cap \conncomp_i \neq \emptyset\}\right| \le C_0\frac{|\Gamma|}{D^{n-1}}.
\]
 
Setting $D \ge C_0^{\frac{1}{n+r}} + 1$, we have that $C_0\frac{|\PP|}{D^{n+r}} < |\PP|$; thus we can use the induction hypothesis to obtain that the number of incidences in $\conncomp_i$ is bounded as
\begin{align} 
&|I(\PP, \Gamma) \cap \conncomp_i| \nonumber \\
&\le C_1 \left(\left(\frac{C_0|\PP|}{D^{n+r}}\right)^{\frac{s(r+1)}{s(n+r)-n+1} + \eps}\left(\frac{C_0|\Gamma|}{D^{n-1}}\right)^{\frac{(s-1)(n+r)}{s(n+r) -n + 1}} \right) + |\PP \cap \conncomp_i|\sum_{j=2}^{n-1} \theta_j \nonumber \\
& \qquad \qquad \qquad + C_2\left(|\PP \cap \mathcal{C}_i| + \frac{C_0|\Gamma|}{D^{n-1}}\right) \nonumber \\
&= C_1 C_0^{\frac{s(r+1) + (s-1)(n+r)}{s(n+r) - n + 1} + \eps} \left(\frac{|\PP|}{D^{n+r}}\right)^{\frac{s(r+1)}{s(n+r) - n + 1} + \eps}\left(\frac{|\Gamma|}{D^{n-1}}\right)^{\frac{(s-1)(n+r)}{s(n+r)-n + 1}}  + |\PP \cap \conncomp_i|\sum_{j=2}^{n-1} \theta_j  \nonumber \\
& \qquad \qquad \qquad+ C_2\left(|\PP \cap \mathcal{C}_i| + \frac{C_0|\Gamma|}{D^{n-1}}\right) \nonumber \\
&= \kappa C_1 \left(\frac{|\PP|^{\frac{s(r+1)}{s(n+r) - n + 1} + \eps}|\Gamma|^{\frac{(s-1)(n+r)}{s(n+r)-n + 1}}}{D^{\frac{s(n+r)(r+1) + (s-1)(n+r)(n-1)}{s(n+r) - n + 1} + (n+r)\eps}}\right) + |\PP \cap \conncomp_i|\sum_{j=2}^{n-1} \theta_j \nonumber \\
&\qquad \qquad \qquad  + C_2\left(|\PP \cap \mathcal{C}_i| + \frac{C_0|\Gamma|}{D^{n-1}}\right) \nonumber \\
&= \kappa C_1 D^{-(n+r)(1 + \eps)}|\PP|^{\frac{s(r+1)}{s(n+r) - n+1} + \eps}|\Gamma|^{\frac{(s-1)(n+r)}{s(n+r)-n+1}} + |\PP \cap \conncomp_i|\sum_{j=2}^{n-1} \theta_j \nonumber \\
&\qquad \qquad \qquad + C_2\left(|\PP \cap \mathcal{C}_i| + \frac{C_0|\Gamma|}{D^{n-1}}\right). \label{eqn:Icc-rn}
\end{align}

If $|P| < |\Gamma|^{\frac{1}{s}}$, then from Theorem \ref{thm:incidence-naive} we'd have that
\[
|I(\PP, \Gamma)| \le C'(|\PP||\Gamma|^{\frac{s-1}{s}} + |\Gamma|) < 2C'|\Gamma|,
\]
and as long as $C_2 \ge 2C'$, we would be done. Thus without loss of generality, we can assume
\begin{equation}
\label{eqn:cases-to-consider-wlog-rn}
|\Gamma|^{\frac{1}{s}} \le |\PP|.
\end{equation}
Now, assuming \eqref{eqn:cases-to-consider-wlog-rn}, we have
\begin{equation} 
\label{eqn:gamma-ub-wlog-rn}
|\Gamma| = |\Gamma|^{\frac{(s-1)(n+r)}{s(n+r) - n + 1}}|\Gamma|^{\frac{(r+1)}{s(n+r) - n+1}} \le |\Gamma|^{\frac{(s-1)(n+r)}{s(n+r) - n+1}}|\PP|^{\frac{s(r+1)}{s(n+r) - n + 1}}.
\end{equation}

Noting that by Theorem \ref{thm:bezout-pfaffian} (and also \cite[Theorem 3.4]{gabrielov2004complexity}), we have that $\R^n \setminus Z(P)$ has $\alpha \le \kappa D^{n+r}$ connected components, we estimate the total number of incidences in all the connected components of $\R^n \setminus Z(P)$ as follows:
\begin{align} 
&\left|I(\PP, \Gamma) \cap \left(\bigcup_{i=1}^{\alpha} \conncomp_i\right)\right| \nonumber \\
&= \sum_{i=1}^{\alpha} \left|I(\PP, \Gamma) \cap \conncomp_i\right| \nonumber \\
&\le \sum_{i=1}^{\alpha} \left(\kappa C_1 D^{-(n+r)(1 + \eps)}|\PP|^{\frac{s(r+1)}{s(n+r) - n+1} + \eps}|\Gamma|^{\frac{(s-1)(n+r)}{s(n+r)-n+1}}\right) \nonumber \\
&\qquad \qquad \qquad + \sum_{i=1}^{\alpha} \left(|\PP \cap \conncomp_i|\sum_{j=2}^{n-1} \theta_j + C_2\left(|\PP \cap \mathcal{C}_i| + \frac{C_0|\Gamma|}{D^{n-1}}\right)\right) \eqcomment{by \eqref{eqn:Icc-rn}} \nonumber \\
&= \alpha \cdot \kappa C_1 D^{-(n+r)(1 + \eps)}|\PP|^{\frac{s(r+1)}{s(n+r) - n+1} + \eps}|\Gamma|^{\frac{(s-1)(n+r)}{s(n+r)-n+1}}  \nonumber \\
&\qquad \qquad \qquad + \sum_{i=1}^{\alpha} \left(|\PP \cap \conncomp_i|\sum_{j=2}^{n-1} \theta_j + C_2\left(|\PP \cap \mathcal{C}_i| + \frac{C_0|\Gamma|}{D^{n-1}}\right)\right) \nonumber \\
&\le \kappa C_1 D^{-(n+r)\eps}|\PP|^{\frac{s(r+1)}{s(n+r) - n+1} + \eps}|\Gamma|^{\frac{(s-1)(n+r)}{s(n+r)-n+1}} + \left|\PP \cap \left(\bigcup _{i=1}^{\alpha} \conncomp_i\right)\right|\sum_{j=2}^{n-1} \theta_j \nonumber \\
&\qquad \qquad \qquad + C_2\left|\PP \cap \left(\bigcup_{i=1}^{\alpha} \conncomp_i \right)\right| + \alpha C_2\left(\frac{C_0|\Gamma|}{D^{n-1}}\right) \nonumber \\
&\le \kappa_2 C_1 D^{-(n+r)\eps}|\PP|^{\frac{s(r+1)}{s(n+r) - n+1} + \eps}|\Gamma|^{\frac{(s-1)(n+r)}{s(n+r)-n+1}} + \left|\PP \cap \left(\bigcup _{i=1}^{\alpha} \conncomp_i\right)\right|\sum_{j=2}^{n-1} \theta_j \nonumber \\
&\qquad \qquad \qquad  + C_2\left|\PP \cap \left(\bigcup_{i=1}^{\alpha} \conncomp_i \right)\right| + \kappa D^{r+1}C_2|\Gamma| \nonumber \\
&\le \kappa_2 C_1 D^{-(n+r)\eps}|\PP|^{\frac{s(r+1)}{s(n+r) - n+1} + \eps}|\Gamma|^{\frac{(s-1)(n+r)}{s(n+r)-n+1}} + \left|\PP \cap \left(\bigcup _{i=1}^{\alpha} \conncomp_i\right)\right|\sum_{j=2}^{n-1} \theta_j  \nonumber \\
&\qquad \qquad \qquad + C_2\left|\PP \cap \left(\bigcup_{i=1}^{\alpha} \conncomp_i \right)\right| + \kappa_3 D^{r+1}C_2|\Gamma|^{\frac{(s-1)(n+r)}{s(n+r) - n+1}}|\PP|^{\frac{s(r+1)}{s(n+r) - n + 1}} \eqcomment{by \eqref{eqn:gamma-ub-wlog-rn}} \nonumber \\
&\le \frac{C_1}{4}|\PP|^{\frac{s(r+1)}{s(n+r) - n+1} + \eps}|\Gamma|^{\frac{(s-1)(n+r)}{s(n+r)-n+1}} + \left|\PP \cap \left(\bigcup _{i=1}^{\alpha} \conncomp_i\right)\right|\sum_{j=2}^{n-1} \theta_j + C_2 \left|\PP \cap \left(\bigcup _{i=1}^{\alpha} \conncomp_i\right)\right| , \label{eqn:incidences-in-cc-rn}
\end{align}
where the last line holds as long $D \ge (8\kappa_2)^{\frac{1}{(n+r)\eps}}$ and $C_1 \ge 8\kappa_3D^{r+1}C_2$.

Define $\Gamma' := \{\gamma \in \Gamma \suchthat \gamma \subseteq Z(P)\}$. Apply Corollary \ref{cor:smooth-decomposition} to decompose $Z(P)$, and let $Z_1$ denote the union of all the irreducible $1$-dimensional components in the decomposition. Define $\Gamma_1' := \{\gamma \in \Gamma \suchthat \gamma \subseteq Z_1\} \subseteq \Gamma'$. Now, by Corollary \ref{cor:irreducible-components}, we have that the number of irreducible 1-dimensional components of $Z_1$ is at most $\kappa D^{\kappa}$, thus we have that
\begin{equation} \label{eqn:incidences-on-zp-1-from-inside-rn}
|I(\PP \cap Z(P), \Gamma_1')| \le |\PP \cap Z(P)| \cdot |\Gamma_1'| \le \kappa |\PP \cap Z(P)| D^{\kappa} = \kappa_4|\PP \cap Z(P)|.
\end{equation}

For all $1 < j < n$, let $Z_j$ denote the union of all the irreducible $j$-dimensional components of $Z(P)$. Correspondingly, define $\Gamma_j' := \{\gamma \in \Gamma \suchthat \gamma \subseteq Z_j\} \subseteq \Gamma'$. By assumption, we have
\begin{equation} \label{eqn:incidences-on-zp-j-from-inside-rn}
|I(\PP \cap Z(P), \Gamma_j')| \le |\PP \cap Z(P)| \cdot |\Gamma_j'| \le \theta_j |\PP \cap Z(P)|.
\end{equation}

Also, by Lemma \ref{lem:part} and Corollary \ref{cor:khovanskii}, we have that each $\gamma \in \Gamma \setminus \Gamma'$ intersects $Z(P)$ in at most $\kappa D^{r+1}$ points, thus
\begin{equation}
    \label{eqn:incidences-on-zp-from-outside-rn}
    |I(\PP \cap Z(P), \Gamma \setminus \Gamma')| \le \kappa|\Gamma \setminus \Gamma'|D^{r+1} \le \kappa_5 |\Gamma|. 
\end{equation}

Finally, from \eqref{eqn:incidences-in-cc-rn}, \eqref{eqn:incidences-on-zp-1-from-inside-rn}, \eqref{eqn:incidences-on-zp-from-outside-rn},and \eqref{eqn:incidences-on-zp-from-outside-rn}, we deduce
\begin{align}
    &|I(\PP, \Gamma)| \nonumber \\
    &= \left|I(\PP, \Gamma) \cap \left(\bigcup _{i=1}^{\alpha} \conncomp_i\right)\right| + \sum_{j=1}^{n-1} |I(\PP \cap Z(P), \Gamma_j')| + |I(\PP \cap Z(P), \Gamma \setminus \Gamma')| \nonumber \\
    &\le \frac{C_1}{4}|\PP|^{\frac{s(r+1)}{s(n+r) - n+1} + \eps}|\Gamma|^{\frac{(s-1)(n+r)}{s(n+r)-n+1}} + \left|\PP \cap \left(\bigcup _{i=1}^{\alpha} \conncomp_i\right)\right|\sum_{j=2}^{n-1} \theta_j + C_2 \left|\PP \cap \left(\bigcup _{i=1}^{\alpha} \conncomp_i\right)\right| \nonumber \\
    &\qquad \qquad \qquad + |I(\PP \cap Z(P), \Gamma_1')| + \sum_{j=2}^{n-1} |I(\PP \cap Z(P), \Gamma_j')| + |I(\PP \cap Z(P), \Gamma \setminus \Gamma')| \nonumber \\
    &\le \frac{C_1}{4}|\PP|^{\frac{s(r+1)}{s(n+r) - n+1} + \eps}|\Gamma|^{\frac{(s-1)(n+r)}{s(n+r)-n+1}} + \left|\PP \cap \left(\bigcup _{i=1}^{\alpha} \conncomp_i\right)\right|\sum_{j=2}^{n-1} \theta_j + C_2 \left|\PP \cap \left(\bigcup _{i=1}^{\alpha} \conncomp_i\right)\right| \nonumber \\
    &\qquad \qquad \qquad + \kappa_4|\PP \cap Z(P)| + \sum_{j=2}^{n-1} \theta_j |\PP \cap Z(P)| + \kappa_5 |\Gamma| \nonumber \\
    &\le \frac{C_1}{4}|\PP|^{\frac{s(r+1)}{s(n+r) - n+1} + \eps}|\Gamma|^{\frac{(s-1)(n+r)}{s(n+r)-n+1}} + \left|\PP\right|\sum_{j=2}^{n-1} \theta_j + C_2 \left(\left|\PP\right| + |\Gamma| \right)\nonumber
\end{align}
where the last line holds as long as $C_2 \ge \max\{\kappa_4, \kappa_5\}$, thus closing the induction.
\end{proof}

\subsubsection{Proof of Pfaffian joints theorem}

\begin{proof}[Proof of Theorem \ref{thm:pfaffian-joints}]
As before, we will use Theorem \ref{thm:pfaff-part-bbz} to partition $\{\Gamma_i\}_{i \in [n]}$; to distinguish, we let $C_0$ denote the maximum of all constants in Theorem \ref{thm:pfaff-part-bbz}, and the constant we want to find by $C$. In general, various intermediary constants that depend on $\eps$, $n$, $m$, $\alpha$, $\beta$, and $r$, shall all be denoted by $\kappa$ without disambiguation; we will use constants $\kappa_i$ only when we need to disambiguate.

We use induction on the total number of elements in $\Gamma_1$. For the base case, let $|\Gamma_1| = \kappa$. Consider a particular curve $c_1 \in \Gamma_1$. With each curve $c_2 \in \Gamma_2$, by Theorem \ref{thm:khovanskii}$, c_1$ can have at most $\kappa$ intersections. This means that $c_1$ can have at most $\kappa|\Gamma_2|$ intersections with curves in $\Gamma_2$. Similarly, $c_1$ can have at most $\kappa|\Gamma_{i}|$ intersections with curves in $\Gamma_i$ for any $2 \le i \le n$. For a point on $c_1$ to be a joint, the point has to lie on a curve in each $\Gamma_i$ for $2 \le i \le n$. This in turn means that the number of joints on $c_1$ is bounded by \[\kappa\min\{|\Gamma_i|\}_{2 \le i \le n} \le \kappa \left(\prod_{i=2}^n |\Gamma_i|\right)^{\frac{1}{n-1}}.\] Since $|\Gamma_1| = \kappa$, we have that the number of joints formed by $\{\Gamma_i\}_{i \in [n]}$ is at most $\kappa_1 \left(\prod_{i=2}^n |\Gamma_i|\right)^{\frac{1}{n-1}}$; since $\frac{n+r}{n(n-1)} \ge \frac{1}{n-1}$, and $\frac{2}{n+1} \ge \frac{1}{n-1}$ whenever $n \ge 3$, this proves the base case as long as we choose $C \ge \kappa_1$.

Now suppose that the claim is true for all smaller values of $|\Gamma_1|$. For the induction step, we use Theorem \ref{thm:pfaff-part-bbz} to obtain a $P \in \Pf_n(D, \vec{q})$, with $D = D(\eps, n, m, \alpha, \beta, r)$ to be specified later, such that each connected component of $\R^n \setminus Z(P)$ intersects at most $C_0|\Gamma_i|D^{-(n-1)}$ Pfaffian curves from each $\Gamma_i$. Suppose $D \ge C_0^{\frac{1}{n-1}} + 1$. After partitioning, we then have that each connected component has at most $C_0|\Gamma_1|D^{-(n-1)} < |\Gamma_1|$ curves from $\Gamma_1$, and this means that we can use the induction hypothesis. 

Assume without loss of generality that $|\Gamma_1| = \min\left\{|\Gamma_i|\right\}_{i \in [n]}$, and for brevity, use \[\mu := \max\left\{\frac{n + r}{n(n - 1)}, \frac{2}{n+1}\right\}.\] Using the induction hypothesis, we have that in each connected component of $\R^n \setminus Z(P)$, there are at most $\mathcal{J}_{cc}$ joints, where
\begin{align}
\mathcal{J}_{cc} &\le C \cdot \left(C_0|\Gamma_1|D^{-(n-1)}\right)^{\eps} \cdot \prod_{i = 1}^n \left(C_0|\Gamma_i|D^{-(n-1)}\right)^{\mu} \nonumber \\
&= C\cdot C_0^{\eps + n\mu}\cdot \left(|\Gamma_1|D^{-(n-1)}\right)^{\eps} \cdot \prod_{i = 1}^n \left(|\Gamma_i|D^{-(n-1)}\right)^{\mu}\nonumber \\
&= C\cdot C_0^{\eps + n\mu} \cdot \left(D^{-\eps(n-1) - n(n-1)\mu}\right)\cdot |\Gamma_1|^{\eps} \cdot \prod_{i = 1}^n |\Gamma_i|^{\mu} \nonumber \\
&\le C\cdot C_0^{\eps + n\mu} \cdot \left(D^{-\eps(n-1) - (n+r)}\right)\cdot |\Gamma_1|^{\eps} \cdot \prod_{i = 1}^n |\Gamma_i|^{\mu} \eqcomment{since $\mu \ge \frac{n+r}{n(n-1)}$}. \label{eqn:jcc}
\end{align}
By Theorem \ref{thm:bezout-pfaffian} (and also \cite[Theorem 3.4]{gabrielov2004complexity}), we have that $\R^n \setminus Z(P)$ has at most $\kappa D^{n + r}$ connected components, thus the sum total number of joints in all the connected components of $\R^n \setminus Z(P)$, say $\mathcal{J}_{cc}^{tot}$, is at most
\begin{align}
    \mathcal{J}_{cc}^{tot} & \le \kappa D^{n+r} \cdot \mathcal{J}_{cc} \nonumber \\
&\le \kappa D^{n+r} \cdot C\cdot C_0^{\eps + n\mu} \cdot \left(D^{-\eps(n-1) - (n+r)}\right)\cdot |\Gamma_1|^{\eps} \cdot \prod_{i = 1}^n |\Gamma_i|^{\mu} \eqcomment{by \eqref{eqn:jcc}} \nonumber \\
&= \kappa_2 \cdot C\cdot C_0^{\eps + n\mu} \cdot D^{-\eps(n-1)}\cdot |\Gamma_1|^{\eps} \cdot \prod_{i = 1}^n |\Gamma_i|^{\mu} \nonumber \\
&\le \frac{C}{4} \cdot |\Gamma_1|^{\eps} \cdot \prod_{i = 1}^n |\Gamma_i|^{\mu}, \label{eqn:jcc-tot}
\end{align}
where the last inequality is true as long as $D \ge \left(4\kappa_2 \cdot C_0^{\eps + n\mu}\right)^{\frac{1}{\eps(n-1)}}$.

Having counted the number of joints in $\R^n \setminus Z(P)$, we shall now count the number of joints on $Z(P)$. Using Corollary \ref{cor:smooth-decomposition}, we decompose $Z(P)$ into $\kappa$ number of Pfaffian sets, $Z_1, \ldots, Z_{\kappa}$, such that every joint on $Z(P)$ is on the smooth part of some $Z_i$. We will fix a $Z_a$ and count the number of joints on the smooth part of $Z_a$; the total number of joints on $Z(P)$ will just be at most $\kappa$ times the number of joints on the smooth part of $Z_a$.

If a joint is a smooth point of $Z_a$, then it is not possible that all the curves that form the joint are contained in $Z_a$. This is because the curves were indeed all contained in $Z_a$, then the span of their tangent spaces would have been contained in the tangent space of $Z_a$ at the same point, which is at most $k$-dimensional (by the guarantee of Corollary \ref{cor:smooth-decomposition}), thus contradicting the definition of joint. By Corollary \ref{cor:khovanskii}, we have that a curve that is not contained in $Z_a$ can have at most $\kappa D^{\kappa}$ intersections (the $\kappa$ values could be different, but we don't disambiguate since it doesn't effect the final result) with $Z_a$. This implies that the number of joints for which the curve from $\Gamma_1$ is not contained in $Z_a$ is bounded by $\kappa |\Gamma_1|$ (recall that $D$ is a constant in $\eps, n, m, \alpha, \beta, r$). Also, at the same time, for any $2 \le i_1 < i_2 \le n$, using Corollary \ref{cor:khovanskii}, we also know that the number of joints is bounded by $\kappa |\Gamma_{i_1}||\Gamma_{i_2}|$; this is because each curve in $\Gamma_{i_1}$ can intersect each curve in $\Gamma_{i_2}$ in at most $\kappa$ points.

Thus we have that the number of joints on $\smooth{Z_a}$ for which the curve from $\Gamma_1$ is not contained in $Z_a$ is bounded by 
\begin{align}
\min\left\{\kappa |\Gamma_1|, \{\kappa|\Gamma_{i_1}||\Gamma_{i_2}|\}_{2 \le i_1 < i_2 \le n}\right\} &\le \kappa \cdot |\Gamma_1|^{\frac{2}{n+1}} \cdot \prod_{2 \le i_1 < i_2 \le n} \left(|\Gamma_{i_1}||\Gamma_{i_2}|\right)^{\frac{2}{(n+1)(n-2)}} \nonumber \\
&= \kappa \cdot |\Gamma_1|^{\frac{2}{n+1}}\cdot \prod_{i=2}^n \left(|\Gamma_i|^{(n-2)}\right)^{\frac{2}{(n+1)(n-2)}} \nonumber \\
&= \kappa \prod_{i=1}^n |\Gamma_i|^{\frac{2}{n+1}}, \nonumber
\end{align}
and consequently, the total number of joints on $\smooth{Z_a}$ is at most $\kappa \prod_{i=1}^n |\Gamma_i|^{\frac{2}{n+1}}$. Finally, since $Z(P)$ is composed of $Z_1, \ldots, Z_{\kappa}$, the total number of joints on $Z(P)$, say $\mathcal{J}_{Z(P)}$, is at most 
\[ \mathcal{J}_{Z(P)} \le \kappa_3 \cdot \prod_{i=1}^n |\Gamma_i|^{\frac{2}{n+1}} \le \frac{C}{4} \cdot \prod_{i=1}^n |\Gamma_i|^{\frac{2}{n+1}} \le \frac{C}{4} \cdot \prod_{i=1}^n |\Gamma_i|^{\mu},
\]
where the second inequality follows as long as $C \ge 4\kappa_3$.

Finally, we get that the total number of joints is at most
\[\mathcal{J}_{cc}^{tot} + \mathcal{J}_{Z(P)} \le \frac{C}{4} \cdot |\Gamma_1|^{\eps} \cdot \prod_{i=1}^n |\Gamma_i|^{\mu} + \frac{C}{4} \cdot \prod_{i=1}^n |\Gamma_i|^{\mu} \le C \cdot |\Gamma_1|^{\eps} \cdot \prod_{i=1}^n |\Gamma_i|^{\mu}.\]
Thus as long as we choose 
\[
D \ge \max\left\{C_0^{\frac{1}{n-1}} + 1, \left(4\kappa_2 \cdot C_0^{\eps + n\mu}\right)^{\frac{1}{\eps(n-1)}}\right\} \text{ and } C \ge \max\left\{\kappa_1, 4\kappa_3\right\},
\]
we can close the induction.
\end{proof}

\textbf{Conflicts of interest:} None.

\bibliography{main}

@article{gabrielov2003relative,
  title={Relative closure and the complexity of Pfaffian elimination},
  author={Gabrielov, Andrei},
  journal={ALGORITHMS AND COMBINATORICS},
  volume={25},
  pages={441--460},
  year={2003},
  publisher={Springer}
}

@article {matousekmultilevel2015,
    AUTHOR = {Matou\v{s}ek, Ji\v{r}\'{i} and Pat\'akov\'a, Zuzana},
     TITLE = {Multilevel polynomial partitions and simplified range
              searching},
   JOURNAL = {Discrete Comput. Geom.},
  FJOURNAL = {Discrete \& Computational Geometry. An International Journal
              of Mathematics and Computer Science},
    VOLUME = {54},
      YEAR = {2015},
    NUMBER = {1},
     PAGES = {22--41},
      ISSN = {0179-5376,1432-0444},
       DOI = {10.1007/s00454-015-9701-2},
       URL = {https://doi.org/10.1007/s00454-015-9701-2},
}

@book {krantz2002primer,
    AUTHOR = {Krantz, Steven G. and Parks, Harold R.},
     TITLE = {A primer of real analytic functions},
    SERIES = {Birkh\"auser Advanced Texts: Basler Lehrb\"ucher.
              [Birkh\"auser Advanced Texts: Basel Textbooks]},
   EDITION = {Second},
 PUBLISHER = {Birkh\"auser Boston, Inc., Boston, MA},
      YEAR = {2002},
     PAGES = {xiv+205},
      ISBN = {0-8176-4264-1},
       DOI = {10.1007/978-0-8176-8134-0},
       URL = {https://doi.org/10.1007/978-0-8176-8134-0},
}

@article {fadellhusseini1988,
    AUTHOR = {Fadell, Edward and Husseini, Sufian},
     TITLE = {An ideal-valued cohomological index theory with applications
              to {B}orsuk-{U}lam and {B}ourgin-{Y}ang theorems},
   JOURNAL = {Ergodic Theory Dynam. Systems},
  FJOURNAL = {Ergodic Theory and Dynamical Systems},
    VOLUME = {8$^*$},
      YEAR = {1988},
     PAGES = {73--85},
      ISSN = {0143-3857,1469-4417},
       DOI = {10.1017/S0143385700009342},
       URL = {https://doi.org/10.1017/S0143385700009342},
}

@article {walshpolynomial2020,
    AUTHOR = {Walsh, Miguel N.},
     TITLE = {The polynomial method over varieties},
   JOURNAL = {Invent. Math.},
  FJOURNAL = {Inventiones Mathematicae},
    VOLUME = {222},
      YEAR = {2020},
    NUMBER = {2},
     PAGES = {469--512},
      ISSN = {0020-9910,1432-1297},
       DOI = {10.1007/s00222-020-00975-6},
       URL = {https://doi.org/10.1007/s00222-020-00975-6},
}

@article {basusombrapolynomial2016,
    AUTHOR = {Basu, Saugata and Sombra, Mart\'in},
     TITLE = {Polynomial partitioning on varieties of codimension two and
              point-hypersurface incidences in four dimensions},
   JOURNAL = {Discrete Comput. Geom.},
  FJOURNAL = {Discrete \& Computational Geometry. An International Journal
              of Mathematics and Computer Science},
    VOLUME = {55},
      YEAR = {2016},
    NUMBER = {1},
     PAGES = {158--184},
      ISSN = {0179-5376,1432-0444},
       DOI = {10.1007/s00454-015-9736-4},
       URL = {https://doi.org/10.1007/s00454-015-9736-4},
}

@article {zaranfpsz2017,
    AUTHOR = {Fox, Jacob and Pach, J\'anos and Sheffer, Adam and Suk, Andrew
              and Zahl, Joshua},
     TITLE = {A semi-algebraic version of {Z}arankiewicz's problem},
   JOURNAL = {J. Eur. Math. Soc. (JEMS)},
  FJOURNAL = {Journal of the European Mathematical Society (JEMS)},
    VOLUME = {19},
      YEAR = {2017},
    NUMBER = {6},
     PAGES = {1785--1810},
      ISSN = {1435-9855,1435-9863},
       DOI = {10.4171/JEMS/705},
       URL = {https://doi.org/10.4171/JEMS/705},
}

@article{guth2010algebraic,
  title={Algebraic methods in discrete analogs of the {K}akeya problem},
  author={Guth, Larry and Katz, Nets Hawk},
  journal={Advances in Mathematics},
  volume={225},
  number={5},
  pages={2828--2839},
  year={2010},
  publisher={Elsevier}
}

@article{bennettkakeya,
author = {Jonathan Bennett and Anthony Carbery and Terence Tao},
title = {{On the multilinear restriction and {K}akeya conjectures}},
volume = {196},
journal = {Acta Mathematica},
number = {2},
publisher = {Institut Mittag-Leffler},
pages = {261 -- 302},
year = {2006},
doi = {10.1007/s11511-006-0006-4},
URL = {https://doi.org/10.1007/s11511-006-0006-4}
}

@article{feldman2005improved,
  title={An improved bound for joints in arrangements of lines in space},
  author={Feldman, Sharona and Sharir, Micha},
  journal={Discrete \& Computational Geometry},
  volume={33},
  pages={307--320},
  year={2005},
  publisher={Springer}
}

@article{sharir1994joints,
  title={On joints in arrangements of lines in space and related problems},
  author={Sharir, Micha},
  journal={Journal of Combinatorial Theory, Series A},
  volume={67},
  number={1},
  pages={89--99},
  year={1994},
  publisher={Elsevier}
}

@article{quilodran2010joints,
  title={The Joints Problem in $\mathbb{R}^n$},
  author={Quilodr{\'a}n, Ren{\'e}},
  journal={SIAM Journal on Discrete Mathematics},
  volume={23},
  number={4},
  pages={2211--2213},
  year={2010},
  publisher={SIAM}
}

@article{kaplan2010lines,
  title={On lines and joints},
  author={Kaplan, Haim and Sharir, Micha and Shustin, Eugenii},
  journal={Discrete \& Computational Geometry},
  volume={44},
  number={4},
  pages={838--843},
  year={2010},
  publisher={Springer}
}

@article{roy1994finding,
  title={Finding irreducible components of some real transcendental varieties},
  author={Roy, Marie-Fran{\c{c}}oise and Vorobjov, Nicolai},
  journal={Computational complexity},
  volume={4},
  pages={107--132},
  year={1994},
  publisher={Springer}
}

@incollection {binyamininovikovicm,
    AUTHOR = {Binyamini, Gal and Novikov, Dmitry},
     TITLE = {Tameness in geometry and arithmetic: beyond o-minimality},
 BOOKTITLE = {I{CM}---{I}nternational {C}ongress of {M}athematicians. {V}ol.
              3. {S}ections 1--4},
     PAGES = {1440--1461},
 PUBLISHER = {EMS Press, Berlin},
      YEAR = {2023},
      ISBN = {978-3-98547-061-7; 978-3-98547-561-2; 978-3-98547-058-7},
}

@article {gabvorobapprox,
    AUTHOR = {Gabrielov, Andrei and Vorobjov, Nicolai},
     TITLE = {Approximation of definable sets by compact families, and upper
              bounds on homotopy and homology},
   JOURNAL = {J. Lond. Math. Soc. (2)},
  FJOURNAL = {Journal of the London Mathematical Society. Second Series},
    VOLUME = {80},
      YEAR = {2009},
    NUMBER = {1},
     PAGES = {35--54},
      ISSN = {0024-6107,1469-7750},
       DOI = {10.1112/jlms/jdp006},
}

@article{clarkson1990combinatorial,
  title={Combinatorial complexity bounds for arrangements of curves and spheres},
  author={Clarkson, Kenneth L and Edelsbrunner, Herbert and Guibas, Leonidas J and Sharir, Micha and Welzl, Emo},
  journal={Discrete \& Computational Geometry},
  volume={5},
  number={2},
  pages={99--160},
  year={1990},
  publisher={Springer},
  DOI = {10.1007/BF02187783},
}

@book {markermodeltheory,
    AUTHOR = {Marker, David},
     TITLE = {Model theory},
    SERIES = {Graduate Texts in Mathematics},
    VOLUME = {217},
      NOTE = {An introduction},
 PUBLISHER = {Springer-Verlag, New York},
      YEAR = {2002},
     PAGES = {viii+342},
      ISBN = {0-387-98760-6},
}

@article{chazelle1992counting,
  title={Counting and cutting cycles of lines and rods in space},
  author={Chazelle, Bernard and Edelsbrunner, Herbert and Guibas, Leonidas J and Pollack, Richard and Seidel, Raimund and Sharir, Micha and Snoeyink, Jack},
  journal={Computational Geometry},
  volume={1},
  number={6},
  pages={305--323},
  year={1992},
  publisher={Elsevier}
}

@incollection {wolffkakeya,
    AUTHOR = {Wolff, Thomas},
     TITLE = {Recent work connected with the {K}akeya problem},
 BOOKTITLE = {Prospects in mathematics ({P}rinceton, {NJ}, 1996)},
     PAGES = {129--162},
 PUBLISHER = {Amer. Math. Soc., Providence, RI},
      YEAR = {1999},
      ISBN = {0-8218-0975-X},
}

@article {elekessumproduct,
    AUTHOR = {Elekes, Gy\"orgy},
     TITLE = {On the number of sums and products},
   JOURNAL = {Acta Arith.},
  FJOURNAL = {Acta Arithmetica},
    VOLUME = {81},
      YEAR = {1997},
    NUMBER = {4},
     PAGES = {365--367},
      ISSN = {0065-1036,1730-6264},
       DOI = {10.4064/aa-81-4-365-367},
}

@article {dvirincidencesurvey,
    AUTHOR = {Dvir, Zeev},
     TITLE = {Incidence theorems and their applications},
   JOURNAL = {Found. Trends Theor. Comput. Sci.},
  FJOURNAL = {Foundations and Trends${}^\circledR$ in Theoretical Computer
              Science},
    VOLUME = {6},
      YEAR = {2010},
    NUMBER = {4},
     PAGES = {257--393},
      ISSN = {1551-305X,1551-3068},
      ISBN = {978-1-60198-620-7; 978-1-60198-621-4},
       DOI = {10.1561/0400000056},
}

@article {sharirrd2016incidencessolomon,
    AUTHOR = {Sharir, Micha and Sheffer, Adam and Solomon, Noam},
     TITLE = {Incidences with curves in {$\mathbb{R}^d$}},
   JOURNAL = {Electron. J. Combin.},
  FJOURNAL = {Electronic Journal of Combinatorics},
    VOLUME = {23},
      YEAR = {2016},
    NUMBER = {4},
     PAGES = {Paper 4.16, 16},
      ISSN = {1077-8926},
       DOI = {10.37236/5840},
}

@article {pachsharir1998pointscurves,
    AUTHOR = {Pach, J\'anos and Sharir, Micha},
     TITLE = {On the number of incidences between points and curves},
   JOURNAL = {Combin. Probab. Comput.},
  FJOURNAL = {Combinatorics, Probability and Computing},
    VOLUME = {7},
      YEAR = {1998},
    NUMBER = {1},
     PAGES = {121--127},
      ISSN = {0963-5483,1469-2163},
       DOI = {10.1017/S0963548397003192},
}

@inproceedings {khovanskiiicm,
    AUTHOR = {Khovanski{\u\i}, Askold G},
     TITLE = {Fewnomials and {P}faff manifolds},
 BOOKTITLE = {Proceedings of the {I}nternational {C}ongress of
              {M}athematicians, {V}ol.\ 1, 2 ({W}arsaw, 1983)},
     PAGES = {549--564},
 PUBLISHER = {PWN, Warsaw},
      YEAR = {1984},
      ISBN = {83-01-05523-5},
}

@article {ilyashenko,
    AUTHOR = {Ilyashenko, Yu.},
     TITLE = {Centennial history of {H}ilbert's 16th problem},
   JOURNAL = {Bull. Amer. Math. Soc. (N.S.)},
  FJOURNAL = {American Mathematical Society. Bulletin. New Series},
    VOLUME = {39},
      YEAR = {2002},
    NUMBER = {3},
     PAGES = {301--354},
      ISSN = {0273-0979,1088-9485},
       DOI = {10.1090/S0273-0979-02-00946-1},
}

@article {szemereditrotter,
    AUTHOR = {Szemer\'edi, Endre and Trotter, Jr., William T.},
     TITLE = {Extremal problems in discrete geometry},
   JOURNAL = {Combinatorica},
  FJOURNAL = {Combinatorica. An International Journal of the J\'anos Bolyai
              Mathematical Society},
    VOLUME = {3},
      YEAR = {1983},
    NUMBER = {3-4},
     PAGES = {381--392},
      ISSN = {0209-9683},
       DOI = {10.1007/BF02579194},
}

@article{doi:10.1137/19M1257548,
author = {Aronov, Boris and Ezra, Esther and Zahl, Joshua},
title = {Constructive Polynomial Partitioning for Algebraic Curves in $\mathbb{R}^3$ with Applications},
journal = {SIAM Journal on Computing},
volume = {49},
number = {6},
pages = {1109-1127},
year = {2020},
doi = {10.1137/19M1257548},
URL = {https://doi.org/10.1137/19M1257548},
eprint = {https://doi.org/10.1137/19M1257548},}

@article{agarwal2021efficient,
  title={Efficient algorithm for generalized polynomial partitioning and its applications},
  author={Agarwal, Pankaj K and Aronov, Boris and Ezra, Esther and Zahl, Joshua},
  journal={SIAM Journal on Computing},
  volume={50},
  number={2},
  pages={760--787},
  year={2021},
  publisher={SIAM}
}

@article {scanlon,
    AUTHOR = {Scanlon, T.},
     TITLE = {O-minimality},
  JOURNAL = {Gazette des Math\'ematiciens},
    NUMBER = {149},
      YEAR = {2016},
     PAGES = {33--39},
      ISSN = {0224-8999,2275-0622},
}

@article {chernikov2024modelduke,
    AUTHOR = {Chernikov, Artem and Peterzil, Ya'acov and Starchenko, Sergei},
     TITLE = {Model-theoretic {E}lekes-{S}zab\'o{} for stable and o-minimal
              hypergraphs},
   JOURNAL = {Duke Math. J.},
  FJOURNAL = {Duke Mathematical Journal},
    VOLUME = {173},
      YEAR = {2024},
    NUMBER = {3},
     PAGES = {419--512},
      ISSN = {0012-7094,1547-7398},
       DOI = {10.1215/00127094-2023-0018},
}

@book {matousekdiscrete,
    AUTHOR = {Matou\v{s}ek, Ji\v{r}\'{i}},
     TITLE = {Lectures on discrete geometry},
    SERIES = {Graduate Texts in Mathematics},
    VOLUME = {212},
 PUBLISHER = {Springer-Verlag, New York},
      YEAR = {2002},
     PAGES = {xvi+481},
      ISBN = {0-387-95373-6},
       DOI = {10.1007/978-1-4613-0039-7},
}

@article {MR1889560,
    AUTHOR = {Miller, Chris and Speissegger, Patrick},
     TITLE = {Pfaffian differential equations over exponential o-minimal
              structures},
   JOURNAL = {J. Symbolic Logic},
  FJOURNAL = {The Journal of Symbolic Logic},
    VOLUME = {67},
      YEAR = {2002},
    NUMBER = {1},
     PAGES = {438--448},
      ISSN = {0022-4812,1943-5886},
       DOI = {10.2178/jsl/1190150053},
}

@article {MR1658452,
    AUTHOR = {Lion, Jean-Marie and Rolin, Jean-Philippe},
     TITLE = {Volumes, feuilles de {R}olle de feuilletages analytiques et
              th\'eor\`eme de {W}ilkie},
   JOURNAL = {Ann. Fac. Sci. Toulouse Math. (6)},
  FJOURNAL = {Annales de la Facult\'e{} des Sciences de Toulouse.
              Math\'ematiques. S\'erie 6},
    VOLUME = {7},
      YEAR = {1998},
    NUMBER = {1},
     PAGES = {93--112},
      ISSN = {0240-2963,2258-7519},
}

@article{karpinski1997polynomial,
  title={Polynomial bounds for VC dimension of sigmoidal and general {P}faffian neural networks},
  author={Karpinski, Marek and Macintyre, Angus},
  journal={Journal of Computer and System Sciences},
  volume={54},
  number={1},
  pages={169--176},
  year={1997},
  publisher={Elsevier}
}

@article{wilkie1996model,
  title={Model completeness results for expansions of the ordered field of real numbers by restricted {P}faffian functions and the exponential function},
  author={Wilkie, Alex},
  journal={Journal of the American Mathematical Society},
  volume={9},
  number={4},
  pages={1051--1094},
  year={1996}
}

@article {speissegger1999pfaffian,
    AUTHOR = {Speissegger, Patrick},
     TITLE = {The {P}faffian closure of an o-minimal structure},
   JOURNAL = {J. Reine Angew. Math.},
  FJOURNAL = {Journal f\"ur die Reine und Angewandte Mathematik. [Crelle's
              Journal]},
    VOLUME = {508},
      YEAR = {1999},
     PAGES = {189--211},
      ISSN = {0075-4102,1435-5345},
}

@incollection {grothendieck1997esquisse,
    AUTHOR = {Grothendieck, Alexandre},
     TITLE = {Esquisse d'un programme},
 BOOKTITLE = {Geometric {G}alois actions, 1},
    SERIES = {London Math. Soc. Lecture Note Ser.},
    VOLUME = {242},
     PAGES = {5--48},
      NOTE = {With an English translation on pp.\ 243--283},
 PUBLISHER = {Cambridge Univ. Press, Cambridge},
      YEAR = {1997},
      ISBN = {0-521-59642-4},
}

@article{chernikov2021ramsey,
  title={Ramsey growth in some NIP structures},
  author={Chernikov, Artem and Starchenko, Sergei and Thomas, Margaret EM},
  journal={Journal of the Institute of Mathematics of Jussieu},
  volume={20},
  number={1},
  pages={1--29},
  year={2021},
  publisher={Cambridge University Press}
}

@article{guth2018restriction,
author = {Larry Guth},
title = {{Restriction estimates using polynomial partitioning II}},
volume = {221},
journal = {Acta Mathematica},
number = {1},
publisher = {Institut Mittag-Leffler},
pages = {81 -- 142},
year = {2018},
doi = {10.4310/ACTA.2018.v221.n1.a3},
URL = {https://doi.org/10.4310/ACTA.2018.v221.n1.a3}
}

@article{guth2016restriction,
  title={A restriction estimate using polynomial partitioning},
  author={Guth, Larry},
  journal={Journal of the American Mathematical Society},
  volume={29},
  number={2},
  pages={371--413},
  year={2016}
}

@article{sheffer2021distinct,
  title={Distinct distances in the complex plane},
  author={Sheffer, Adam and Zahl, Joshua},
  journal={Transactions of the American Mathematical Society},
  volume={374},
  number={09},
  pages={6691--6725},
  year={2021}
}

@article{tidor2022joints,
  title={Joints of varieties},
  author={Tidor, Jonathan and Yu, Hung-Hsun Hans and Zhao, Yufei},
  journal={Geometric and Functional Analysis},
  volume={32},
  number={2},
  pages={302--339},
  year={2022},
  publisher={Springer}
}

@article{afshani2023lower,
  title={Lower bounds for semialgebraic range searching and stabbing problems},
  author={Afshani, Peyman and Cheng, Pingan},
  journal={Journal of the ACM},
  volume={70},
  number={2},
  pages={1--26},
  year={2023},
  publisher={ACM New York, NY}
}

@article{kaplan2012simple,
  title={Simple proofs of classical theorems in discrete geometry via the Guth--Katz polynomial partitioning technique},
  author={Kaplan, Haim and Matou{\v{s}}ek, Ji{\v{r}}{\'\i} and Sharir, Micha},
  journal={Discrete \& Computational Geometry},
  volume={48},
  pages={499--517},
  year={2012},
  publisher={Springer}
}

@article{barone2016real,
  title={On a real analog of Bezout inequality and the number of connected components of sign conditions},
  author={Barone, Sal and Basu, Saugata},
  journal={Proceedings of the London Mathematical Society},
  volume={112},
  number={1},
  pages={115--145},
  year={2016},
  publisher={Oxford University Press}
}

@article{anderson2023combinatorial,
  title={Combinatorial bounds in distal structures},
  author={Anderson, Aaron},
  journal={The Journal of Symbolic Logic},
  pages={1--29},
  year={2023},
  publisher={Cambridge University Press}
}

@article{balsera2023incidences,
  title={Incidences with {P}faffian Curves and Functions},
  author={Balsera, Alexander},
  journal={arXiv preprint arXiv:2311.05517},
  year={2023}
}

@article {chernikovregularity2020,
    AUTHOR = {Chernikov, Artem and Starchenko, Sergei},
     TITLE = {Regularity lemma for distal structures},
   JOURNAL = {J. Eur. Math. Soc. (JEMS)},
  FJOURNAL = {Journal of the European Mathematical Society (JEMS)},
    VOLUME = {20},
      YEAR = {2018},
    NUMBER = {10},
     PAGES = {2437--2466},
      ISSN = {1435-9855,1435-9863},
       DOI = {10.4171/JEMS/816},
}

@article{chernikov2020cutting,
  title={Cutting lemma and Zarankiewicz’s problem in distal structures},
  author={Chernikov, Artem and Galvin, David and Starchenko, Sergei},
  journal={Selecta Mathematica},
  volume={26},
  pages={1--27},
  year={2020},
  publisher={Springer}
}

@article{basit2021Zar, title={Zarankiewicz’s problem for semilinear hypergraphs}, volume={9}, DOI={10.1017/fms.2021.52}, journal={Forum of Mathematics, Sigma}, publisher={Cambridge University Press}, author={Basit, Abdul and Chernikov, Artem and Starchenko, Sergei and Tao, Terence and Tran, Chieu-Minh}, year={2021}, pages={e59}}

@article{basu2018minimal,
  title={An o-minimal Szemer{\'e}di--Trotter theorem},
  author={Basu, Saugata and Raz, Orit E},
  journal={The Quarterly Journal of Mathematics},
  volume={69},
  number={1},
  pages={223--239},
  year={2018},
  publisher={Oxford University Press}
}

@book{guth2016polynomial,
  title={Polynomial methods in combinatorics},
  author={Guth, Larry},
  volume={64},
  year={2016},
  publisher={American Mathematical Soc.}
}

@book{abhiramthesis,
author = "Abhiram Natarajan",
title = "{Betti numbers of deterministic and random sets in semi-algebraic and o-minimal geometry}",
year = "2020",
    SERIES = {PhD Thesis},

publisher = "Purdue University, USA",
doi = "10.25394/PGS.12252185.v1"
}

@book {sheffer2022polynomial,
    AUTHOR = {Sheffer, Adam},
     TITLE = {Polynomial methods and incidence theory},
    SERIES = {Cambridge Studies in Advanced Mathematics},
    VOLUME = {197},
 PUBLISHER = {Cambridge University Press, Cambridge},
      YEAR = {2022},
     PAGES = {xvi+245},
      ISBN = {978-1-108-83249-6},
       DOI = {10.1017/9781108959988},
}

@article{guth2015erdHos,
  title={On the Erd{\H{o}}s distinct distances problem in the plane},
  author={Guth, Larry and Katz, Nets Hawk},
  journal={Annals of mathematics},
  pages={155--190},
  year={2015},
  publisher={JSTOR}
}

@article{solymosi2012incidence,
  title={An incidence theorem in higher dimensions},
  author={Solymosi, J{\'o}zsef and Tao, Terence},
  journal={Discrete \& Computational Geometry},
  volume={48},
  pages={255--280},
  year={2012},
  publisher={Springer}
}

@article{yang2016generalizations,
  title={Generalizations of joints problem},
  author={Yang, Ben},
  journal={arXiv preprint arXiv:1606.08525},
  year={2016}
}

@article{milnor1964betti,
  title={On the Betti numbers of real varieties},
  author={Milnor, John},
  journal={Proceedings of the American Mathematical Society},
  volume={15},
  number={2},
  pages={275--280},
  year={1964}
}

@article{thom1965homologie,
  title={Sur l’homologie des vari{\'e}t{\'e}s alg{\'e}briques r{\'e}elles},
  author={Thom, Ren{\'e}},
  journal={Differential and combinatorial topology},
  pages={255--265},
  year={1965}
}

@article{petrovskii1949topology,
  title={On the topology of real algebraic surfaces},
  author={Petrovskii, Ivan Georgievich and Oleinik, Olga Arsen'evna},
  journal={Izvestiya Rossiiskoi Akademii Nauk. Seriya Matematicheskaya},
  volume={13},
  number={5},
  pages={389--402},
  year={1949},
  publisher={Russian Academy of Sciences, Steklov Mathematical Institute of Russian~…}
}

@article{blagojevic2017polynomial,
  title={Polynomial partitioning for several sets of varieties},
  author={Blagojevi{\'c}, Pavle VM and Blagojevi{\'c}, Aleksandra S Dimitrijevi{\'c} and Ziegler, G{\"u}nter M},
  journal={Journal of Fixed Point Theory and Applications},
  volume={19},
  pages={1653--1660},
  year={2017},
  publisher={Springer}
}

@inproceedings{guth2015polynomial,
  title={Polynomial partitioning for a set of varieties},
  author={Guth, Larry},
  booktitle={Mathematical Proceedings of the Cambridge Philosophical Society},
  volume={159},
  number={3},
  pages={459--469},
  year={2015},
  organization={Cambridge University Press}
}

@article {osgood,
    AUTHOR = {Osgood, William F.},
     TITLE = {On functions of several complex variables},
   JOURNAL = {Trans. Amer. Math. Soc.},
  FJOURNAL = {Transactions of the American Mathematical Society},
    VOLUME = {17},
      YEAR = {1916},
    NUMBER = {1},
     PAGES = {1--8},
      ISSN = {0002-9947,1088-6850},
       DOI = {10.2307/1988823},
}

@article{gwozdziewicz1999number,
  title={On the number of solutions of an algebraic equation on the curve $y = e^x + \sin x$, $x > 0$, and a consequence for o-minimal structures},
  author={Gwo{\'z}dziewicz, Janusz and Kurdyka, Krzysztof and Parusi{\'n}ski, Adam},
  journal={Proceedings of the American Mathematical Society},
  volume={127},
  number={4},
  pages={1057--1064},
  year={1999}
}

@book{van1998tame,
  title={Tame topology and o-minimal structures},
  author={Van den Dries, Lou},
  volume={248},
  year={1998},
  publisher={Cambridge university press}
}

@article{gabrielov2004complexity,
  title={Complexity of computations with {P}faffian and Noetherian functions},
  author={Gabrielov, Andrei and Vorobjov, Nicolai},
  journal={Normal forms, bifurcations and finiteness problems in differential equations},
  volume={137},
  pages={211--250},
  year={2004},
  publisher={NATO Science Series II}
}

@article{basu2019zeroes,
  title={Zeroes of polynomials on definable hypersurfaces: pathologies exist, but they are rare},
  author={Basu, Saugata and Lerario, Antonio and Natarajan, Abhiram},
  journal={The Quarterly Journal of Mathematics},
  volume={70},
  number={4},
  pages={1397--1409},
  year={2019},
  publisher={Oxford University Press}
}

@book{khovanskiui1991fewnomials,
  title={Fewnomials},
  author={Khovanski{\u\i}, Askold G},
  volume={88},
  year={1991},
  publisher={American Mathematical Soc.}
}

@inproceedings{khovanskii1980class,
  title={A class of systems of transcendental equations},
  author={Khovanski{\u\i}, Askold G},
  booktitle={Doklady Akademii Nauk},
  volume={255},
  number={4},
  pages={804--807},
  year={1980},
  organization={Russian Academy of Sciences}
}

\appendix
\section{Proof of Theorem \ref{thm:poly-part-bbz}}
\label{appendix:poly-part-bbz}
\begin{proof}[Proof of Theorem \ref{thm:poly-part-bbz}]
Recall that $\R[X_1, \ldots, X_n]_{(\le d)}$ has dimension $\binom{d + n}{n} > \frac{d^n}{n!}$. For $\ell \in \N_{> 0}$, define $d_{\ell}:= \left\lceil \left( m\cdot n!\cdot 2^{\ell-1}\right)^{1/n}\right\rceil$, so that

\begin{equation} \label{eqn:delta-l}
m2^{\ell-1} \le \frac{d_{\ell}^n}{n!} < m2^{n + \ell - 1}.
\end{equation} 

Let $s$ be the smallest integer such that $\sum_{\ell=1}^s d_{\ell} \le D < \sum_{\ell=1}^{s+1} d_{\ell}$. Note that we do not exclude $s=0$. In particular,
\begin{equation} \label{eqn:D-delta-l}
D < \sum_{i=1}^{s+1} d_{\ell} < 2(n!)^{\nicefrac{1}{n}}m^{\nicefrac{1}{n}}\frac{2^{\nicefrac{(s+1)}{n}}}{2^{\nicefrac{1}{n}} - 1} \;\;\;\; \Rightarrow \;\;\;\; D^n < \frac{2^{n+1}n!}{\left(2^{\nicefrac{1}{n}} - 1\right)^n}m2^s \;\;\;\; \Rightarrow \;\;\;\; \frac{1}{2^s} < \alpha_1 \frac{m}{D^n},
\end{equation}
where $\alpha_1 = \alpha_1(n)$ is a constant.

By \eqref{eqn:delta-l}, we have 
\[
\mathrm{dim}\, \R[X_1, \ldots, X_n]_{(\le d_{\ell})} > \frac{d_{\ell}^n}{n!}  \ge m 2^{\ell-1}
\]
for every $\ell$. Let $V_\ell$ be an arbitrary subspace of $\R[X_1, \ldots, X_n]_{(\le d_{\ell})}$ of dimension exactly $m2^{\ell-1} + 1$, and let $S(V_\ell)$ denote the unit sphere in $V_\ell$. Define 
\begin{equation*}
    Y := \prod_{\ell=1}^s S(V_\ell) \cong \prod_{\ell=1}^s S^{m2^{\ell-1}}.
\end{equation*}

For any polynomial $P \in \R[X_1, \ldots, X_n]$, let $C_P^{+} \subseteq \R^n$ denote the set of points in $\R^n$ where $P$ is positive, and let $C_P^{-}$ be the set of points where $P$ is negative, so that $\R^n \setminus Z(P) = C_P^{+} \cup C_P^{-}$ and $C_P^{-} \cap C_P^{+} = \emptyset$. Let $(P_1, \ldots, P_s) \in Y$ be a tuple of polynomials and set $P=P_1\cdots P_s$. It is clear from \eqref{eqn:D-delta-l} that $\mathrm{deg} \, P \le \sum_{\ell=1}^s d_{\ell} \le D$. For $\sigma = (\sigma_1, \ldots, \sigma_s) \in \Z_2^{s}$, we define the sign condition
\begin{equation*}
\mathcal{O}_{\sigma}^{(P_1,\dots,P_s)} := \bigcap_{i=1}^s C_{P_i}^{\sigma_i}.
\end{equation*}
This implies $\R^n \setminus Z(P) = \bigcup_{\sigma \in \Z_2^s} \mathcal{O}_{\sigma}^{(P_1,\ldots, P_s)}$ and each $\mathcal{O}_{\sigma}^{(P_1, \ldots, P_s)}$ is a union of some connected components of $\R^n \setminus Z(P)$.

For any $\sigma \in \Z_2^s$ and semi-Pfaffian set $\Pfaff \subseteq \R^n$, define the indicator function $I_{\sigma, \Pfaff}: Y \rightarrow \R$,
\begin{equation*}
I_{\sigma, \Pfaff}(P_1, \ldots, P_s)=\begin{cases}
			1, & \text{if $\mathcal{O}_{\sigma}^{(P_1, \ldots, P_s)} \cap \Pfaff \neq \emptyset$},\\
            0, & \text{if $\mathcal{O}_{\sigma}^{(P_1, \ldots, P_s)} \cap \Pfaff = \emptyset$}
		 \end{cases}.
\end{equation*}
Also, let $U_s \subseteq \R^{2^s}$ be the codimension 1 subspace defined as 
\begin{equation}
U_s = \left\{(y_{\sigma})_{\sigma \in \Z_2^s} \in R^{2^s} \suchthat \sum_{\sigma \in \Z_2^s} y_{\sigma} = 0\right\}.
\end{equation}
Consider the following map $\Phi: Y \rightarrow \bigoplus_{i=1}^m U_s$, which takes 
\begin{equation}
\label{eqn:phi}
(P_1, \ldots, P_s) \mapsto \left(\left(\sum_{\Pfaff \in \Gamma_i} I_{\sigma, \Pfaff}(P_1, \ldots, P_s) - \frac{1}{2^s} \sum_{\beta \in \Z_2^s} \sum_{\Pfaff \in \Gamma_i} I_{\beta, \Pfaff}(P_1, \ldots, P_s)\right)_{\sigma \in \Z_2^s}\right)_{i \in [m]}.
\end{equation} 
The sum $\sum_{\Pfaff \in \Gamma_i} I_{\sigma, \Pfaff}(P_1, \ldots, P_s)$ counts the number of varieties in $\Gamma_i$ that intersect the sign condition $\mathcal{O}_{\sigma}^{(P_1, \ldots, P_s)}$. Now, assume that the map $\Phi$ has a zero and pick $(P_1, \ldots, P_s) \in \Phi^{-1}(0)$. This means that each sign condition in $\{\mathcal{O}_{\sigma}^{(P_1, \ldots, P_s)}\}_{\sigma \in \Z_2^s}$ is intersected by the same number of elements of the set $\Gamma_i$, for all $i \in [m]$. 

For all $i \in [m]$ with $k_i > 0$, by Theorem \ref{thm:bezout-pfaffian}, we have that each $\Pfaff \in \Gamma_i$ intersects at most $CD^{k_i + r_i}$ connected components of $\R^n \setminus Z(P)$, and since each sign condition is a disjoint union of connected components of $\R^n \setminus Z(P)$, we have that each $\Pfaff \in \Gamma_i$ intersects at most $CD^{k_i + r_i}$ sign conditions. Since we have that $(P_1, \ldots, P_s) \in \Phi^{-1}(0)$, we have that, for all $i \in [m]$, each sign condition intersects at most $\frac{1}{2^s}|\Gamma_i|CD^{k_i + r_i} < C'|\Gamma_i|mD^{k_i + r_i - n}$ semi-Pfaffian sets from $\Gamma_i$, where we have used \eqref{eqn:D-delta-l} to obtain the upper bound. This proves Part \ref{pfaffpp-collection:dim-at-least-1}.

On the other hand, for all $i \in [m]$ with $k_i = 0$, each $\Pfaff \in \Gamma_i$ intersects at most $C$ connected components of $\R^n \setminus Z(P)$. In this specific case, we get that each connected component of $\R^n \setminus Z(P)$ intersects at most $C'|\Gamma_i|mD^{-n}$ elements of $\Gamma_i$. This proves Part \ref{pfaffpp-collection:dim-0}.

It remains to prove that $\Phi$ has a zero. Towards this, since the indicator functions $I_{\sigma, \Pfaff}$ are not continuous, we have to consider continuous approximations of the indicators. In \cite{guth2015polynomial} the following Lemma is proved which allows us to approximate $I_{\sigma, \Pfaff}$ by sequences of continuous functions.

\begin{lemma}[Lemma 3.1 in \cite{guth2015polynomial}] \label{lem:indicator-continuous} For every $\eps > 0$, $\Pfaff \subseteq \R^n$, and $\sigma \in \Z_2^s$, there exist functions $I_{\sigma, \Pfaff}^{\eps}: Y \rightarrow \R$ with the following properties.
\begin{enumerate}
\item $I_{\sigma, \Pfaff}^{\eps}$ are continuous.
\item $0 \le I_{\sigma, \Pfaff}^{\eps} \le 1$.
\item If $\mathcal{O}_{\sigma}^{(P_1, \ldots, P_s)} \cap \Pfaff = \emptyset$, then $I_{\sigma, \Pfaff}^{\eps} = 0$.
\item If $\eps_i \rightarrow 0$, $(P_1^i, \ldots, P_s^i) \rightarrow (P_1, \ldots, P_s)$ in $Y$ and $\mathcal{O}_{\sigma}^{(P_1, \ldots, P_s)} \cap \Pfaff \neq \emptyset$, then \[\lim_{i \rightarrow \infty} I_{\sigma, \Pfaff}^{\eps_i}(P_1^i, \ldots, P_s^i) = 1.\] In other words, $I_{\sigma, \Pfaff}(P_1, \ldots, P_s) \le \lim \inf_{i \rightarrow \infty} I_{\sigma, \Pfaff}^{\eps_i}(P_1^i, \ldots, P_s^i)$.
\end{enumerate}
\end{lemma}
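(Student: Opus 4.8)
The plan is to build $I^{\eps}_{\sigma,\Pfaff}$ as a ``soft'' version of the indicator, in the spirit of \cite[Lemma 3.1]{guth2015polynomial}: rather than recording merely whether $\Pfaff$ meets the open set $\mathcal{O}^{(P_1,\dots,P_s)}_{\sigma}$, we record, continuously, whether $\Pfaff$ contains a point at which \emph{all} the sign conditions $\sigma_i P_i > 0$ are met with a margin of size roughly $\eps$. To this end, for $P=(P_1,\dots,P_s)\in Y$ set
\[
f_P(x) := \min_{1\le i\le s}\ \sigma_i\, P_i(x), \qquad x\in\R^n,
\]
so that $x\in\mathcal{O}^{(P_1,\dots,P_s)}_{\sigma}$ exactly when $f_P(x)>0$. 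Since each $V_\ell$ is a fixed finite-dimensional space of polynomials and evaluation of a polynomial is jointly continuous in its coefficients and in $x$, the map $(P,x)\mapsto f_P(x)$ is continuous on $Y\times\R^n$. Fix a continuous ramp $\chi_{\eps}\colon\R\to[0,1]$ with $\chi_{\eps}(t)=0$ for $t\le 0$ and $\chi_{\eps}(t)=1$ for $t\ge\eps$ (for instance $\chi_{\eps}(t)=\min\{\max\{t/\eps,0\},1\}$), together with a continuous weight $\beta_{\eps}\colon\R^n\to(0,1]$ that decays at spatial infinity for each fixed $\eps$ but tends to $1$ pointwise as $\eps\to 0$ (for instance $\beta_{\eps}(x)=e^{-\eps|x|^2}$). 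Then define
\[
I^{\eps}_{\sigma,\Pfaff}(P_1,\dots,P_s) := \sup_{x\in\Pfaff}\ \chi_{\eps}\big(f_P(x)\big)\,\beta_{\eps}(x),
\]
with the convention $\sup\emptyset=0$.

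With this definition, properties (2) and (3) are immediate: the quantity inside the supremum lies in $[0,1]$, and if $\mathcal{O}^{(P_1,\dots,P_s)}_{\sigma}\cap\Pfaff=\emptyset$ then $f_P(x)\le 0$ for every $x\in\Pfaff$, whence $\chi_{\eps}(f_P(x))=0$ and the supremum is $0$. For (4), pick $x_0\in\mathcal{O}^{(P_1,\dots,P_s)}_{\sigma}\cap\Pfaff$ with $f_P(x_0)=\delta>0$; by joint continuity $f_{P^i}(x_0)\to\delta$, so for all large $i$ one has $f_{P^i}(x_0)\ge\delta/2>\eps_i$, hence $\chi_{\eps_i}(f_{P^i}(x_0))=1$ and therefore $I^{\eps_i}_{\sigma,\Pfaff}(P^i)\ge\beta_{\eps_i}(x_0)\to 1$; combined with (2) this forces $I^{\eps_i}_{\sigma,\Pfaff}(P^i)\to 1$, and since $I_{\sigma,\Pfaff}$ is $\{0,1\}$-valued the displayed $\liminf$ inequality follows as well.

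The only step that requires care is (1), continuity of $I^{\eps}_{\sigma,\Pfaff}$ on $Y$ for fixed $\eps$. A pointwise supremum of continuous functions is automatically lower semicontinuous, so it suffices to prove upper semicontinuity, and this is where the decay of $\beta_{\eps}$ is used. Since $\chi_{\eps}\le 1$ and $Y$ is compact, for every $\eta>0$ there is $R=R(\eps,\eta)$ such that $\chi_{\eps}(f_P(x))\beta_{\eps}(x)<\eta$ whenever $|x|>R$, uniformly over $P\in Y$. Writing $K:=\overline{\Pfaff\cap\overline{B}_R}$, a fixed compact subset of $\R^n$, and using that the supremum of a continuous function over a set equals its supremum over the closure, one obtains $\big|I^{\eps}_{\sigma,\Pfaff}(P)-\max_{x\in K}\chi_{\eps}(f_P(x))\beta_{\eps}(x)\big|\le\eta$ for all $P\in Y$; the maximum here is continuous in $P$ because $(P,x)\mapsto\chi_{\eps}(f_P(x))\beta_{\eps}(x)$ is jointly continuous and $K$ is compact. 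Letting $\eta\to 0$ exhibits $I^{\eps}_{\sigma,\Pfaff}$ as a uniform limit of continuous functions, hence continuous.

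I expect the main obstacle to be exactly this reconciliation: non-compactness of $\Pfaff$ forces \emph{some} spatial localization to secure continuity, while property (4) forbids any \emph{fixed} cutoff, since the witnessing point $x_0$ may lie arbitrarily far from the origin. Letting the weight $\beta_{\eps}$ decay at infinity for each fixed $\eps$ yet converge to $1$ pointwise as $\eps\to 0$ is precisely the device that resolves this tension; when $\Pfaff$ happens to be bounded one may simply take $\beta_{\eps}\equiv 1$, and the whole argument reduces to a maximum of a jointly continuous function over the fixed compact set $\overline{\Pfaff}$.
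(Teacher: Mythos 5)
Your proof is correct, and it fills a gap that the paper itself leaves open: the paper does not prove this lemma but simply cites \cite[Lemma 3.1]{guth2015polynomial}, so there is no in-paper argument to compare against. Your construction — taking $I^{\eps}_{\sigma,\Pfaff}(P)=\sup_{x\in\Pfaff}\chi_{\eps}(\min_i \sigma_i P_i(x))\,\beta_{\eps}(x)$ with a ramp $\chi_{\eps}$ and a spatial weight $\beta_{\eps}(x)=e^{-\eps|x|^2}$ — is in the same spirit as Guth's soft-indicator idea, but you make the treatment of the unboundedness of $\Pfaff$ explicit and self-contained. The weight is exactly what is needed: without it, upper semicontinuity genuinely fails when witnessing points for nearby tuples $P^i$ escape to infinity (for example, on $\Pfaff=\R$ with $s=1$, $\sigma=+$, $P^i\propto -1 + x/i$ normalized to $S(V_1)$, the unweighted supremum equals $1$ for all $i$ while the limit has value $0$); your $\beta_{\eps}$ damps such escaping maximizers, and because it tends to $1$ pointwise as $\eps\to 0$ it does not obstruct property (4). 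The reduction of continuity to a uniform approximation by $P\mapsto\max_{x\in K}\chi_{\eps}(f_P(x))\beta_{\eps}(x)$ over compact $K$, via the tail bound $\beta_{\eps}(x)<\eta$ for $|x|>R(\eps,\eta)$, is valid, and the remaining checks (2)–(4) are routine as you present them. In short: a correct, slightly more explicit reconstruction of the cited argument, and a reasonable one to include if the authors wished the appendix to be fully self-contained.
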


For some sequence $\eps_i \rightarrow 0$, let $\Phi_i: Y \rightarrow U_s^{\oplus j}$ be defined as \[
(P_1, \ldots, P_s) \mapsto \left(\left(\sum_{\Pfaff \in \Gamma_i} I^{\eps_i}_{\sigma, \Pfaff}(P_1, \ldots, P_s) - \frac{1}{2^s} \sum_{\beta \in \Z_2^s} \sum_{\Pfaff \in \Gamma_i} I^{\eps_i}_{\beta, \Pfaff}(P_1, \ldots, P_s)\right)_{\sigma \in \Z_2^s}\right)_{i \in [m]}.
\]

Consider the following action of $\Z_2^s = \langle\omega_1, \ldots, \omega_s\rangle$ on $Y$: for all $1 \le l \le s$, given $(P_1, \ldots, P_s) \in Y$, the generator $\omega_l$ acts as
\begin{equation}
\label{eqn:z2-sign-flip}
\omega_l \circ (P_1, \ldots, P_s) = (P_1, \ldots, -P_l, \ldots, P_s).
\end{equation}
Also, consider the following action of $\Z^s_2$ on $\R^{2^s}$: an element $(\beta_1, \ldots, \beta_s) \in \Z_2^s$ acts on $(y_{\alpha})_{\alpha \in \Z_2^s}$ by acting on the indices as
\begin{equation}
\label{eqn:z2-indices-permute}
(\beta_1, \ldots, \beta_s) \circ (\alpha_1, \ldots, \alpha_s) = (\beta_1 + \alpha_1, \ldots, \alpha_s + \beta_s),
\end{equation}
where the addition is in $\Z_2$. Thus the vector subspace $U_s$ is a $\Z_2^s$-subrepresentation of $\R^{2^s}$.

The functions $\Phi_i$ are continuous and are $\Z_2^s$-equivariant with respect to the above actions (\eqref{eqn:z2-sign-flip} and \eqref{eqn:z2-indices-permute}), assuming the diagonal action on the direct sum $U_s^{\oplus j}$. By the use of the $\Z_2^s$-equivariance of $\Phi_i$ and some tools from equivariant cohomology (Fadell-Husseini index theory \cite{fadellhusseini1988}), it is proved in \cite[Section 2.7]{blagojevic2017polynomial} that each $\Phi_i$ indeed has a zero.

Since each $\Phi_i$ has a zero, this means that for each $i$, there exists a $\rho_i \in Y$ such that $\Phi_i(\rho_i) = 0$. Because $Y$ is compact, this means that there is a converging subsequence \[
\lim_{i \rightarrow \infty} \rho_{\kappa(i)} = \rho \in Y,
\] 
where $\kappa: \N \rightarrow \N$ is a strictly increasing function. Since by Lemma \ref{lem:indicator-continuous} we have that the indicator functions $I_{\sigma, \Pfaff}$ are limits of sequences of continuous maps, we have that
\[
\Phi(\rho) = \lim_{i \rightarrow \infty} \Phi_{\kappa(i)}(\rho_{\kappa(i)}) = 0.
\]
This proves that $\Phi$ has a zero, and completes the proof.
\end{proof}

\end{document}